\documentclass[11pt, oneside]{amsart}

\usepackage{pinlabel}

\usepackage{amsmath, amsfonts, amssymb}
\usepackage{amsopn}
\usepackage{enumerate}
\usepackage{ifthen, color, hyperref}
\usepackage[cmtip,arrow]{xy}

\usepackage{enumitem}

\usepackage[T1]{fontenc}

%\usepackage{stmaryrd}
%for $\boxbslash$ and $\boxslash$

%\dgARROWLENGTH=1em

%\usepackage{graphicx}
%\newcommand{\extensionmode}{eps}
%\renewcommand{\extensionmode}{jpg} %FOR pdflatex

\newcommand{\showcomments}{yes}

\newsavebox{\commentbox}
\newenvironment{com}%
% begin comment
{\ifthenelse{\equal{\showcomments}{yes}}%
% then begin comment in margin
{\footnotemark
        \begin{lrbox}{\commentbox}
        \begin{minipage}[t]{1.in}\raggedright\sffamily\tiny
        \footnotemark[\arabic{footnote}]}
% else eat contents of the environment
{\begin{lrbox}{\commentbox}}}%
% end comment
{\ifthenelse{\equal{\showcomments}{yes}}%
% then end comment
{\end{minipage}\end{lrbox}\marginpar{\usebox{\commentbox}}}
% else finish eating
{\end{lrbox}}}

\newcounter{ax}
\setcounter{ax}{-1}
\newtheorem{thm}{Theorem}[section]

\newtheorem{lem}[thm]{Lemma}
\newtheorem{prop}[thm]{Proposition}

\newtheorem{cor}[thm]{Corollary}

\theoremstyle{definition}
\newtheorem{defn}[thm]{Definition}
\newtheorem{rem}[thm]{Remark}

\newtheorem{claim}{Claim}

\newtheorem{claim*}{Claim}

\makeatletter

\newcommand{\Rmnum}[1]{\mathbf{{\expandafter\@slowromancap\romannumeral #1@}}}

 \makeatother

\setlength{\marginparwidth}{1.2in}
\let\oldmarginpar\marginpar
\renewcommand\marginpar[1]{\-\oldmarginpar[\raggedleft\footnotesize #1]%
{\raggedright\footnotesize #1}}

\newcommand\ds{\displaystyle}

%{\hat{d}}
%{\hat{d}}

\DeclareMathOperator{\supp}{\mathrm{supp}}

%For ``continuing enumerations'' beyond intervening text
\newcounter{enumitemp}

\def\G{{\mathcal G}}
\newcommand{\CFS}{\ensuremath{\mathcal{CFS}}}

\newcommand{\E}{\mathbb E}
\newcommand{\Pb}{\mathbb P}

%%%%%%%%%%%%%DIMENSIONS%%%%%%%%%%%%%
\setlength{\textwidth}{6.25in} \setlength{\textheight}{9in}
\hoffset=-0.75 in \voffset=-0.5 in
%%%%%%%%%%%%%DIMENSIONS%%%%%%%%%%%%%

\newcommand{\edge}[2]{{#1}{#2}}

\newcommand{\pair}[2]{{#1}{#2}}
\newcommand{\pairgraph}[1]{\square\left({#1}\right)}

\newcommand{\noninduced}{\boxtimes(\Gamma)}

\definecolor{figblue}{cmyk}{1,0,0,0}
\definecolor{figred}{cmyk}{0,1,1,0}
\definecolor{figgreen}{cmyk}{.75,0,1,0}
\definecolor{figblue2}{cmyk}{1,1,0,0}
\definecolor{figgreen2}{cmyk}{1,0,1,0}

\title[Square percolation and the threshold for quadratic 
divergence in random RACG]{Square percolation and the threshold for quadratic 
divergence in random right-angled Coxeter groups}
\author[J. Behrstock]{Jason Behrstock}
\address{Lehman College and The Graduate Center, CUNY, New York, New York, USA}
\email{jason.behrstock@lehman.cuny.edu}

\author[V. Falgas-Ravry]{Victor Falgas-Ravry}
\address{Ume{\aa} Universitet, Ume{\aa}, Sweden}
\email{victor.falgas-ravry@umu.se}

\author[T. Susse]{Tim Susse}
\address{Bard College at Simon's Rock, Great Barrington, Mass., USA}
\email{tsusse@simons-rock.edu}

\subjclass[2010]{05C80, 20F65, 57M15, 60B99, 20F55, 20F69}

\date{\today}

\numberwithin{thm}{section}
\numberwithin{equation}{section}
\begin{document}
		\begin{abstract}
		Given a graph $\Gamma$, its auxiliary \emph{square-graph}
		$\square(\Gamma)$ is the graph whose vertices are the
		non-edges of $\Gamma$ and whose edges are the pairs of
		non-edges which induce a square (i.e., a $4$-cycle) in
		$\Gamma$.  We determine the threshold edge-probability
		$p=p_c(n)$ at which the Erd{\H o}s--R\'enyi random graph
		$\Gamma=\Gamma_{n,p}$ begins to asymptotically almost surely
		have a square-graph with a connected component whose squares
		together cover all the vertices of $\Gamma_{n,p}$.  We show
		$p_c(n)=\sqrt{\sqrt{6}-2}/\sqrt{n}$, a polylogarithmic
		improvement on earlier bounds on $p_c(n)$ due to Hagen and the
		authors.  As a corollary, we determine the threshold
		$p=p_c(n)$ at which the random right-angled Coxeter group
		$W_{\Gamma_{n,p}}$ asymptotically almost surely becomes
		strongly algebraically thick of order $1$ and has quadratic
		divergence.
	\end{abstract}

	\maketitle

\section{Introduction}
In this paper we investigate the phase transition for a variant of
``square percolation'', with motivation coming from both previous work
on clique percolation and from questions in geometric group theory.

Clique percolation was introduced by Der\'enyi, Palla and
Vicsek \cite{DerenyiPallaVicsek:CliquePercolation} as a simple model
for community detection, and quickly became well-studied in network
science, from computational, empirical, and theoretical perspectives, see e.g.~\cite{BollobasRiordan:cliquepercolation,
	DerenyiPallaVicsek:CliquePercolation,
	LiDengWang:cliqueperc, DerenyiPallaVicsek:CommunityStructure,
TothVicsekPalla:overlappingmodularitycliqueperc,WangCaoSuzukiAihara:Epidemicscliqueperc}.
In $(k, \ell)$--clique percolation, to investigate the ``community 
structure'' of a graph or network
$\Gamma$ one studies the auxiliary $(k, \ell)$-clique graph whose vertices are the
$k$--cliques of $\Gamma$ and whose edges are those pairs of
$k$--cliques having at least $\ell$ vertices in common.

One of the main research questions in the area was determining the
threshold $p=p(n)$ for the emergence of a ``giant component'' in the
auxiliary $(k, \ell)$-clique graph when the original graph $\Gamma\in
\mathcal{G}(n,p)$ is an Erd{\H o}s--R\'enyi random graph on $n$
vertices with edge-probability $p$.  This was completely resolved in
2009 by Bollob\'as and
Riordan~\cite{BollobasRiordan:cliquepercolation}, in a highly
impressive paper making sophisticated use of branching processes.  In
the concluding remarks of their paper, Bollob\'as and Riordan
suggested a study of ``square percolation'' as a natural extension of
their work.  More precisely, given a graph $\Gamma$ they suggested
studying the component structure of the auxiliary graph whose vertices
are the \emph{not necessarily induced} $4$-cycles in $\Gamma$, and
whose edges are pairs of $4$--cycles with a diagonal\footnote{Note
that given a $4$--cycle in a graph, we use the term ``diagonal'' to
refer to the pair of vertices of a diagonal, even though they they may
not span an edge in the graph; indeed most of this paper concerns
induced $4$--cycles so that the edge spanned by the diagonal is
\emph{not} in the graph.} in common.  For 
$\Gamma\in \mathcal{G}(n,p)$, they stated that they believed the
threshold for the associated 
auxiliary graph to contain a giant component 
containing a positive
proportion of all squares of $\Gamma$ should be $\lambda_c/\sqrt{n}$,
where $\lambda_c=\sqrt{\sqrt{6}-2}$ (see the discussion around
equation (19) in Section~2.3
of~\cite{BollobasRiordan:cliquepercolation}).

A related (but slightly different) notion of ``square percolation'' arose
independently in joint work of the authors with Hagen
\cite{BehrstockFalgasRavryHagenSusse:StructureRandom} on the divergence of
the random right-angled Coxeter group, providing motivation from
geometric group theory for understanding the phase transition in an auxiliary graph formed from the \emph{induced} $4$-cycles of an Erd{\H o}s--R\'enyi random graph $\Gamma\in \mathcal{G}(n,p)$. To make this more precise, we make the following definition.
\begin{defn}\label{def: square graph}
To any graph $\Gamma$, we associate an auxiliary
\emph{square-graph}, $\pairgraph{\Gamma}$, whose vertices are the non-edges of $\Gamma$, 
and whose edges are the pairs of non-edges of $\Gamma$ that together 
induce a $4$--cycle (a.k.a.\ \emph{square}) in $\Gamma$.	
\end{defn}
Thus for vertices $a,b,c,d$ in a graph $\Gamma$, the pair $\{ac,
bd\}$ is an edge of $\square (\Gamma)$ if and only if (i) $ac$ and
$bd$ are non-edges of $\Gamma$ (and thus vertices of
$\square(\Gamma)$), and (ii) $ab$, $bc$, $cd$ and $da$ are all edges
of $\Gamma$.  
\begin{rem} This definition of the auxiliary
	square-graph $\square(\Gamma)$ differs slightly from
	the one used in the related papers
	\cite{BehrstockFalgasRavryHagenSusse:StructureRandom,
		DaniThomas:divcox}. In those papers, the auxiliary graph had the
	induced $4$-cycles as its vertices, and its edges were those pairs of
	induced $4$-cycles having a diagonal in common.  These two variants of auxiliary square-graphs encode essentially the same information, but the formulation above is more natural from a
	combinatorial perspective and more convenient for the exploration
	processes we shall consider in this paper.
\end{rem}
We investigate the component structure of $\square(\Gamma)$, albeit 
with an unusual twist. With a view to applications in geometric group 
theory, we will be interested in the question of whether or not 
$\square(\Gamma)$ has a component that ``covers'' all of the vertex-set of the original graph $\Gamma$.
\begin{defn}\label{def: support of a square component}
	We refer to connected components of $\pairgraph{\Gamma}$ as
	\emph{square-components} of $\Gamma$.  Given a square-component $C$ we define
	its \emph{support} to be the collection of vertices of $\Gamma$ given 
	by: 
	\[\supp(C)=\bigcup_{vw\in C}\{v,w\},\]and 
	say that the component $C$ of $\pairgraph{\Gamma}$ 
	\emph{covers} the vertex set $\supp(C)\subseteq V(\Gamma)$. If $C$ covers all of $V(\Gamma)$, we say it is a square-component with \emph{full support}
\end{defn} 
Write $\Gamma\in \mathcal{G}(n,p)$ to denote that $\Gamma$ is an instance
of the Erd{\H o}s--R\'enyi random graph model with parameters $n$ and
$p$, i.e., that $\Gamma$ is a graph on $n$ vertices obtained by including each edge at
random with probability $p$, independently of all the others.

Our main combinatorial result in this paper is pinpointing the precise
threshold $p_c(n)$ at which $\Gamma\in \mathcal{G}(n,p)$
asymptotically almost surely\footnote{As usual, \emph{asymptotically
almost surely} or \emph{a.a.s.} is shorthand for ``with probability
tending to $1$ as $n\rightarrow \infty$.''} experiences a phase
transition from having only square-components with support of
logarithmic order to having a square-component with full support.
Throughout this paper we set $\lambda_c=\sqrt{\sqrt{6}-2}$.  The
following two results establish that the critical threshold 
probability is 
$p(n)=\lambda_cn^{-1/2}$ by showing highly disparate behavior 
on either side of this threshold as given by the following two 
contrasting results.

\begin{thm}[Subcritical Behavior] \label{thm:no_giant} 
	Let $\lambda<\lambda_c$ be fixed. Suppose that $p(n)\le \lambda n^{-1/2}$.
	Then for $\Gamma\in\G(n,p)$, a.a.s.\ 
every square-component of $\Gamma$ covers at most $O((\log n)^{2^{32}})$ vertices.
\end{thm}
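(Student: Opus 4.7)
The plan is to bound the size of every square-component of $\Gamma$ by running a breadth-first exploration on $\pairgraph{\Gamma}$ and dominating it by a subcritical branching process. Fix a non-edge $e_0 = \{a,c\}$ of $\Gamma$ (a vertex of $\pairgraph{\Gamma}$) and explore its square-component layer by layer: at each step, for every non-edge $\{b,d\}$ currently in the boundary, we reveal the common neighbors of $b$ and $d$ in $\Gamma$ and add each newly discovered non-edge $\{b',d'\}$ such that $\{b,d,b',d'\}$ spans a square to the next generation. Since each non-edge contributes exactly two vertices to the support of its square-component, a polylogarithmic upper bound on the number of non-edges visited by the exploration immediately implies the desired bound on $\lvert \supp(C) \rvert$.

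The heart of the argument is a subcritical branching-process comparison. A direct first-moment computation gives an expected degree in $\pairgraph{\Gamma}$ of $\binom{n-2}{2} p^4 (1-p) \sim \lambda^4/2$, which is already less than $1$ at $\lambda = \lambda_c$. However, since edges of $\pairgraph{\Gamma}$ are highly correlated (two squares sharing an edge of $\Gamma$, rather than just a diagonal, are much more likely to coexist than independent squares would be), the threshold for a ``macroscopic'' square-component sits strictly below expected-degree $1$. The precise criticality $\lambda_c^4 = \sqrt{6}-2$, equivalently $\lambda_c^8 + 4\lambda_c^4 = 2$, emerges as the value at which an appropriate two-step transition operator on non-edges has Perron eigenvalue exactly $1$; this matches the heuristic recorded by Bollob\'as and Riordan in their discussion of (induced) square percolation.

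Concretely, I would carry out the following steps. \textbf{(1)} Formalize the BFS exploration so that at each step the freshly revealed edges of $\Gamma$ are independent of the edges exposed in earlier steps; in the usual way, one conditions on the previously exposed edges and treats the remaining edge indicators as independent Bernoulli$(p)$ variables on the unexposed vertex pairs. \textbf{(2)} Stochastically dominate the number of non-edges visited by the exploration by the total progeny of a multi-type Galton--Watson process, whose types encode a small amount of local geometry at the current frontier and whose mean matrix has Perron eigenvalue $\rho < 1$ for every $\lambda < \lambda_c$. \textbf{(3)} Apply sharp tail bounds for subcritical multi-type branching processes to conclude that, for any fixed $e_0$, the exploration visits more than $(\log n)^{C}$ non-edges with probability at most $n^{-\omega(1)}$. \textbf{(4)} Take a union bound over the $\binom{n}{2}$ possible starting non-edges.

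The main obstacle is \textbf{(2)}: one must build the stochastic domination while honestly accounting for the correlations introduced when squares discovered at different generations of the exploration share an edge or vertex of $\Gamma$. The standard remedy is to track not only the non-edges already visited but also the edges of $\Gamma$ already exposed, and to run the exploration on a carefully chosen residual random graph, thereby establishing a coupling with a Galton--Watson process living on an enlarged type space. The algebraic equation characterizing the Perron eigenvalue of the resulting mean matrix then admits $\lambda_c$ as its critical point. The astronomical exponent $2^{32}$ reflects the fact that the proof makes no attempt to optimize the polylogarithmic degree: the argument most likely iterates a moment-generating-function or second-moment estimate a bounded number of times, with the exponent doubling at each iteration, which is entirely sufficient for the geometric-group-theory applications that motivate the statement.
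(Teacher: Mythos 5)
Your overall architecture --- explore the square-component of a fixed pair, dominate the exploration by a subcritical branching process, bound its total progeny, and union-bound over the $\binom{n}{2}$ starting pairs --- is the same as the paper's. But two things are off. First, a concrete error in the criticality computation: the relevant offspring count when you reach a non-edge $ac$ through a square $abcd$ is $\binom{Z+2}{2}-1$ with $Z\sim\mathrm{Binom}(n,p^2)$ (the ``$+2$'' being the already-known common neighbours $b,d$), whose mean is $\tfrac{1}{2}\lambda^4+2\lambda^2$; setting this equal to $1$ gives $\lambda_c^2=\sqrt{6}-2$, i.e.\ $\lambda_c^4+4\lambda_c^2=2$, not $\lambda_c^4=\sqrt 6-2$ or $\lambda_c^8+4\lambda_c^4=2$ as you wrote. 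The mechanism is ordinary size-biasing of a single-type process, not the Perron eigenvalue of a two-step operator; the paper uses a single-type Galton--Watson process throughout.

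Second, and more seriously, your step \textbf{(2)} does not work as described, and this is precisely where the real difficulty of the theorem lives. If the exploration ever reaches a non-edge having \emph{three} common neighbours among already-discovered vertices, its offspring count becomes $\binom{Z+3}{2}-3=\binom{Z+2}{2}-1+Z$, with mean $1+\lambda_c^2>1$ at criticality; so any honest ``enlarged type space'' must contain types whose mean offspring exceeds one, and off-the-shelf tail bounds for subcritical multi-type processes do not apply. The paper's resolution is combinatorial rather than spectral: it shows that with probability $1-O((\log n)^{O(1)}n^{-5/2})$ at most five such ``exceptional'' edges ever appear, and analyses the exploration as a depth-at-most-five tree of independent single-type subcritical processes. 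This also shows your claimed per-pair failure probability $n^{-\omega(1)}$ is unattainable --- a single exceptional edge occurs with probability $\Theta(\mathrm{polylog}(n)\cdot n^{-1/2})$, so the bad-event probability is only polynomially small, and one needs the exponent $5/2>2$ to survive the union bound. Two further points you gloss over: the offspring variable $X=(Z^2+3Z)/2$ is heavy-tailed (quadratic in a binomial), so the total-progeny bound requires a truncation argument combined with Dwass's formula rather than a standard exponential tail estimate; and since component support in the \emph{induced} square graph is not monotone in $p$, the paper passes to the non-induced auxiliary graph $\noninduced(\Gamma)$ so that it suffices to treat $p=\lambda n^{-1/2}$ exactly.
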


\begin{thm}%[Existence and uniqueness of a giant square-component]
	[Supercritical Behavior] \label{thm:1giant}
Let $\lambda>\lambda_c$ be fixed, and let $f\colon
\mathbb{N}\rightarrow \mathbb{R}^+$ be a function with
$f(n)\rightarrow 0$ and $f(n)n^2\rightarrow \infty$ as $n\rightarrow
\infty$.  Let $p=p(n)$ be an edge-probability with
	\[	\lambda n^{-1/2} \leq  p(n) \leq 1-f(n).\]
	Then for $\Gamma\in\G(n,p)$, a.a.s.\ there is a square-component of
	$\Gamma$
	covering \emph{all} vertices of $\Gamma$. 		
\end{thm}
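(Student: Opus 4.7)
The plan is to combine a supercritical exploration/branching-process argument with a two-round sprinkling step. The former produces (in the sparse regime $p=\lambda n^{-1/2}$) a square-component of $\Gamma$ whose support covers $(1-o(1))$ of the vertex set, and the latter absorbs the remaining uncovered vertices into that component using independent fresh edges. By monotonicity of the event ``$\square(\Gamma)$ has a full-support component'' under edge-addition, it suffices to establish the claim at the left endpoint $p=\lambda n^{-1/2}$ (for any fixed $\lambda>\lambda_c$) and to argue separately, and more directly, in the dense regime where $p$ approaches $1-f(n)$.

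For the first stage, fix $\lambda_1\in(\lambda_c,\lambda)$ and write $p=p_1+p_2-p_1p_2$ with $p_1=\lambda_1 n^{-1/2}$, so that $\Gamma=\Gamma_1\cup\Gamma_2$ with $\Gamma_i\in\G(n,p_i)$ independent. I would run an exploration of $\square(\Gamma_1)$ starting from a uniformly chosen non-edge $\{a,c\}$, revealing at each step the edges and non-edges needed to identify the square-neighbours of the current boundary non-edge, while tracking auxiliary data (the ``middle'' vertices $b$ completing each discovered $4$-cycle $\{a,b,c,d\}$). This exploration is dominated from below by a multi-type Galton--Watson process whose Perron--Frobenius eigenvalue crosses $1$ precisely at $\lambda=\lambda_c=\sqrt{\sqrt{6}-2}$, matching the prediction of Bollob\'as--Riordan (equation~(19) of~\cite{BollobasRiordan:cliquepercolation}). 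For $\lambda_1>\lambda_c$ the process is supercritical with positive survival probability $\sigma=\sigma(\lambda_1)>0$, which translates by the usual exploration--coupling argument to: a uniformly chosen vertex lies in $\supp(C^\ast)$ with probability $\sigma+o(1)$, where $C^\ast$ is the (unique) giant square-component. A second-moment argument together with a sprinkling step using part of $\Gamma_2$ then upgrades this to $|\supp(C^\ast)|\ge (1-\varepsilon)n$ a.a.s., with $\varepsilon=\varepsilon(\lambda_1)\to 0$ as we take $\lambda_1$ close enough to $\lambda$.

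For the second stage, let $U=V(\Gamma)\setminus\supp(C^\ast)$, so $|U|\le\varepsilon n$ by Stage~1. Since $|\supp(C^\ast)|=(1-o(1))n$, the number of non-edges of $C^\ast$ with both endpoints in $\supp(C^\ast)$ is of order $(1-p)n^2$, which tends to infinity by the hypothesis $f(n)n^2\to\infty$ on the upper bound for $p$. For each $v\in U$, I would use the unrevealed edges of $\Gamma_2$ incident to $v$ to produce a fresh induced square $\{v,b,c,d\}$ of $\Gamma$ in which one diagonal $\{b,d\}$ already lies in $C^\ast$; the expected number of such absorbing squares is of order $(1-p)\cdot n^2\cdot p_2^2$, and a Chernoff/Janson estimate shows the failure probability for a given $v$ is $\exp(-\omega(\log n))$. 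A union bound over $v\in U$ then guarantees that every vertex is absorbed into $C^\ast$, yielding a square-component with full support. In the genuinely dense regime, where $p$ is bounded away from $n^{-1/2}$, the Stage~1 analysis is unnecessary: essentially every pair of non-edges is joined in $\square(\Gamma)$, and one only needs to verify via the $f(n)n^2\to\infty$ hypothesis and the associated concentration estimates that every vertex is incident to a non-edge.

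The main obstacle is the rigorous implementation of Stage~1: the overlapping $4$-cycles create strong local correlations between neighbouring non-edges in $\square(\Gamma_1)$, so a naive single-type branching-process comparison gives the wrong threshold. One must set up the multi-type comparison carefully, track the middle vertices so that the ``so far unused'' part of the graph remains independent of the history, and then verify by an explicit eigenvalue computation that the critical value is exactly $\lambda_c=\sqrt{\sqrt{6}-2}$. A secondary technical point is coupling the Stage~1 ``almost-full'' support upgrade with the Stage~2 absorption in a way that holds uniformly across the full range $\lambda n^{-1/2}\le p\le 1-f(n)$; this is routine once the two stages are decoupled via the independent splitting $\Gamma=\Gamma_1\cup\Gamma_2$, but bookkeeping the two rounds cleanly is the most delicate part.
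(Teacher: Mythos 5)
Your proposal founders on its very first structural move: the event that $\square(\Gamma)$ has a full-support component is \emph{not} monotone under edge-addition, because the squares must be induced. A minimal counterexample: if $\Gamma$ is a single induced $4$-cycle $abcd$, then $\square(\Gamma)$ has the full-support component $\{ac,bd\}$; adding the edge $ac$ destroys the induced square and leaves $\square(\Gamma)$ with the single isolated vertex $bd$, whose support misses $a$ and $c$. So the reduction to the left endpoint $p=\lambda n^{-1/2}$ is invalid, and, more seriously, the edge-sprinkling decomposition $\Gamma=\Gamma_1\cup\Gamma_2$ does not do what you need: the edges of $\Gamma_2$ land on the diagonals of induced squares of $\Gamma_1$, deleting on the order of $n^2p_2=\Omega(n^{3/2})$ vertices of $\square(\Gamma_1)$ and potentially shattering the giant component you built in Stage~1. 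Nothing in your argument controls this, and the paper identifies exactly this non-monotonicity as the reason the Bollob\'as--Riordan edge-sprinkling framework cannot be reused here.

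The actual proof sidesteps this by sprinkling \emph{vertices} rather than edges: it reserves a set of $\varepsilon_1 n$ vertices, partitions them into $2\log_2 n$ batches, and reveals each batch in turn; a Janson-inequality ``connecting lemma'' shows that each round merges every ``large but not very large'' component with another via connecting triples $(x_1x_2,\,y_1y_2,\,z_1z_2)$ whose bridging non-edge $z_1z_2$ lives entirely inside the freshly revealed vertex set, so nothing previously built can be destroyed. The full-support step is then run against a partition of $[n]$ into $O(1)$ blocks to dodge conditioning and uniqueness issues. Two further notes: your Stage~1 threshold computation is essentially the paper's (its single-type process, with offspring $\binom{Z+2}{2}-1$ and the two ``middle'' vertices folded into the count, already yields $\lambda_c$ exactly, so the claim that a single-type comparison gives the wrong threshold is mistaken); and the dense regime $p\geq 5\sqrt{\log n/n}$ is indeed disposed of by citation to prior work, as you suggest. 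But without replacing the monotonicity claim and the edge-sprinkling absorption by an argument that respects induced squares, the proof as proposed does not go through.
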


Our proofs of Theorems~\ref{thm:no_giant}
and~\ref{thm:1giant} confirm the conjecture of Bollob\'as and Riordan
regarding the location of the phase transition for their version of
(non-induced) square percolation.  Further,
Theorems~\ref{thm:no_giant} and~\ref{thm:1giant} have a direct
application to the study of the geometric properties of random
right-angled Coxeter group, which we now describe.

Given a graph $\Gamma$, we define the associated \emph{right-angled
Coxeter group} (RACG) $W_{\Gamma}$ by taking the free group on
$V(\Gamma)$ and adding the relations $a^2=1$ and $ab=ba$ for all $a\in
V(\Gamma)$, $ab\in E(\Gamma)$.  In this way, the graph $\Gamma$
encodes a \emph{finite presentation} for the right-angled Coxeter
group $W_\Gamma$.  Given graphs $\Gamma$ and $\Lambda$, it is 
well-known that the associated
groups $W_\Gamma$ and $W_\Lambda$ are isomorphic if and only if the
graphs $\Gamma$ and $\Lambda$ are isomorphic, see
\cite{Muhlherr:aut_cox}.
Thus algebraic and geometric properties of $W_{\Gamma}$ can be studied
via purely graph-theoretic means, as we do in this paper.  Indeed, a
number of geometric properties of a right-angled Coxeter
group $W_{\Gamma}$ admit encodings as graph-theoretic properties of
the presentation graph $\Gamma$.  Such properties include thickness and having quadratic
divergence, which are both important in geometric group
theory (see Section~\ref{sec:background} below for a formal
definition of these notions). An investigation of  
right-angled Coxeter groups with 
quadratic divergence was the main motivation for
the work undertaken in this paper.

The correspondence between right-angled Coxeter groups and graphs
allows one to define models of
random groups based on random graph models. In particular, in this 
paper we consider 
the \emph{random right-angled Coxeter group},
$W_{\Gamma}$ where the presentation graph $\Gamma\in \mathcal{G}_{n,p}$ is an Erd{\H o}s--R\'enyi random graph. 
Using Theorems~\ref{thm:no_giant} and \ref{thm:1giant} above on square-components in Erd{\H
o}s--R\'enyi random graphs, we prove the following. 
	\begin{thm}[Criticality for quadratic divergence of
	RACGs]\label{thm: threshold for quadratic divergence}Let
	$\epsilon>0$.  If
	\[	\frac{\lambda_c+\epsilon }{\sqrt{n}} \leq p(n) \leq 1- \frac{(1+\epsilon)\log{n}}{n}\]
	and $\Gamma\in \G(n,p)$. Then, a.a.s.\ the right-angled
	Coxeter group $W_\Gamma$  has quadratic divergence and is strongly algebraically thick of order exactly $1$. 
	
	On the other hand, if $p(n)$ satisfies
	\[0\leq p(n)\leq \frac{\lambda-\epsilon }{\sqrt{n}} \] then the
	right-angled Coxeter group $W_\Gamma$ a.a.s has at least cubic
	divergence and is not strongly algebraically thick of order $0$ or $1$.
\end{thm}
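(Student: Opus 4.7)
The plan is to translate Theorem~\ref{thm: threshold for quadratic divergence} into a purely graph-theoretic statement about $\square(\Gamma)$ via known dictionaries between geometric properties of $W_\Gamma$ and combinatorial properties of $\Gamma$, and then to invoke Theorems~\ref{thm:no_giant} and \ref{thm:1giant}. The key graph-theoretic characterizations (from Dani--Thomas and the earlier paper \cite{BehrstockFalgasRavryHagenSusse:StructureRandom}) are the following: (i) $W_\Gamma$ is strongly algebraically thick of order $0$ if and only if $\Gamma$ is a nontrivial join (in which neither side is a clique); (ii) $W_\Gamma$ has quadratic divergence, equivalently it is strongly algebraically thick of order exactly $1$, if and only if $\Gamma$ is \emph{not} a join and $\square(\Gamma)$ has a square-component with full support; and (iii) if neither (i) nor (ii) holds, then $W_\Gamma$ has at least cubic divergence and is not strongly thick of order $\le 1$. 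With these in hand, what remains is to show that in the supercritical regime $\Gamma$ has, a.a.s., both a full-support square-component and no join decomposition, while in the subcritical regime $\Gamma$ has no full-support square-component and is not a join.

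For the supercritical regime $\lambda_c+\epsilon \le p\sqrt{n} \le \sqrt{n}(1-(1+\epsilon)\log n/n)$, Theorem~\ref{thm:1giant} applies (after checking $1-p(n)\ge (1+\epsilon)\log n /n$ fits the form required by $f(n)\to 0$, $f(n)n^2\to\infty$, which it clearly does), giving a.a.s.\ a square-component of $\Gamma$ with full support. To rule out order $0$, observe that $\Gamma$ is a join iff its complement $\Gamma^c \in \G(n, 1-p)$ is disconnected; since $1-p(n) \ge (1+\epsilon)\log n / n$ lies strictly above the classical Erd{\H o}s--R\'enyi connectivity threshold, $\Gamma^c$ is a.a.s.\ connected, so $\Gamma$ is a.a.s.\ not a join. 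Combining with characterization (ii), $W_\Gamma$ is a.a.s.\ strongly algebraically thick of order exactly $1$, hence has quadratic divergence.

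For the subcritical regime $p(n) \le (\lambda_c-\epsilon)/\sqrt{n}$, Theorem~\ref{thm:no_giant} says every square-component of $\Gamma$ covers at most $O((\log n)^{2^{32}}) = o(n)$ vertices, so a.a.s.\ no square-component has full support. Moreover at this sparse edge-probability $\Gamma$ is a.a.s.\ far from being a join (for instance, a standard first-moment calculation shows $\Gamma$ has many isolated vertices, or one can simply observe that $\Gamma^c$ has overwhelming edge-density and is trivially connected). By characterization~(iii), $W_\Gamma$ is a.a.s.\ not strongly algebraically thick of order $0$ or $1$ and has at least cubic divergence.

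The genuine mathematical content of the theorem is contained in Theorems~\ref{thm:no_giant} and \ref{thm:1giant}; the present proof is essentially a bookkeeping exercise, and the only subtlety is checking the ``no join'' side condition in the supercritical range, which would otherwise be a potential escape route to order-$0$ thickness. The main obstacle is thus verifying that the upper bound $p(n)\le 1 - (1+\epsilon)\log n/n$ is tight enough to force $\Gamma^c$ to be a.a.s.\ connected; this is precisely the classical Erd{\H o}s--R\'enyi connectivity threshold and explains the shape of the hypothesis in the statement.
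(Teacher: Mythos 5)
Your proof takes essentially the same route as the paper: Theorem~\ref{thm:1giant} plus the $\CFS$--divergence dictionary and connectivity of $\Gamma^c$ in the supercritical regime, and Theorem~\ref{thm:no_giant} plus Levcovitz's not-$\CFS \Rightarrow$ at-least-cubic result together with Proposition~\ref{prop:thickpolydiv} in the subcritical regime. One small error in an aside: at $p=\Theta(n^{-1/2})$ we have $np\to\infty$, so $\Gamma$ a.a.s.\ has \emph{no} isolated vertices (the expected count $n(1-p)^{n-1}\to 0$); the ``not a join'' check should rest on $\Gamma^c$ being dense and connected, which you also give, and in fact in the subcritical direction the chain not-$\CFS\Rightarrow$ cubic $\Rightarrow$ not thick of order $\le 1$ already handles order~$0$, so this check is not actually needed there.
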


The geometric properties of $W_{\Gamma}$ when $\Gamma\in \G_{n,p}$ and
$p=1-\theta(n^{-2})$ was described in detail by Behrstock, Hagen and
Sisto in \cite[Theorem V]{BehrstockHagenSisto:coxeter}.  Together with
their work, our results give an essentially complete picture of
quadratic and linear divergence in random right-angled Coxeter groups.

\subsection*{Organization of the paper}
In Section~\ref{sec:background} we provide additional background
material on the geometry of random groups and derive Theorem~\ref{thm: threshold for quadratic divergence} from Theorems~\ref{thm:no_giant}--\ref{thm:1giant}.  In Section~\ref{section: strategy}, we recall some basic facts about branching processes and give an outline of the proof
strategy we follow for our main results, and of the ways in which it departs from the framework used by Bollo\'as and Riordan in their study of clique percolation in random graphs.
Theorem~\ref{thm:no_giant} is proved in Section~\ref{section: lower bound}, while Theorem~\ref{thm:1giant}  is derived in Section~\ref{section: upper bound}. We 
end the paper in Section~\ref{section: conclusion} with some discussion of the results and of further work and related problems.

\subsection*{Acknowledgments} The authors thank Mark Hagen for 
discussions during the early stages of this work. 
Behrstock was supported by NSF grant DMS-1710890. 
Falgas-Ravry was supported by Swedish Research Council grant  
VR 2016-03488. The authors thank Ela Behrstock for her skillful 
rendering of Figures~\ref{fig:Figure_Process_1} and~\ref{fig:Figure_Process_2}.
Aspects of this work 
were motivated by output from software written by the authors 
(available upon request from the authors, some available online at 
\url{http://comet.lehman.cuny.edu/behrstock/random.html}). 
Accordingly, this research was supported, in part, by a grant of computer time
from the City University of New York High Performance Computing
Center under NSF Grants CNS-0855217, CNS-0958379, and ACI-1126113.

\section{Graph-theoretic notation and standard notions}
Given a set $A$ and $r\in \mathbb{N}$, let $A^{(r)}$ denote the 
collection of all subsets of $A$ of cardinality $r$. So for example $A^{(2)}$ is the collection unordered distinct pairs of elements of $A$. As a notational convenience, we set $[n]:=\{1, 
2, \ldots, n\}$, and we often denote the unordered set 
$\{u,v\}$ by $uv$.

A graph is a pair $\Gamma=(V,E)$, where $V=V(\Gamma)$ is a set of
\emph{vertices} and $E=E(\Gamma)$ is a collection of pairs of vertices
referred to as the \emph{edges} of $\Gamma$.  A subgraph of $\Gamma$
is a graph $G$ with $V(G)\subseteq V(\Gamma)$ and $E(G)\subseteq
E(\Gamma)$.  If $V(G)=X$ and $E(G)=V(\Gamma)\cap X^{(2)}$, then we say
$G$ is the subgraph of $\Gamma$ induced by $X$ and denote this fact by
$G=\Gamma[X]$. When there is no risk of confusion, we may abuse
notation and use $X$ to refer to both the subset of
$V(\Gamma)$ and the associated induced subgraph $\Gamma[X]$. The complement of a graph $\Gamma=(V, E)$ is the graph $\Gamma^c=(V, V^{(2)}\setminus E)$.

A path of length $\ell$ in a graph $\Gamma$ is an ordered sequence 
of  $\ell+1$ distinct vertices $v_0, v_1, \ldots, v_{\ell}$ together 
with a set of $\ell$ edges $\{v_{i-1}v_i: \ i \in [l]\}\subseteq 
E(\Gamma)$. Such a path is said to join $v_0$ to $v_{\ell}$. Two 
vertices are said to be connected in $\Gamma$ if there is a path joining them.  Being connected is an equivalence relation on the vertices of $\Gamma$. A \emph{(connected) component} of $\Gamma$ is then a nonempty set of vertices from $V(\Gamma)$ that forms an equivalence class under this relation.

In this paper we study
\emph{squares} in graphs.  An square, or $4$--cycle, in $\Gamma$ is a copy of the graph $C_4=(\{a,b,c,d\}, \{ab, bc, cd, da\})$ as an subgraph of $\Gamma$.  In an abuse of notation, we
will denote such a $C_4$ by $abcd$.  In other words, if we say
``$abcd$ is a copy of $C_4$/a square in $\Gamma$'', we mean ``$ab, bc,
cd, da\in E(\Gamma)$''.  Further if we say ``$abcd$ is an induced
$C_4$/square in $\Gamma$'', we mean that $abcd$ is a square in
$\Gamma$ and that in addition $ac, bd\notin E(\Gamma)$.  A useful
notion for studying squares in graphs is that of a link
graph: given a vertex $x\in V(\Gamma)$, the \emph{link graph}
$\Gamma_x$ of $x$ is the collection of \emph{neighbors} of $x$ in
$\Gamma$, i.e., $\Gamma_x=\{y\in V(\Gamma):\ xy\in E(\Gamma)\}$.

By $\Gamma\in\G(n,p)$ we mean that $\Gamma$ is a random graph on the
vertex set $[n]$ obtained by including each edge $uv$ in $E(\Gamma)$
with probability $p$, independently of all the others.  This is known
as the Erd{\H o}s--R\'enyi random graph model.  Given a sequence of
edge probabilities $p=p(n)$ and a graph property $\mathcal{P}$, we say
that a typical instance of $\Gamma\in\G(n,p)$ has property
$\mathcal{P}$, or, equivalently, that $\Gamma \in \mathcal{P}$ holds
\emph{asymptotically almost surely (a.a.s.)} if
\[\lim_{n\rightarrow \infty} \mathbb{P}(\Gamma \in \mathcal{P})=1.\]
Throughout the paper, we use standard Landau notation: given 
functions $f,g\colon \mathbb{N} \rightarrow \mathbb{R}^+$, we write $f=o(g)$ for $\lim_{n\rightarrow \infty} f(n)/g(n)=0$ and $f=O(g)$ if there exists a constant $C>0$ such that $\limsup_{n\rightarrow \infty}f(n)/g(n)\leq C$. Further we write $f=\omega(g)$ for $g=o(f)$, $f=\Omega(g)$ for $g=O(f)$. Finally if $f=O(g)$ and $f=\Omega(g)$ both hold, we denote this fact by $f=\Theta(g)$.

\section{Geometric group theory and the $\CFS$ property}\label{sec:background}

 Our main result in this paper establishes that
 $p(n)=\lambda_c/\sqrt{n}$ is the threshold for a typical instance
 $\Gamma$ of the Erd{\H o}s--R\'enyi random graph model
 $\mathcal{G}(n,p)$ to have a square-graph with a component covering
 all of $V(\Gamma)$.  This property is a.a.s.\ equivalent to
 possessing the $\mathcal{CFS}$--property, defined below.  
 
 \subsection{Background}
 
 Recall that
 the graph joint $\Gamma_1\ast \Gamma_2$ of two graphs $\Gamma_1$ and
 $\Gamma_2$ is the graph obtained by taking disjoint unions of
 $\Gamma_1$ and $\Gamma_2$, and adding in all edges from $\Gamma_1$ to
 $\Gamma_2$.
 \begin{defn}
A finite graph $\Gamma$ is defined to be $\CFS$ (\emph{constructed from
squares}) if $\Gamma$ has induced subgraphs $K$ and $\Gamma'$ with $K$
a (possibly empty) clique so that:
 \begin{itemize}
 	\item $\Gamma=\Gamma'\ast K$, and 
 	\item $\square(\Gamma')$ has a component $C$ with $\supp(C)=V(\Gamma')$.
 \end{itemize}
 \end{defn}
 Dani--Thomas were the first to introduce a special case of the $\CFS$
 property for triangle-free graphs in~\cite{DaniThomas:divcox}. The
 $\CFS$ property for arbitrary graphs was then studied by Hagen and the
 authors in~\cite{BehrstockFalgasRavryHagenSusse:StructureRandom}, 
 with an eye towards establishing when this property holds a.a.s.\ in random graphs, while in \cite{Levcovitz:CFSdiv} Levcovitz studied the geometric properties of right-angled Coxeter  groups whose presentation graphs do not possess the $\CFS$ property.

With Hagen, the authors determined in~ \cite{BehrstockFalgasRavryHagenSusse:StructureRandom} the threshold for the $\CFS$ property to hold a.a.s.\ in Erd{\H o}s--R\'enyi random graphs up to a polylogarithmic factor. 
 \begin{thm}[Theorems 5.1 and 5.7 in~\cite{BehrstockFalgasRavryHagenSusse:StructureRandom}]\label{thm: old results}
	  If $p(n)\leq \left(\log n\right)^{-1}/\sqrt{n}$, then a.a.s.\ a
	 graph $\Gamma\in\G(n,p)$ does not have the $\CFS$ property.  On
	 the other hand if $p(n)\geq 5\sqrt{\log n} /\sqrt{n}$ and $(1-p)n^2\to
	 \infty$, then a.a.s.\ $\Gamma\in\G(n,p)$ does have the $\CFS$
	 property.
\end{thm}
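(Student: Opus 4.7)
The statement has two directions: a subcritical non-existence claim at $p \leq (\log n)^{-1}/\sqrt{n}$ and a supercritical existence claim at $p \geq 5\sqrt{\log n}/\sqrt{n}$ (with the technical side-condition $(1-p)n^2 \to \infty$ in the latter case). The plan is to analyse the component structure of the auxiliary square-graph $\square(\Gamma)$ in each regime, and then to separately handle the possible clique factor $K$ in a CFS decomposition $\Gamma = \Gamma' \ast K$.

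For the subcritical direction, I would use a first-moment argument. At $p \le (\log n)^{-1}/\sqrt{n}$, the expected degree of any given non-edge $\{u,v\}$ in $\square(\Gamma)$ is $\binom{n-2}{2} p^4 (1-p) \sim n^2 p^4 / 2 = O((\log n)^{-4})$, which is $o(1)$: indeed an edge of $\square(\Gamma)$ incident to $\{u,v\}$ arises precisely from a pair of common neighbours of $u$ and $v$ in $\Gamma$ that are not themselves adjacent. A standard path-counting / branching-process domination argument then shows that every component of $\square(\Gamma)$ a.a.s.\ has support of size $O(\mathrm{polylog}(n))$, far short of $n$. To finish, I would show that at this density $\Gamma$ a.a.s.\ has no non-trivial clique factor (no vertex even has full degree $n-1$), so the CFS property cannot be rescued by stripping off a non-empty $K$.

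For the supercritical direction, the plan is to show that $\square(\Gamma)$ has a single component with support equal to all of $V(\Gamma)$. At $p \geq 5\sqrt{\log n}/\sqrt{n}$, the expected degree of each non-edge in $\square(\Gamma)$ is $\Theta((\log n)^2)$, well above the branching-process threshold. Using Chernoff bounds and a union bound, one shows that every non-edge of $\Gamma$ a.a.s.\ has at least, say, $(\log n)^{3/2}$ neighbours in $\square(\Gamma)$. I would then argue connectivity directly: for any two non-edges $e_1, e_2$ of $\Gamma$, one produces a short $\square(\Gamma)$-path between them, exploiting that any pair of vertices of $\Gamma$ has many common neighbours at this density. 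A natural framing is a sprinkling argument, splitting $E(\Gamma)$ into two independent layers of comparable density, using the first layer to create many medium-size square-components with abundant internal square counts, and then the second layer to merge them into a single giant component whose support covers all of $V(\Gamma)$. The main obstacle, and the reason a $\sqrt{\log n}$ factor appears in the threshold, is this final merging step: for the requisite union bound over all pairs of components and all vertices to go through, one needs roughly $\log n$ ``fresh'' linking squares per pair, which pushes the required density up by a $\sqrt{\log n}$ factor.
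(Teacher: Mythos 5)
First, a point of comparison: the paper does not prove Theorem~\ref{thm: old results} at all --- it is quoted from Theorems~5.1 and~5.7 of~\cite{BehrstockFalgasRavryHagenSusse:StructureRandom} and used as a black box --- so there is no internal proof to measure your argument against. Judged on its own terms, your subcritical sketch is sound in outline: at $p\le (\log n)^{-1}/\sqrt{n}$ the expected degree of a non-edge in $\square(\Gamma)$ is indeed $\binom{n-2}{2}p^4(1-p)=O((\log n)^{-4})=o(1)$, the associated exploration is heavily subcritical, and a.a.s.\ no vertex of $\Gamma$ is adjacent to all others, so no clique factor can be split off. One caution: ``standard path-counting'' hides exactly the exceptional-edge bookkeeping that occupies Section~\ref{section: lower bound} of this paper. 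A path in $\square(\Gamma)$ may revisit vertices of $\Gamma$, so the naive bound of $(n^2p^4)^k$ on the expected number of $k$-step paths is not automatic and has to be justified by controlling intersection patterns; at this very low density a crude union bound does suffice, but you should say so rather than wave at it.

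The supercritical half of your proposal has a genuine gap. You propose splitting $E(\Gamma)$ into two independent edge-layers, building medium-sized square-components in the first layer and merging them with the second. For \emph{induced} square percolation this fails: if $\Gamma=\Gamma_1\cup\Gamma_2$ with independent layers, an induced square of $\Gamma_1$ need not be an induced square of $\Gamma$, because an edge of $\Gamma_2$ may land on one of its diagonals, at which point that diagonal is no longer even a vertex of $\square(\Gamma)$. The component structure of $\square(\cdot)$ is not monotone under adding edges, which is precisely why the present paper abandons edge-sprinkling and sprinkles \emph{vertices} instead in Section~\ref{section: upper bound}; the same obstruction defeats your two-layer plan. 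The viable route at $p\ge 5\sqrt{\log n}/\sqrt{n}$ is the direct one you mention only in passing: show via Chernoff bounds and a union bound that a.a.s.\ \emph{every} non-adjacent pair of vertices has $\Theta(\log n)$ common neighbours which themselves span many non-edges, and then exhibit explicit short paths in $\square(\Gamma)$ between any two non-edges (this is where the $\sqrt{\log n}$ factor is genuinely needed, to make the union bound over all $\Theta(n^2)$ pairs close). You should develop that argument and drop the edge-sprinkling framing. Finally, you never use the hypothesis $(1-p)n^2\to\infty$; it is needed to guarantee that $\Gamma$ has non-edges at all and that common neighbourhoods a.a.s.\ span non-edges when $p$ is close to $1$, and a complete proof must address that end of the range separately.
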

\noindent Our contribution in this paper is to eliminate the polylogarithmic gap in Theorem~\ref{thm: old results} and thus to determine the precise threshold for the $\CFS$ property in random graphs.

The $\CFS$ property is closely linked to the large scale geometry of
right-angled Coxeter groups, connected to \emph{divergence} and
\emph{(strong algebraic) thickness}.  Divergence is a quasi-isometry
invariant of groups introduced by Gersten~\cite{Gersten:divergence}
and further developed by Dru\c{t}u, Mozes and Sapir
\cite{DrutuMozesSapir}, while thickness was introduced by
Behrstock--Dru\c{t}u--Mosher in \cite{BDM} and then further refined 
by Behrstock--Dru\c{t}u in \cite{BehrstockDrutu:thick2}.  We define these notions
and explain how they are related below.

\begin{defn} Let $(X,d)$ be a geodesic metric space, let $o\in X$ and
let $\rho\in(0,1]$.  Given $x,y\in X$ with $d(x,o)=d(y,o)=r$, we
define $d_{\rho r}(x,y)$ to be the infimum of the lengths of paths in
$X\setminus B(o, \rho r)$ between $x$ and $y$, if such a path exists,
and $\infty$ otherwise; here $B(o, \rho r)$ denotes the ball of 
radius $\rho r$ about $o$.  We then set
	$$\delta_\rho(r)=\sup_{o\in X}\sup_{x,y} d_{\rho r}(x,y).$$
The \emph{divergence} of $X$ is defined to be the collection of
functions $\delta_{\rho}: \ r\mapsto\delta_\rho(r)$,
$$\text{Div}(X):=\{\delta_\rho: \rho\in(0,1]\}.$$
\end{defn}
Given two non-decreasing functions $f,g\colon \mathbb{N}\to\mathbb{R}^+$, we say that $f\lesssim g$ if there exists $C\ge 1$ so that:
$$f(r)\le C\cdot g(Cr+C)+Cr+C,$$ and we say $f\sim g$ if $f\lesssim g$ and $g\lesssim f$. Importantly, two  polynomials that are non decreasing $\mathbb{N}\to\mathbb{R}^+$ and have the same degree are equivalent under this relation, and further for $a,b\in \mathbb{N}$ we have $x^a\sim x^b$ if and only if $a=b$.

When $X$ is the Cayley graph of a right-angled Coxeter group, it is
straightforward to see that $\delta_\rho(r) \sim \delta_1(r)$.
Therefore when we are referring to the \emph{divergence function} of
$W_\Gamma$, we will mean $\delta_1(r)$.  We say that a RACG
$W_{\Gamma}$ has \emph{quadratic divergence} if $\delta_1(r) \sim r^2$
and \emph{linear divergence} if $\delta_1(r)\sim r$.

\begin{defn}\label{def:sathick} Let $G$ be a finitely generated group. 
	\begin{itemize}
		\item We say that $G$ is strongly algebraically thick of order $0$ if it has linear divergence.
		\item We say that $G$ is strongly algebraically thick of order at most $n$ if $G$ has a collection of subgroups $\mathcal H=\{H_\alpha\}$ so that:
		\begin{itemize}
			\item $\left\langle\bigcup_{\alpha} H_\alpha\right\rangle$ has finite index in $G$
			\item for $H_\alpha, H_{\beta}\in \mathcal H$ there exists a sequence $H_0=H_\alpha, H_1, \ldots H_{k-1}, H_k=H_\beta$ of elements of $\mathcal H$ so that $H_{i-1}\cap H_{i}$ is infinite for each $1\le i\le k$
			\item there exists a constant $M>0$ so that each $H_{\alpha}\in\mathcal H$ is \emph{$M$--quasiconvex}, that is to say, every pair of points in $H_{\alpha}$ can be connected by an $(M,M)$--quasigeodesic contained in $H_{\alpha}.$

			\item each $H_\alpha\in \mathcal H$ is strongly algebraically thick of order at most $n-1$.
		\end{itemize}
	\end{itemize}
Further, we say that $G$ is strongly algebraically thick of order
\emph{exactly} $n$ if it is strongly algebraically thick of order at
most $n$ and \emph{not} strongly algebraically thick of order at most
$n-1$.  We also usually write ``thick'' as a shorthand for ``strongly
algebraically thick''.  \end{defn}

Behrstock and Dru\c{t}u discovered that the order of thickness provides upper bounds on the divergence of a metric space. In particular they proved:

\begin{prop}[{\cite[Corollary~4.17]{BehrstockDrutu:thick2}}]\label{prop:thickpolydiv} Let $G$ be a finitely generated group which is strongly algebraically thick of order at most $n$. Then for every $\rho\in(0, 1]$, $\delta_\rho(r)\lesssim r^{n+1}$.\end{prop}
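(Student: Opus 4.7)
The plan is to proceed by induction on the order of thickness $n$. The base case $n=0$ is immediate from Definition~\ref{def:sathick}: thickness of order $0$ is by definition linear divergence, i.e., $\delta_\rho(r)\lesssim r = r^{0+1}$.

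For the inductive step, assume the bound holds for all groups strongly algebraically thick of order at most $n-1$, and let $G$ be thick of order at most $n$ with witnessing family $\mathcal H=\{H_\alpha\}$. By Definition~\ref{def:sathick} each $H_\alpha$ is thick of order at most $n-1$, so by induction each $H_\alpha$ satisfies $\delta_\rho(r)\lesssim r^n$ in its intrinsic Cayley graph. Fix an origin $o\in G$ and two points $x,y$ with $d(x,o)=d(y,o)=r$. The goal is to produce a path from $x$ to $y$ of length $\lesssim r^{n+1}$ that avoids $B(o,\rho r)$.

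The construction I would use breaks into three steps. First, since $\langle \bigcup_\alpha H_\alpha\rangle$ has finite index in $G$, the Cayley graph of $G$ is covered, up to bounded error, by translates $gH_\alpha$, each of which is $M$-quasiconvex. Second, pick a geodesic $\sigma$ from $x$ to $y$ in $G$ (of length at most $2r$) and subdivide it into $O(r)$ consecutive unit-length arcs, each contained in some translate $g_iH_{\alpha_i}$; by the chain condition on $\mathcal H$, successive translates can be linked by finite sequences in which consecutive members have infinite intersection, so one can choose jump points in those intersections that lie outside $B(o,\rho r)$. Third, within each translate $g_iH_{\alpha_i}$ use the inductive hypothesis together with $M$-quasiconvexity to produce a detour of length $\lesssim r^n$ that avoids $B(o,\rho r)$. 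Concatenating these $O(r)$ detours via the jump points yields a path of total length $\lesssim r\cdot r^n = r^{n+1}$.

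The main obstacle is the third step: transferring the divergence bound from the intrinsic metric of $H_\alpha$ to the ambient metric on $G$. This is where $M$-quasiconvexity is essential, since it ensures that intrinsic distances inside $H_\alpha$ are comparable to ambient distances in $G$ along the translate $g H_\alpha$, so that a detour inside $H_\alpha$ of length $\lesssim r^n$ avoiding a ball of comparable radius in the intrinsic metric translates back to a detour in $G$ of length $\lesssim r^n$ avoiding $B(o,\rho r)$ (after possibly adjusting $\rho$ by a bounded factor, which does not affect the asymptotic bound). A secondary delicate point is arranging the jump points in the infinite intersections to simultaneously lie outside $B(o,\rho r)$; this is possible precisely because consecutive members of the chain share an infinite, hence unbounded, subgroup in the Cayley metric, so such jump points can always be selected at prescribed distance from $o$.
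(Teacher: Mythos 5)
The paper quotes this proposition without proof --- it is cited as Corollary~4.17 of Behrstock--Dru\c tu --- so I am comparing your sketch against that source rather than against anything in this paper. Your overall architecture is the right one and does match theirs: induct on thickness order, cover a path from $x$ to $y$ by cosets $g_iH_{\alpha_i}$, detour inside each coset using the inductive divergence bound, and splice the detours together at jump points in consecutive coset intersections, giving $O(r)\cdot O(r^{n})=O(r^{n+1})$.

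However, there is a genuine gap you pass over in a single clause. The chaining hypothesis in Definition~\ref{def:sathick} is a statement about \emph{subgroups}: for $H_\alpha,H_\beta\in\mathcal H$ there is a chain $H_\alpha=H_0,\ldots,H_k=H_\beta$ with each $H_{i-1}\cap H_i$ infinite. Your argument needs the analogous statement for \emph{cosets}: that $g_iH_{\alpha_i}$ and $g_{i+1}H_{\alpha_{i+1}}$ can be joined by a chain of cosets whose consecutive coarse intersections have infinite (indeed unbounded) diameter, with chain length controlled by the distance between them. This does not follow by translation --- for arbitrary $g,g'$, the set $gH_\alpha\cap g'H_\beta$ can be empty or a point even when $H_\alpha\cap H_\beta$ is infinite --- and proving it is a real lemma in Behrstock--Dru\c tu (one reduces to the finite-index subgroup $\langle\bigcup_\alpha H_\alpha\rangle$, picks a generating set from $\bigcup_\alpha H_\alpha$, and chains via $gH_{\gamma_1}\to gh_1H_{\gamma_1}\to gh_1H_{\gamma_2}\to\cdots$, using that each $H_{\gamma_j}\cap H_{\gamma_{j+1}}$ is infinite). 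Your phrase ``by the chain condition on $\mathcal H$, successive translates can be linked'' asserts the conclusion of that lemma without its content, and this is precisely the step on which the linear bound on the number of pieces, and hence the exponent $n+1$, rests. Relatedly, the jump points must lie in the \emph{coarse intersection} of consecutive cosets and simultaneously far from $o$; that one can do so uniformly requires the coarse intersections to have unbounded diameter \emph{as subsets of $G$}, which again is part of the coset-level lemma, not a direct consequence of ``$H_{i-1}\cap H_i$ is infinite.'' The remainder of your sketch --- projecting $o$ to each piece, matching ambient and intrinsic balls via $M$-quasiconvexity, and applying the inductive bound at scale $\Theta(r)$ --- is the right shape, although the degenerate case in which the jump points land close to the projected basepoint needs separate handling, and the implied constants must be uniform over $\mathcal H$ (so $\mathcal H$ should be finite, or stated to have uniform constants).
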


The group theoretic motivation for studying the $\CFS$ property is that it provides a graph theoretical proxy for certain geometric properties of right-angled Coxeter groups, such as their divergence. To see that $W_\Gamma$ has quadratic divergence when $\Gamma$ has the $\CFS$ property is straightforward, since interpreting the  definition of $\CFS$ in the Cayley graph yields a chain of linearly many spaces with linear divergence with each intersecting the next in an infinite diameter set. Indeed, it is an immediate consequence of the definitions that if  if $G$ is the direct product of two infinite groups then $G$ has linear divergence, just as a path avoiding a linear-sized ball in the plane has linear length. 
 Hence every finitely generated abelian group of rank at least $2$ has linear divergence (and is thick of order $0$).

Now, if $\Gamma=K\star \Gamma'$ where $K$ is a clique, then $W_\Gamma\cong \mathbb{Z}_2^{|K|}\times W_{\Gamma'}$. In such a case $W_{\Gamma'}$ is a finite-index subgroup of  $W_{\Gamma'}$ and thus, up to finite index, we can assume that $\Gamma$ does not contain a vertex sending an edge to all other vertices of $\Gamma$.

Now, $W_\Gamma$ contains a network of convex subgroups generated by the induced square in the full-support component of $\square(\Gamma)$. Each of these groups is \emph{virtually $\mathbb{Z}^2$}, that is to say has a finite index subgroup which is a copy of $\mathbb{Z}^2$. Further, two induced squares in $\Gamma$ correspond to incident edges in $\square(\Gamma)$ if and only if the intersection of the associated virtual $\mathbb{Z}^2$ subgroups is \emph{virtually $\mathbb{Z}$}, that is to say has a finite index subgroup which is a copy of $\mathbb{Z}$.

 Thus, paths in the full-support component of $\square(\Gamma)$ give the connecting sequences needed in Definition~\ref{def:sathick}. Hence if $\Gamma$ has the $\CFS$ property, $W_\Gamma$ is thick of order at most $1$ and has at most quadratic divergence.

 As shown by Dani--Thomas in the triangle-free case \cite[Theorem~1.1
 and Remark~4,8]{DaniThomas:divcox}, and by the present authors with
 Hagen in the general case (as above)
 \cite[Proposition~3.1]{BehrstockFalgasRavryHagenSusse:StructureRandom},
 if $\Gamma$ has the $\mathcal{CFS}$ property then the associated
 right-angled Coxeter group $W_\Gamma$ has thickness of order at most
 $1$, and hence has at most quadratic divergence.  Further, in
 \cite{BehrstockHagenSisto:coxeter}, Behrstock, Hagen and Sisto show
 that a right-angled Coxeter group $W_\Gamma$ has linear divergence
 (and is thick of order $0$) if and only if $\Gamma$ is the join of
 two non-complete graphs.  Finally Levcovitz proved that any graph
 without $\CFS$ has at least cubic divergence \cite{Levcovitz:CFSdiv},
 and so we see that $W_\Gamma$ has exactly quadratic divergence if and
 only if $\Gamma$ is not the join of two non-complete graphs and has
 the $\CFS$ property.

 \subsection{Proof of threshold for quadratic divergence in random 
 RACGs}

Assuming our main theorems about square percolation, 
Theorems~\ref{thm:no_giant} and~\ref{thm:1giant}, we are now in a position to 
provide a proof of Theorem~\ref{thm: threshold for quadratic divergence} on the threshold for quadratic divergence in RACGs:
\begin{proof}[Proof of Theorem~\ref{thm: threshold for quadratic divergence} from Theorems~\ref{thm:no_giant} and~\ref{thm:1giant}]

Let $\epsilon>0$, and suppose that
\[\frac{\lambda_c+\epsilon}{\sqrt{n}}\le p(n)\le
1-\frac{(1+\epsilon)\log{n}}{n}.\] By Theorem~\ref{thm:1giant}, the
graph $\Gamma$ a.a.s.\ has the $\CFS$ property.  Thus, by
\cite[Proposition~3.1]{BehrstockFalgasRavryHagenSusse:StructureRandom},
$W_\Gamma$ a.a.s.\ has at most quadratic divergence and is thick of
order at most 1.  Further, since $1-p(n)\ge
\frac{(1+\epsilon)\log{n}}{n}$, standard results on the connectivity of Erd{\H o}s--R\'enyi random graphs tell us that a.a.s.\ the complement of $\Gamma$ is connected, and thus that $\Gamma$ itself is a.a.s.\  not
the join of two non-trivial graphs.  Thus,
\cite{BehrstockHagenSisto:coxeter} implies that $W_\Gamma$ is not
thick of order $0$ and hence is thick of order exactly one and has
precisely quadratic divergence.

On the other hand, if \[\frac{\lambda_c-\epsilon}{\sqrt{n}}\le p(n),\]
then by Theorem~\ref{thm:no_giant} no component of the square graph can 
have full support, and thus the graph $\Gamma$ is not $\CFS$.
It then follows from \cite{Levcovitz:CFSdiv}, that $W_\Gamma$ has at least cubic
divergence, and thus by Proposition~\ref{prop:thickpolydiv} that it is not thick of order~1.
\end{proof}

\section{Branching processes and proof strategy}\label{section: strategy}

\subsection{Branching processes}\label{subsection: branching processes}
We recall here some basic facts and definitions from the theory of branching processes that we will use in our argument; for a more general treatment of such processes, see e.g.~\cite{Balister06}.
\begin{defn}
	A Galton--Watson branching process $\mathbf{W}=(W_t)_{t\in \mathbb{Z}_{\geq 0}}$ with offspring  distribution $X$ is a sequence of non-negative integer-valued random variables with $W_0=1$ and for all $t\geq 1$, $W_t=\sum_{i=1}^{W_{t-1}}X_{i,t}$, where the $X_{i,t}$: $i, t\in \mathbb{N}$ are independent, identically-distributed random variables with $X_{i,t}\sim X$ for all $i,t$.
\end{defn}  
A Galton--Watson branching process can be viewed as a random rooted tree: in the zero-th generation there is a root or ancestor, who begets a random number $X_{1,1}\sim X$ of children that form the first generation. In every subsequent generation, each child independently begets a random number of children, with the $i$-th member of generation $t$ begetting $X_{i,t}\sim X$ children.

Galton--Watson branching processes are a widely studied family of random processes and are the subject of much probabilistic research; see e.g.~\cite{Balister06} and the references therein. Here we introduce only some fairly standard elements of the theory that are needed for our argument. A Galton--Watson process $\mathbf{W}$ is said to become \emph{extinct} if $W_t=0$ for some $t\in \mathbb{N}$. The \emph{total progeny} of $\mathbf{W}$ is the total number of vertices in the associated tree, which we denote by $W=\sum_{t=0}^{\infty}W_t$; this quantity is finite if and only if $\mathbf{W}$ becomes extinct.

A key tool  in the study of $\mathbf{W}$ is the generating function of its offspring distribution, $f_X(t)=\mathbb{E}t^X$. The following standard results from the theory of branching processes relate the probability of extinction for $\mathbf{W}$ to the mean and generating function of its offspring distribution $X$. 
\begin{prop}[See e.g.~\cite{Balister06}]\label{proposition: basic branching processes}
	Let $\mu= \mathbb{E}X$ and $f(t)=f_X(t)$.  Let $\mathbf{W}$ be a
	Galton--Watson branching process with offspring distribution $X$.
	Then the following hold: 
	\begin{enumerate}[label=(\roman*)]
		\item \textbf{(subcritical regime)} if $\mu<1$, then almost surely
		$\mathbf{W}$ becomes extinct, and what is more
		\[\mathbb{P}\left(\mathbf{W} \textrm{ has not become extinct by 
			generation }k \right)=\mathbb{P}(W_k \neq 0)\leq \mu^{k}.\]
		\item \textbf{(supercritical regime)} if $\mu>1$, then the probability $\theta_e$ that $\mathbf{W}$ becomes extinct is the smallest solution $\theta \in [0,1]$ to the equation
		\[f(\theta) =\theta,\]
		and satisfies $\theta_e<1$.
	\end{enumerate}
\end{prop}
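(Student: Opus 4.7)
The plan is to analyze the process through its probability generating function $f(t) = \mathbb{E}t^X = \sum_{k \geq 0}\mathbb{P}(X=k)t^k$. Several properties of $f$ on $[0,1]$ will be central: $f(1)=1$, $f'(1)=\mu$, $f$ is non-decreasing, and $f$ is convex (since $f''(t) = \mathbb{E}[X(X-1)t^{X-2}] \geq 0$). The structural identity I will exploit is that if $f_t$ denotes the generating function of $W_t$, then $f_{t+1} = f_t \circ f$; this follows by conditioning on $W_t$ and using the independence of the offspring variables $X_{i,t+1}$ given $W_t$. In particular, writing $\theta_t := \mathbb{P}(W_t = 0) = f_t(0)$, one obtains the iteration $\theta_{t+1} = f(\theta_t)$ with $\theta_0 = 0$.

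For claim (i), the subcritical case, I would first show by induction on $t$ that $\mathbb{E}W_t = \mu^t$: conditioning on $W_{t-1}$ gives $\mathbb{E}[W_t \mid W_{t-1}] = \mu W_{t-1}$ by Wald-type additivity, and then $\mathbb{E}W_t = \mu\,\mathbb{E}W_{t-1}$. Since $W_k$ is a non-negative integer-valued random variable, Markov's inequality yields $\mathbb{P}(W_k \neq 0) = \mathbb{P}(W_k \geq 1) \leq \mathbb{E}W_k = \mu^k$, which is the desired tail bound. When $\mu < 1$ this tends to $0$, and since $\{W_k = 0\}$ is a monotone increasing sequence of events, $\mathbb{P}(\text{extinction}) = \lim_{k \to \infty}\mathbb{P}(W_k = 0) = 1$.

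For claim (ii), the supercritical case, I would work with the recursion $\theta_{t+1} = f(\theta_t)$. Since $f$ is non-decreasing and $\theta_1 = f(0) \geq 0 = \theta_0$, a straightforward induction shows that $(\theta_t)_{t \geq 0}$ is non-decreasing; being bounded above by $1$, it converges to a limit $\theta_e \in [0,1]$, which by continuity of $f$ satisfies $f(\theta_e) = \theta_e$. Moreover, if $\theta \in [0,1]$ is \emph{any} fixed point of $f$, then $\theta_0 = 0 \leq \theta$, and monotonicity of $f$ promotes this inequality inductively to $\theta_t \leq \theta$ for all $t$, whence $\theta_e \leq \theta$. Thus $\theta_e$ is the least fixed point. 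Finally, to conclude $\theta_e < 1$ when $\mu > 1$, I will invoke convexity: the tangent to $f$ at $1$ has slope $\mu > 1$, so $f(t) < t$ for $t$ slightly less than $1$, while $f(0) = \mathbb{P}(X = 0) \geq 0$. An application of the intermediate value theorem to $g(t) := f(t) - t$ on $[0,1)$ then produces a fixed point strictly below $1$, forcing $\theta_e < 1$.

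The bulk of the argument is standard calculus with generating functions, and I expect the only mildly delicate step to be the last one, which requires ruling out degenerate behaviour near $t=1$. Here I will use convexity of $f$ to ensure that $f(t) - t$ is a convex function with $(f-\mathrm{id})(1)=0$ and negative derivative there; this gives $f(t) < t$ on a left-neighborhood of $1$ uniformly, and the existence of a strictly smaller fixed point follows without issue. The edge case $\mathbb{P}(X = 0) = 0$ is trivial (extinction is impossible, so $\theta_e = 0$), so no special handling is needed there.
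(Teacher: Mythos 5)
The paper does not prove this proposition at all: it is stated as a standard fact from the theory of branching processes and dispatched with the citation to Balister's survey. Your argument is the standard generating-function proof one finds in that literature, and it is correct: the first-moment computation $\mathbb{E}W_k=\mu^k$ plus Markov for (i), and the fixed-point iteration $\theta_{t+1}=f(\theta_t)$ from $\theta_0=0$ together with monotonicity and convexity of $f$ for (ii). Two small points of hygiene. First, in your closing paragraph you say $f-\mathrm{id}$ has ``negative derivative'' at $1$; its derivative there is $\mu-1>0$ (you presumably meant $\mathrm{id}-f$), and it is precisely this positivity of the difference quotient $g(t)/(t-1)\to\mu-1$ as $t\uparrow 1$ that forces $g(t)<0$ just below $1$ --- note that convexity alone only gives the tangent-line \emph{lower} bound $f(t)\ge 1+\mu(t-1)$, which points the wrong way, so the local derivative argument is the one doing the work. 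Second, if $\mu=\mathbb{E}X$ is allowed to be infinite the left derivative $f'(1^-)$ diverges, but the same difference-quotient argument goes through. Neither issue affects the validity of the proof.
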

\noindent We shall also need the following result on the distribution of the total progeny $W$ of $\mathbf{W}$.
\begin{prop}[Dwass's formula~\cite{Dwass69}]\label{proposition: Dwass}
	Let $\mathbf{W}$ be a Galton--Watson branching process with offspring distribution $X$. Then the total progeny $W$ satisfies
	\[\mathbb{P}\left(W=k\right)= \frac{1}{k}\mathbb{P}\left(X_1+X_2+\cdots X_k=k-1\right),\]
	where $X_1, X_2, \ldots , X_k$ are independent, identically distributed random variables with $X_i\sim X$ for all $i\in [k]$.
\end{prop}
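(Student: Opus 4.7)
The plan is to derive Dwass's formula from the \emph{cycle lemma} of Dvoretzky--Motzkin, applied to a queue (equivalently, breadth-first) exploration of the Galton--Watson tree. I would take $Y_1, Y_2, \ldots$ to be an infinite i.i.d.\ sequence with $Y_i \sim X$, where $Y_j$ represents the number of offspring of the $j$-th individual encountered in the exploration; if the exploration terminates before we have used all the $Y_j$, the sequence is extended by fresh independent copies of $X$, which is harmless. Setting $S_j := Y_1 + \cdots + Y_j - j$, so that $1 + S_j$ is the queue length after $j$ processing steps, a direct bookkeeping check identifies
\[ \{ W = k \} = \{ Y_1 + \cdots + Y_k = k-1 \} \cap \{ Y_1 + \cdots + Y_j \geq j \text{ for all } 1 \leq j \leq k-1 \},\]
which depends only on the $k$-tuple $(Y_1, \ldots, Y_k)$.

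The combinatorial core of the argument is the cycle lemma: for any non-negative integer sequence $(y_1, \ldots, y_k)$ with $\sum_i y_i = k-1$, exactly one of its $k$ cyclic rotations $(y_{\pi(1)}, \ldots, y_{\pi(k)})$ satisfies the partial-sum inequality $y_{\pi(1)} + \cdots + y_{\pi(j)} \geq j$ for every $1 \leq j \leq k - 1$. I would give a short self-contained proof: pick the rotation that starts just after the smallest index $j^\ast \in \{0, 1, \ldots, k-1\}$ at which the walk $S_j$ attains its minimum, and verify existence and uniqueness of this good rotation directly from the minimality property of the partial sums.

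Finally, since $Y_1, \ldots, Y_k$ are i.i.d., every cyclic rotation of $(Y_1, \ldots, Y_k)$ has the same joint distribution as $(Y_1, \ldots, Y_k)$ itself. Summing $\mathbb{P}(W=k)$ over all $k$ cyclic rotations and invoking the cycle lemma (which guarantees that from each realisation with $\sum_i Y_i = k-1$ exactly one rotation contributes) yields
\[ k \, \mathbb{P}(W=k) = \mathbb{P}(Y_1 + \cdots + Y_k = k-1) = \mathbb{P}(X_1 + \cdots + X_k = k-1), \]
which rearranges to Dwass's formula.

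The only real technical subtlety is the bookkeeping between the exploration (which terminates at a random time) and the deterministic i.i.d.\ sequence $Y_1, Y_2, \ldots$: one must verify that, thanks to the extension of the $Y_j$ past the termination time, the event $\{W = k\}$ really is measurable with respect to $(Y_1, \ldots, Y_k)$ alone, so that cyclic-shift symmetry can be applied to it. Once this identification is in hand, the proof reduces to the deterministic cycle lemma combined with a one-line exchangeability symmetrisation.
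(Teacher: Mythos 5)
The paper does not prove this proposition at all: it is quoted as a known result with a citation to Dwass's 1969 paper, so there is no in-paper argument to compare yours against. Your proposal is the standard modern proof of the Otter--Dwass hitting-time formula and it is correct: the identification of $\{W=k\}$ with the event that the queue-length walk $S_j=Y_1+\cdots+Y_j-j$ first reaches $-1$ at step $k$ is exactly right for a breadth-first (or indeed any one-individual-at-a-time) exploration, the extension of the offspring sequence by fresh independent copies past the termination time correctly makes $\{W=k\}$ a measurable function of $(Y_1,\dots,Y_k)$ alone, and the Dvoretzky--Motzkin cycle lemma plus cyclic exchangeability of an i.i.d.\ $k$-tuple gives $k\,\mathbb{P}(W=k)=\mathbb{P}(X_1+\cdots+X_k=k-1)$. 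The one place to be careful when you write out the cycle lemma in full is the choice of rotation point: with total increment $\sum_i y_i - k = -1$ over the cycle, the good rotation starts just after the \emph{first} index at which the walk attains its minimum (starting after the last such index gives strict positivity rather than the required $\geq j$ bound at the intermediate steps, and one must check uniqueness by noting that any other starting point produces some prefix with partial sum $< j$). This is routine, and your outline already flags that the existence/uniqueness verification is needed, so I see no gap.
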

\subsection{Departures from the Bollob\'as--Riordan framework}

Bollob\'as and Riordan in \cite{BollobasRiordan:cliquepercolation} 
developed a powerful branching process framework for the study of clique percolation. Much of that framework can be adapted to the study of the non-induced square percolation we are concerned with in this paper. 
However there remains a number of significant hurdles which need to be overcome in order to extend their techniques to the present setting.

In the subcritical regime, the structure of squares makes the analysis of exceptional edges and offspring distributions (which are the crux of the argument) differ significantly from the Bollob\'as--Riordan paper;
care is needed to handle the resulting complications correctly. Indeed, Bollob\'as and Riordan are able to model clique percolation using a Galton--Watson branching process whose offspring distribution is roughly Poisson; however, for square percolation, the offspring distribution is more heavy-tailed, forcing us to resort to somewhat delicate technical arguments.

Further, in the supercritical regime, because of our motivation from geometric group theory, we are
interested in the study of \emph{induced} square percolation. In
particular, adding new edges to a graph
could destroy some induced squares and hence split apart
square-components even as we are trying to build a giant
square-component.  This situation is quite unlike that in clique
percolation, and we have to use a completely different sprinkling
argument to obtain our results (\emph{inter alia} sprinkling vertices
rather than edges).
Thus here again there are significant complications and major departures from Bollob\'as  and Riordan's framework in~\cite{BollobasRiordan:cliquepercolation}.

\subsection{Proof strategy}

Our results rely on the analysis of a branching
process exploration of the square-components of a graph $\Gamma\in 
\G(n,p)$ for some fixed $p=\lambda n^{-1/2}$ where $\lambda>0$.

We begin with an arbitrary induced square $S_1=abcd$ in $\Gamma$.  Its
diagonals $ac$ and $bd$ give us two pairs of non-edges which can be 
used to discover further non-edges of $\Gamma$ belonging to the same square-component.  The size of the set
$\left( \Gamma_a\cap \Gamma_c\right)\setminus \{b,d\}$ of common
neighbors of $a$ and $c$ in $V(\Gamma)\setminus\{b,d\}$ is a
binomially distributed random variable $Z \sim \mathrm{Binom}(n-4,
p^2)$.  Assuming that $\Gamma_a\cap \Gamma_c$ is an \emph{independent 
set} (i.e., contains no edge of $\Gamma$)
these common neighbors together with $b,d$ give rise to $\binom{Z
+2}{2}$ non-edges that lie in the same square-component as $ac$; 
however, since we already knew about the pair $bd$, only
$X=\binom{Z+2}{2}-1$ of these are new.  We then pursue our
exploration of the square-component of $ac$ by iterating this
procedure: for each as-yet untested non-edge $xy$ in our
square-component, we can first find the common neighbors of $xy$, and
add as ``children'' of $xy$ all the new non-edges discovered in this
way. 

This can be viewed as a Galton--Watson branching process $\mathbf{W}$ with offspring distribution $X$ in a natural way. 
Assuming the past exploration does not greatly interfere with the
distribution of the number of children in our process, the expected
number of children at each step is roughly equal to
\[\mathbb{E}X=\mathbb{E}\left(\binom{Z+2}{2}-1\right)=\mathbb{E}\left( \frac{Z^{2}+3Z}{2}\right).\]
The expected value of $X$ is readily computed from 
the first and second moments of 
the binomial distribution of $Z$, yielding
\[\mathbb{E}X = \frac{1}{2}\lambda^4+2\lambda^2+o(1).\]
The Galton--Watson process $\mathbf{W}$ becomes critical when the expectation of its offspring distribution is $1$. Solving  
\[\frac{1}{2}\lambda^4+2\lambda^2=1 \]
and selecting the non-negative root $\lambda_c= \sqrt{\sqrt{6}-2}=0.6704\ldots$, we thus see that for for any fixed $\lambda<\lambda_c$, our branching process $\mathbf{W}$ is
subcritical. We thus expect it to terminate a.a.s.\ after a fairly small
number of steps, from which one can hope to, in turn, deduce that 
a.a.s.\
all square-components are small. On the other hand, for any fixed
$\lambda> \lambda_c$, $\mathbf{W}$ is supercritical, and with
probability strictly bounded away from zero it does not terminate
before we have discovered a reasonably large number of non-edges. A 
second-moment argument can then be used to show that a strictly
positive proportion of non-edges must lie in reasonably large
square-components. With a little glueing work, we can then hope to show that in fact a strictly positive proportion of non-edges lie in a giant square-component that covers all the vertices of $\Gamma$.

The above is however a simplification of what is actually required to make the arguments go through, and the situation turns out to be considerably more nuanced than what we described above. A first issue is our assumption that the vertices in $Z$ form an independent set:  
in the subcritical regime, we need to consider what happens if the 
set $Z$ of common neighbors of some non-edge $xy$ which we are 
testing interacts with some other previously discovered vertices, or 
with vertices in $Z$. In particular, any ``exceptional'' edge from $Z$ to previously
discovered vertices other than $x$, $y$ could potentially create many
additional squares, and hence add many new pairs to our
square-component which are not accounted for by our branching
process.  Bollob\'as and Riordan faced a similar problem in their work
on clique percolation.  However, as stated in the previous subsection, the
way they dealt with ``exceptional edges'' does not quite
work for us in the square percolation setting.  One issue is that in a copy
of $C_4$, vertices on opposite sides of a diagonal are not adjacent, so that
the number of $4$-cycles created by an exceptional edge cannot always
be bounded by the degree of a newly discovered vertex.  In addition, we 
note that if it is not dealt with properly, the presence of
exceptional edges could significantly affect the future distribution
of the number of children in our branching process: if in the example
above $a,c$ had \emph{three} common neighbors among the already
discovered vertices rather than two, then the correct number of
children for $ac$ in the exploration process would be
$\binom{Z+3}{2}-3= \binom{Z+2}{2}-1 + Z$, which has expectation equal
to $1+\lambda_c^2>1$ when $\lambda=\lambda_c$.  Finally, for the
argument to work, we need not only for a Galton--Watson branching
process with offspring distribution $X$ to become a.a.s.\ extinct
within a few generations (which is an easy first moment argument): we
also need its total progeny to be a.a.s.\ small.  Here the fact that
$X$ is a quadratic function of the binomial random variable $Z$ (and
thus rather heavy-tailed) causes complicated issues, which
were not faced in \cite{BollobasRiordan:cliquepercolation} (where the
offspring distribution was essentially Poisson with mean $<1$).
Overcoming these problems is the main work done in
Section~\ref{section: lower bound}.

Secondly, in the supercritical argument, after establishing the 
a.a.s.\ existence of many non-edges in reasonably large
square-components, we must prove the a.a.s.\ existence of a giant
square-component covering all vertices and a strictly positive proportion of 
non-edges of $\Gamma$.  Here the crucial point is that, because of the applications in geometric group theory motivating our work, we are considering \emph{induced}
square percolation. The size of a largest square-component in
$\Gamma$ is not monotone with respect to the addition of edges to the
graph --- adding an edge could very well destroy an induced square,
thus potentially breaking a large square-component in to several
smaller pieces.  So we have to use a completely new sprinkling
argument to be able to agglomerate all ``reasonably large'' square-components into a single giant square-component.  To do this we reserve
some vertices for sprinkling, rather than edges.  We use these
vertices to build bridges between reasonably large square-components
in a sequence of rounds until all such components are joined into one.
Finally once  we have established the a.a.s.\ existence
of a giant square-component, some care is needed to ensure this square-component covers every vertex of
$\Gamma$. 
Assembling a giant square-component and ensuring it has full support
in this way involves overcoming a number of interesting  
obstacles, and is the main work done in Section~\ref{section: upper bound}.

\section{The subcritical regime: proof of Theorem~\ref{thm:no_giant}}\label{section: lower bound}

Theorem~\ref{thm:no_giant} will be established as an immediate 
consequence of a stronger result, Theorem~\ref{thm:strong_no_giant}, which we 
state and prove below after providing a few preliminary definitions.

Given a graph $\Gamma$, in addition to the square-graph $\square(\Gamma)$ from Definition~\ref{def: square graph} we shall consider a different but closely related auxiliary graph $\noninduced(\Gamma)$ that includes information about all squares in $\Gamma$ (rather than just the induced squares). Explicitly 
we let $\noninduced:=(V(\Gamma)^{(2)}, \{\{ac, bd\}: \ ab, bc, cd, ad
\in E(\Gamma)\})$ be the graph whose vertices are pairs of vertices from $V(\Gamma)$ and whose
edges correspond to (not necessarily induced) copies of $C_4$.  The \emph{support} $\mathrm{supp}(C)$ of a component $C$ of $\noninduced$ is defined as in Definition~\ref{def: support of a square component}, mutatis mutandis.

Note that the square graph $\square (\Gamma)$ is exactly the
subgraph of $\noninduced$  induced by the set $\{ab\in V(\Gamma)^{(2)}: \ ab\notin E(\Gamma)\}$ of non-edges of $\Gamma$. In particular, for every square-component $C$ in $\square (\Gamma)$, there is a component $C'$ in $\noninduced$ with $C\subseteq C'$ and thus
$\supp(C)\subseteq \supp(C')$.   To establish Theorem~\ref{thm:no_giant}, it is thus enough to prove the following 
stronger theorem that bounds the size of the support in $\Gamma$ of components of $\noninduced$.
\begin{thm}\label{thm:strong_no_giant}
Let $\lambda<\lambda_c$ be fixed.  Suppose that $p(n)\le \lambda
n^{-1/2}$.  Then for $\Gamma\in\G(n,p)$, a.a.s.\ every component of
$\noninduced$ has a support of size $O((\log n)^{2^{31}})$. 
\end{thm}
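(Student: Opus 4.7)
The plan is to explore the component of an arbitrary pair $p_0=uv$ in $\noninduced$ by a breadth-first search and dominate it from above by a subcritical Galton--Watson branching process. In each round, for every active pair $xy$ on the frontier, I expose the set $N(x)\cap N(y)\subseteq V(\Gamma)$ of common neighbors in $\Gamma$, and add every unvisited pair $\{x',y'\}$ with $x',y'\in N(x)\cap N(y)$ as a child of $xy$: each such pair is joined to $xy$ in $\noninduced$ by the $4$-cycle $xx'yy'$. Revealing edges of $\Gamma$ lazily as they are needed, a freshly tested pair has common-neighbor count $Z\sim\mathrm{Binom}(n-O(1),p^2)$ and offspring dominated by $X=\binom{Z+O(1)}{2}$; the calculation in Section~\ref{section: strategy} gives $\mathbb{E}X=\tfrac12\lambda^4+2\lambda^2+o(1)$, which is strictly less than $1$ by the very defining equation of $\lambda_c$ whenever $\lambda<\lambda_c$. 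Proposition~\ref{proposition: basic branching processes}(i) then immediately yields a.a.s.\ extinction within $O(\log n)$ generations.

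The main obstacle is to upgrade a.a.s.\ extinction to an a.a.s.\ polylogarithmic bound on the \emph{total progeny} of the exploration, since the support of the component of $p_0$ is at most twice the total progeny. Two difficulties stand in the way. First, $X$ is a quadratic function of a binomial variable, hence considerably more heavy-tailed than the (essentially Poisson) offspring distribution in the clique-percolation setting of~\cite{BollobasRiordan:cliquepercolation}: a single pair with $L$ common neighbors can produce up to order $L^2$ children. Second, common neighbors revealed in later rounds may interact with the history set of previously exposed vertices through \emph{exceptional edges}, creating extra $4$-cycles beyond those counted by the pristine branching process; thanks to the quadratic scaling of $X$, even one exceptional edge can inflate the offspring count by up to order $L$ at that step.

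I would address both via truncation and a layered analysis. A Chernoff estimate together with a union bound over the $\binom{n}{2}$ vertex pairs shows that a.a.s.\ $|N(x)\cap N(y)|\le L:=C\log n$ for every pair $xy$ with $C$ a sufficiently large constant; conditionally on this event, the exploration is dominated by a Galton--Watson process with bounded offspring $X\le\binom{L+O(1)}{2}=O(L^2)$ and mean strictly less than $1$. Dwass's formula (Proposition~\ref{proposition: Dwass}) combined with a standard Chernoff bound for i.i.d.\ sums of bounded variables then gives $\mathbb{P}(W\ge K)\le \exp\bigl(-\Omega(K/L^2)\bigr)$, which is $o(n^{-3})$ once $K=(\log n)^{O(1)}$. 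Controlling exceptional edges is the delicate part: I would show that the total number of exceptional edges encountered during the exploration is itself a.a.s.\ only polylogarithmic, and that each contributes only a polylogarithmic multiplicative overhead to the offspring count at its step, by revealing vertices in carefully chosen batches and iterating a bootstrap until the error terms are absorbed. Compounded multiplicatively across the layers of the bootstrap, the accumulated polylog factors yield the very generous --- but harmless --- exponent $2^{31}$ in the final bound. A union bound over all $O(n^2)$ starting pairs then upgrades the individual estimate to the uniform statement of Theorem~\ref{thm:strong_no_giant}, and Theorem~\ref{thm:no_giant} follows at once from the containment $\square(\Gamma)\subseteq\noninduced$ and the observation that component supports respect that inclusion.
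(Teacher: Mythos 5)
Your overall architecture matches the paper's: an exploration of the component of $uv$ in $\noninduced$, stochastic domination by a subcritical Galton--Watson process with offspring $X=\binom{Z+2}{2}-1$, a truncation-plus-Dwass argument for the total progeny (the paper truncates at $2^8(\log n+\log k)^2$ inside Dwass's formula rather than conditioning globally, which sidesteps the conditioning issue, but your route is workable and the slight discrepancy in your exponent --- $K/L^2$ versus the $K/L^4$ that a range-based Chernoff bound actually gives --- is immaterial), and a final union bound over starting pairs. The gap is in your treatment of exceptional edges, which is where all the real work lies.

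First, your accounting does not close. You propose to show that the number of exceptional edges is a.a.s.\ polylogarithmic and that each contributes a polylogarithmic multiplicative overhead, with the factors ``compounded multiplicatively across the layers of the bootstrap.'' Compounding a polylogarithmic factor polylogarithmically many times gives $(\log n)^{(\log n)^{O(1)}}$, which is quasi-polynomial, not polylogarithmic; the exponent $2^{31}$ can only emerge if the number of compounding rounds is an absolute constant. The paper's argument (Lemma~\ref{lemma: probability of exceptional stop is small}) is that the exploration a.a.s.\ encounters at most \emph{four} exceptional edges in total: each exceptional edge lives inside a discovered set of polylogarithmic size over polylogarithmically many time-steps and so occurs with probability $O(\mathrm{polylog}(n)\cdot n^{-1/2})$, whence five of them cost $O(\mathrm{polylog}(n)\cdot n^{-5/2})=o(n^{-2})$ --- just enough to survive the union bound over $\binom{n}{2}$ starting pairs (four would give only $n^{-2}$, which does not). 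This constant bound is what terminates the compounding after five rounds of child processes and produces the tower $(\ldots((k_0)^{2^7})^{2^7}\ldots)$ leading to $2^{31}$. Second, and relatedly, an exceptional edge does not merely inflate one step's offspring: it can permanently give some active pair \emph{three} common neighbors among discovered vertices, and then the correct offspring is $\binom{Z+3}{2}-3$, whose mean is $1+\lambda^2>1$ near criticality --- so your dominating process is no longer subcritical. Your $X=\binom{Z+O(1)}{2}$ is only subcritical if the $O(1)$ is exactly $2$; the paper enforces this via the invariant $(\star)$ (every active pair has $0$ or $2$ common neighbors in the explored graph) and restores it after each exceptional phase by restarting fresh, independent child branching processes from the newly created pairs. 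Without both ingredients --- the constant bound on exceptional edges and the mechanism preserving subcriticality after each one --- the proof does not go through.
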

\noindent Since the order of the support of
the largest component in $\noninduced$ is monotone non-decreasing with
respect to the addition of edges to $\Gamma$, we may assume in the
remainder of this section that $p(n)=\lambda n^{-1/2}$.  Further,
since $\lambda<\lambda_c$ is fixed, there exists a constant
$\varepsilon>0$ such that for a binomially distributed random variable
$Z\sim \mathrm{Binom}(n, p^2)$ we have
\begin{align}\label{eq: subcritical assumption}
	\mathbb{E}\left(\binom{Z+2}{2}-1\right)=1-\varepsilon.
\end{align}
With this last equality in hand, we are now ready to present and
analyse the exploration process that lies at the heart of our proof of
Theorem~\ref{thm:strong_no_giant}.

We shall discover a superset of the component of
$\noninduced$ which contains sine fixed pair $v_1v_2\in V_\Gamma^{(2)
}$. We begin our exploration by finding common neighbors of $v_1$ and $v_2$, then adding all pairs of such newly discovered vertices to a set of unexplored pairs. These new pairs obviously lie in the same component of $\noninduced$ as $v_1v_2$.

After this initial step in the exploration, we proceed as follows.
First, we choose a new pair $a_t$ from our set of unexplored
pairs.  By assumption, there exists a  previously explored pair $b_t$
such that all $4$ edges from $a_t$ to $b_t$ are present.  We continue
our exploration by finding the set $Z_t$ of common neighbors of the
vertices in $a_t$ among the \emph{previously undiscovered} vertices of
$\Gamma$.  We then add all pairs $\left(Z_t\cup
b_t\right)^{(2)}\setminus \{b_t\}$ to our set of active pairs ---
these obviously lie in the same component of $\noninduced$ as $a_t$ --- and
delete $a_t$ from that set.  We then repeat the procedure, choosing a
new unexplored pair $a_{t+1}$, finding its common neighbors among
undiscovered vertices, etc.

This, however, is not enough to discover the totality of the component
of $v_1v_2$ in $\noninduced$.  Indeed, it is possible that the pair $a_t$ has
additional common neighbors among already discovered vertices (in
addition to the two neighbors in $b_t$), which could give rise to
additional, unexplored pairs that lie in the same component of $\noninduced$ as
$a_t$.  To deal with this possibility, we have to add in an
exceptional phase in our exploration, which takes care of
potential additional edges among the vertices we have discovered.  In this exceptional phase, we
generously overestimate how many new unexplored pairs could be
discovered, and for each of these new pairs we start new and essentially
independent versions of our exploration process, which we think of as children processes of our original exploration process.

The key is that, with the exceptional phase factored in, we do
discover a superset of the collection of all pairs in the same
component of $\noninduced$ as our starting pair $v_1v_2$.  We compare the
non-exceptional phase of our exploration to a subcritical branching
process and give upper bound on its total progeny, which is small.
Obtaining this bound is somewhat tricky (due to the nature of our
offspring distribution) and relies on a rather technical application of Dwass's
formula (Proposition~\ref{proposition: Dwass}).  The remainder of the
proof is provided in Lemma~\ref{lemma: probability of exceptional stop
is small} which shows that, with high probability, we do not run
through more than five exceptional phases.  In particular, we do not
start too many child processes, so that with high probability our
overall exploration process stops before we have discovered a large number of vertices.

\subsection{An exploration process}\label{subsection: subcritical exploration process}

Our exploration process will proceed by considering the following for 
each time $t\geq 0$:
\begin{itemize}
	\item \textbf{(Discovered vertices.)} An ordered set of 
	vertices: $D_t=\{v_1, v_2,
\ldots , v_{d_{t}}\}$. 

	\item \textbf{(Active pairs.)} A set of pairs of vertices (ordered
lexicographically with respect to the ordering on $D_t$): $A_t=\{x_1y_1, x_2y_2, \ldots ,
x_{a_t}y_{a_t}\}\subseteq D_t^{(2)}$.

	\item \textbf{(Discovered pairs.)} A set of pairs of vertices: $S_t\subseteq D_t^{(2)}$.

	\item \textbf{(Explored edge set.)} A set of edges: $E_t\subseteq 
	D_t^{(2)}\cap E(\Gamma)$.
	
	\item \textbf{(Epoch.)} An integer: $e_t \in \{0, 1, 2, 3, 4, 5\}$.
\end{itemize}

These sets will satisfy:
\begin{itemize}
	\item [($\star$)] for all $t\geq 0$ and for every active pair $x_iy_i\in A_t$, the vertices $x_i$ and $y_i$ have either $0$ or $2$ common neighbors in the graph $(D_t, E_t)$.
\end{itemize}

The initial state of the exploration consists of the following data, which is seeded 
by a choice of $v_1v_2$, an arbitrary pair of vertices from $V(\Gamma)$ (note 
that this pair can, alternatively, be thought of as a vertex of $\noninduced$).  
We set $D_0=\{v_1,v_2\}$, $A_0=S_0=\{v_1v_2\}$, $E_0=\emptyset$ and $e_0=0$. 

\medskip

%If we reorder this, be careful! These are ref'd later as global constants!
At each time step $t$ our exploration proceeds as follows, with $\varepsilon$ as given in equation~\eqref{eq: subcritical 
assumption}:
\begin{enumerate}[label=\textbf{\arabic*.}]
	\item\label{exploremain} 
	If $\vert D_t\vert<2^{2^{10}}\varepsilon^{-2^{10}}(\log
	n)^{2^{31}}$ and $A_t\neq\emptyset$, let $a=\pair{x}{y}$ be the
	first pair in $A_t$.  For each $z\in V(\Gamma)\setminus D_t$ we
	test whether or not $z$ sends an edge in $\Gamma$ to both vertices
	of $a$.  Set $Z_t:=\{z\in V(\Gamma)\setminus D_t:\
	\edge{z}{x},\edge{z}{y}\in E(\Gamma)\}$ and $F_t$ to be the
	collection of joint neighbors of $x$ and $y$ in $(D_t, E_t)$. 
	(Note, by property $(\star)$, the set $F_{t}$ is  either 
	empty or consists of a pair of discovered vertices.)
	
	We arbitrarily order the vertices in $Z_t$ as $\{v_{d_t+1},
	v_{d_t+2}, \ldots, v_{d_{t+1}}\}$, and add them to $D_t$ to form
	$D_{t+1}$.  We then set \[A_{t+1}= \left(A_t\cup \left(F_t\cup
	Z_t\right)^{(2)}\right)\setminus\left(a\cup F_t^{(2)} \right)\]
	to be the new collection of active pairs (which again is ordered
	lexicographically with respect to the ordering on $D_t$), set
	$E_{t+1}=E_t\cup \{zx, zy: \ z\in Z_t\}$, set $e_{t+1}=e_t$,
	$S_{t+1}=S_t\cup A_{t+1}$ and then proceed to the next time step
	$t+1$ of the process. Note that since the only new edges being 
	added are ones connecting a new vertex to $x$ and $y$ and since 
	$xy\notin A_{t+1}$ each pair in $A_{t+1}$ has either $0$ or $2$ 
	common neighbors in $(D_{t+1}, E_{t+1})$, i.e., property ($\star$) is still satisfied in the next time-step.	

	\item\label{explorelarge} If $\vert D_t\vert \geq 2^{2^{10}}\varepsilon^{-2^{10}}(\log
n)^{2^{31}}$, then we terminate the process and declare \emph{large stop}.
\item\label{exploreexcept} If $\vert D_t\vert <  2^{2^{10}}\varepsilon^{-2^{10}}(\log n)^{2^{31}}$ and $A_t= \emptyset$, then we consider $i=\vert E(\Gamma[D_t])\setminus E_t\vert$. 

If $i=0$ or $e_t+i\geq 5$, then we terminate our exploration process and declare \emph{extinction stop} or \emph{exceptional stop}, respectively. \\Otherwise we set $e_{t+1}=e_t+i$, update $E_t$ by setting $E_{t}=E(\Gamma[D_t])$, and set $i_1=i$ (which by assumption is $>0$). We then run the following subroutines:

\begin{enumerate}[label=\textbf{3\Alph*.}]\item\label{subA} 
	Set
\[Z^1_t:=\{z\in V(\Gamma)\setminus D_t: \ z \textrm{ sends at least three edges into }D_t\}.\]
We then update our value of $i_1$, setting $i_1= \vert Z^1_{t}\vert$. 
\begin{itemize}
	\item If $i_1>0$ and $e_{t+1}+i_1>5$, then we terminate the whole exploration process and declare \emph{exceptional stop}.
	\item Else if $i_1>0$ and $e_{t+1}+i_1\leq 5$, we add $Z^1_t$ to $D_t$, update $E_t$ by setting $E_{t}=E(\Gamma[D_t])$, update $S_t$ by setting $S_t=D_t^{(2)}$.
	We then update $e_{t+1}$ to $e_{t+1}+i_1$ and run through subroutine \ref{subA} again.
	\item Otherwise $i_1=0$ and we proceed to subroutine \ref{subB}
\end{itemize}

\item\label{subB} Let
\[Z^2_t:=\{z\in V(\Gamma)\setminus D_t: \ z \textrm{ sends at least two edges into }D_t\},\]
Since subroutine \textbf{3A.} terminated with $i_1=0$ each vertex in $Z^2_t$ sends exactly two edges into $D_t$.  We set $D_{t+1}=D_t\cup Z^2_t$, and let $A_{t+1}$ consist of all pairs of vertices in $D_{t+1}$ containing at least one vertex of $Z^2_t$. Further, we set $S_{t+1}=S_t\cup A_{t+1}$.

Once this is done, we proceed to the next time-step $t+1$ in the overall exploration process, observing that property $(\star)$ has been preserved (since by construction every vertex in $Z^2_t$ has degree exactly two in $(D_{t+1}, E_{t+1})$).\end{enumerate}
\end{enumerate}

\begin{figure}[h]
\begin{minipage}[b]{.5\textwidth}
	\labellist
\small\hair 2pt
 \pinlabel {\textcolor{figgreen}{$D_t$}} [ ] at 7 8
 \pinlabel {\textcolor{figblue}{$F_t$}} [ ] at 12 40
 \pinlabel {\textcolor{figgreen}{$x$}} [ ] at 36 91
 \pinlabel {\textcolor{figgreen}{$y$}} [ ] at 36 9
 \pinlabel {\textcolor{figblue}{$f_{1}$}} [ ] at 27 51
 \pinlabel {\textcolor{figblue}{$f_{2}$}} [ ] at 46 51
 \pinlabel {\textcolor{figred}{$z_1$}} [ ] at 136 40
 \pinlabel {\textcolor{figred}{$z_2$}} [ ] at 153 40
\endlabellist
\centering
\includegraphics[scale=1.0]{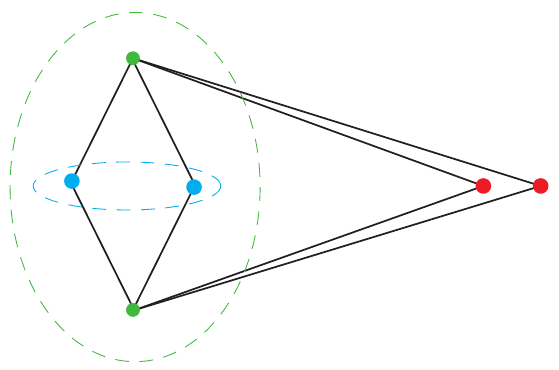}
\caption{An illustration of Stage~1 of the Exploration Process, with exploration 
from the active pair $a=xy$ at time $t$. In this example, $F_t=\{f_1,f_2\}$ and the set of newly discovered vertices is
$Z_t=\{z_1,z_2\}$. We thus have $A_{t+1}\setminus 
A_t=\{f_1 z_1, f_2 z_1, f_1 z_2, f_2 z_2, z_1z_2\}$ and $A_t\setminus A_{t+1}=\{xy\}$.
}
\label{fig:Figure_Process_1}
\end{minipage}
\hspace{-1cm}
\begin{minipage}[b]{.5\textwidth}

	\labellist
\small\hair 2pt
 \pinlabel {\textcolor{figblue}{$v_{1}$}} [ ] at 119 99
 \pinlabel {\textcolor{figblue}{$v_{2}$}} [ ] at 119 18
 \pinlabel {\textcolor{figgreen}{$w_{1}$}} [ ] at 51 64
 \pinlabel {\textcolor{figgreen}{$w_{2}$}} [ ] at 154 64
 \pinlabel {\textcolor{figred}{$z_{1}$}} [ ] at 87 65
 \pinlabel {\textcolor{figred}{$z_{2}$}} [ ] at 124 3
\endlabellist
\centering
\includegraphics[scale=1.0]{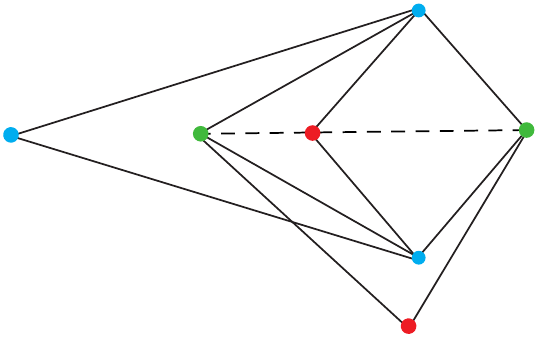}
\caption{Illustration of Stage~3 of the Exploration Process. Suppose the pairs
	$ w_{1}w_{2}, w_1z_1,w_2z_1\in S_t$ were discovered while $v_1v_2$ was active. When $w_1w_2$ is active in Stage~1 the pairs $\{v_1z_2, v_2z_2\}$ will be discovered. However the pair $z_1z_2$ will only be discovered in an instance of Stage~3, if the two dashed edges are revealed to be present (thus making $w_1z_1w_2z_2$ a square).
}
\label{fig:Figure_Process_2}
\end{minipage}
\end{figure}

\subsection{Analysing the process}\label{subsection: subcritical process analysis}
The exploration process defined in the previous subsection can terminate for one of three reasons:
\begin{enumerate}[label=(\arabic*)]
	\item $\vert D_t \vert \geq 2^{2^{10}}\varepsilon^{-2^{10}}(\log n)^{2^{31}}$  (\emph{large stop});	
	\item $e_t\geq 5$  (\emph{exceptional stop});
	\item $A_t=\emptyset$ and $E_t=\Gamma[D_t]$ (\emph{extinction stop}).
\end{enumerate}
It follows from the above that the process must in fact terminate
within $O\left((\log n)^{2^{32}}\right)$ time-steps.  We begin our
analysis by noting that, given our aim of proving
Theorem~\ref{thm:no_giant}, extinction stops are good for us:

\begin{lem}\label{lemma: good explorations give small components}
Suppose that the exploration from $\pair{v_1}{v_2}$ terminates at time
$T$ with an extinction stop.  Let $C$ be the component of $\noninduced$
containing $\pair{v_1}{v_2}$.  Then $C\subseteq S_T$.  Furthermore, the 
number of vertices in the 
support of $C$ is at most 
$2^{2^{10}}\epsilon^{-2^{10}}\left(\log{n}\right)^{2^{31}}$
and 
$\vert
C \vert \le
2^{2^{11}}\epsilon^{-2^{11}}\left(\log{n}\right)^{2^{32}}$.
\end{lem}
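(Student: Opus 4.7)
The plan is to establish the set-inclusion $C\subseteq S_T$ as a correctness statement for the exploration, and then derive the two size bounds from it. I would prove $C\subseteq S_T$ by induction on the graph distance $d$ in $\noninduced$ from $\pair{v_1}{v_2}$ to a pair $pq\in C$: the base case $d=0$ holds since $\pair{v_1}{v_2}\in S_0\subseteq S_T$; for the inductive step, suppose $p'q'\in C\cap S_T$ is a $\noninduced$-neighbor of $pq$, so $p,q$ are both common neighbors of $p'$ and $q'$ in $\Gamma$, and one must deduce $pq\in S_T$. The extinction hypothesis provides two key facts: $A_T=\emptyset$, so every pair ever placed into $A$ has been processed in some Step~\ref{exploremain} iteration; and $E_T=E(\Gamma[D_T])$, so all edges of $\Gamma$ among discovered vertices have been recorded by time~$T$.

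Distinguishing cases based on how $p'q'$ entered $S$: either (a) $p'q'$ was placed into $A$ by a Step~\ref{exploremain} update of the form $(F_s\cup Z_s)^{(2)}$ or by a Step~\ref{subB} update, in which case it was subsequently processed at some time $s'\leq T$; or (b) $p'q'$ entered $S$ directly through the bulk assignment $S_t\leftarrow D_t^{(2)}$ of Step~\ref{subA}. In case (a), when $p'q'$ is processed, the common neighbors of $\{p',q'\}$ outside $D_{s'}$ are collected into $Z_{s'}\subseteq D_{s'+1}\subseteq D_T$, and each pair of elements of $F_{s'}\cup Z_{s'}$ is inserted into $A_{s'+1}\subseteq S_{s'+1}$; the invariant $(\star)$ guarantees $|F_{s'}|\in\{0,2\}$, so that $F_{s'}^{(2)}$ (the unique pair removed from the update, if nonempty) is by design already in $S_{s'}$. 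This directly places $pq$ into $S_{s'+1}$ unless one or both of the edges $\{p,p'\},\{p,q'\},\{q,p'\},\{q,q'\}$ is still unrecorded in $E_{s'}$. Such unrecorded edges are exceptional edges of $\Gamma[D]\setminus E$, which by the extinction condition must be uncovered in some later Step~\ref{exploreexcept} iteration; once uncovered, Step~\ref{subA} performs the bulk update $S_t\leftarrow D_t^{(2)}$ whenever it adds a new vertex of valency $\geq 3$ into $D_t$, capturing $pq$ directly if $p,q\in D_t$, while Step~\ref{subB} subsequently sweeps in every vertex of $V\setminus D_t$ with valency exactly $2$ into $D_t$ and places all containing pairs into $A_{t+1}\subseteq S_{t+1}$. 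Case (b) is handled analogously: either $p,q\in D_t$ at the moment of the bulk assignment (in which case $pq$ is captured immediately) or at least one of $p,q$ is a vertex of $V\setminus D_t$ with valency $\geq 2$ into $D_t$ (and is swept in by Step~\ref{subA} or Step~\ref{subB}).

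For the bound on $|\supp(C)|$, observe that $S_t\subseteq D_t^{(2)}$ at every time $t$, so the just-established inclusion $C\subseteq S_T$ gives $\supp(C)\subseteq D_T$; since the process reached an extinction stop rather than a large stop, Step~\ref{explorelarge} never fired, and thus $|D_T|<2^{2^{10}}\epsilon^{-2^{10}}(\log n)^{2^{31}}$. For the bound on $|C|$,
\[|C|\leq |S_T|\leq \binom{|D_T|}{2}<\tfrac{1}{2}\bigl(2^{2^{10}}\epsilon^{-2^{10}}(\log n)^{2^{31}}\bigr)^{2}=\tfrac{1}{2}\cdot 2^{2^{11}}\epsilon^{-2^{11}}(\log n)^{2^{32}}\leq 2^{2^{11}}\epsilon^{-2^{11}}(\log n)^{2^{32}}.\]
The main technical obstacle will be the detailed bookkeeping in the inductive step for $C\subseteq S_T$, specifically verifying that every pair of $C$ arising only via an exceptional edge of some $\Gamma[D_s]\setminus E_s$ is nonetheless captured by the Step~\ref{exploreexcept}--\ref{subB} machinery before time~$T$. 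Invariant $(\star)$ plays an essential role in making this bookkeeping tractable, by restricting the possible shape of $F_t$ at every Step~\ref{exploremain} iteration.
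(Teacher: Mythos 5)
Your proposal is correct and follows essentially the same route as the paper's proof: an induction propagating membership in $S_T$ along edges of $\noninduced$, a case analysis according to whether a pair was discovered in Step~\ref{exploremain}, \ref{subA}, or \ref{subB}, the use of invariant $(\star)$ to reduce the problematic case to an exceptional edge that the extinction hypothesis forces to be uncovered in a later Step~\ref{exploreexcept} pass, and finally the bounds $\supp(C)\subseteq D_T$ and $|C|\le|S_T|\le\tfrac12|D_T|^2$ combined with the fact that no large stop occurred.
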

\begin{proof}
We perform our exploration process from the pair $v_1v_2$, and assume it terminates with an extinction stop at time $T$. It is enough to show that given $a=u_1u_2\in C\cap S_t$, for every neighbor $b=w_1w_2$ of $a$ in $\noninduced$, there is some $t'\leq T$ such that $b\in S_{t'}$.

If $a$ was discovered at a time-step where \ref{exploremain} applied  or if $a\in A_0$, then $a\in A_{t}$ and was an active pair at some time $t\geq 0$. Thus, at some later time-step $t'$ where \ref{exploremain} applies, our exploration process selects $a$ as its ``exploration pair'' and discover all neighbors $z_1z_2$ of $a$ in $\noninduced$ with $z_1z_2\in \left(Z_{t'}\cup F_{t'}\right)^{(2)}$ --- where $F_{t'}$ is the pair we used to discover $a$, and $Z_{t'}$ is the collection of joint neighbors of $u_1$ and $u_2$ that lie in  $V(\Gamma)\setminus D_{t'}$. If $b$ is in this set, then $b\in S_{t'}$.

Otherwise, if we failed to find $b=w_1w_2$ at this time $t'$,  $b$ must contain at least one vertex from $D_{t'}\setminus F_{t'}$. By property $(\star)$ of our exploration, at least one of the edges $u_iw_j$, $i,j\in \{1,2\}$ lies outside $E_{t'}$.

In particular, since we do not end with a large or exceptional stop,
this edge will be uncovered at later time step $t''$ where \ref{subA}
applies.  But by the end of \ref{subB}, all vertices sending at least
two edges into $D_{t''}$ have been added to $D_{t''}$.  Thus all
common neighbors of $a$ will be found (since $a\subseteq D_{t''}$, and
a common neighbor of $a$ has at least two neighbors in $D_{t''}$).
Hence after the updates $b\subseteq D_{t''}$.  There are two options: if both vertices
of $b$ are present after \ref{subA}, then $b\in S_{t''}$, since after
\ref{subA} all possible pairs of discovered vertices (not already
tested) are added to $S_{t''}$.  Otherwise, both vertices of $b$ are
present after \ref{subB}, and since at least one of them was
discovered in \ref{subB}, $b$ is added to $A_{t''+1}\subseteq
S_{t''+1}$, and we are done again. 

If on the other hand $a$ was discovered at a step $t$ where \ref{subB} applies, then $a$ is added to $A_{t+1}$, and the above applies. Finally, if $a$ was discovered at a time step $t$ where \ref{subA} applies, then in \ref{subA} and \ref{subB}, all common neighbors of $a$ are added to $D_t$, and all such pairs are added to $S_{t}$ or $S_{t+1}$.

Either way, since $S_t\subseteq S_{T}$ for all $t\leq T$, we see that
$b\in S_T$.  Thus every neighbor of $a$ in $\noninduced$ is eventually
discovered by our exploration process, and $C\subseteq S_T$ as
claimed.  Further, since $S_T\subseteq D_T^{(2)}$ by construction, and since 
our exploration ends with an extinction
stop by
the hypothesis of the lemma, we have $\vert C\vert \leq
\frac{1}{2}\vert D_T \vert^2 < 2^{2^{11}}\varepsilon^{-2^{11}}(\log
n)^{2^{32}}$ as claimed.
\end{proof}

We now turn to the technical crux of the analysis.

\begin{lem}\label{lemma: subcritical, domination}
If we are at a time-step $t$ of the process where \ref{exploremain} applies, then given the past history of the process, the random variable  $\vert A_{t+1}\setminus A_t\vert$ counting the number of new active pairs discovered by $a_1=x_1y_1$ is stochastically dominated by a random variable $X=\frac{Z^2+3Z}{2}$, where $Z$ is a binomial random variable with parameters $n$ and $p^2$.
\end{lem}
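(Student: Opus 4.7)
The plan is to establish the bound in two steps: a deterministic combinatorial bound on $|A_{t+1}\setminus A_t|$ in terms of $|Z_t|$, and then a probabilistic bound on $|Z_t|$ itself. For the combinatorial step, from the update rule
\[A_{t+1}=\bigl(A_t\cup (F_t\cup Z_t)^{(2)}\bigr)\setminus \bigl(a\cup F_t^{(2)}\bigr),\]
every pair in $A_{t+1}\setminus A_t$ lies in $(F_t\cup Z_t)^{(2)}\setminus F_t^{(2)}$. Since $F_t\subseteq D_t$ and $Z_t\subseteq V(\Gamma)\setminus D_t$ are disjoint, and since property $(\star)$ forces $|F_t|\in\{0,2\}$, a short case analysis yields
\[|A_{t+1}\setminus A_t|\le \binom{|Z_t|+|F_t|}{2}-\binom{|F_t|}{2}\le \binom{|Z_t|+2}{2}-1=\frac{|Z_t|^2+3|Z_t|}{2},\]
with the case $|F_t|=0$ giving the even smaller quantity $\binom{|Z_t|}{2}$.

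For the probabilistic step, I would argue that $|Z_t|$, conditional on the past history of the exploration, is stochastically dominated by $Z\sim\mathrm{Binom}(n,p^2)$. The set $Z_t$ collects those $z\in V(\Gamma)\setminus D_t$ for which both $zx_1$ and $zy_1$ are edges of $\Gamma$. Across different $z$ these pairs of potential edges are disjoint, so the indicators of the events $\{z\in Z_t\}$ are conditionally independent given the past. A coupling via the principle of deferred decisions, in which each joint-neighbor test draws fresh independent $\mathrm{Bernoulli}(p)$ variables for $zx_1$ and $zy_1$, then gives $|Z_t|\preceq\mathrm{Binom}(n-|D_t|,p^2)\preceq Z$.

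Combining the two bounds with the monotonicity of $f(z):=(z^2+3z)/2$ on $\mathbb{Z}_{\ge 0}$ yields the required stochastic domination
\[|A_{t+1}\setminus A_t|\preceq f(|Z_t|)\preceq f(Z)=X.\]

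The main technical obstacle lies in the probabilistic step: earlier joint-neighbor tests involving $x_1$ or $y_1$ as an endpoint of a different active pair may have partially exposed some of the edges $zx_1$ or $zy_1$ for certain $z\in V(\Gamma)\setminus D_t$. The coupling must therefore be set up so that any such partial exposure is consistent with the assumption $z\notin D_t$, which restricts the previously-revealed outcomes enough to preserve the $p^2$ bound on the conditional probability that $z$ joins $Z_t$; this is the point at which some care is needed in the bookkeeping for the deferred-decisions argument.
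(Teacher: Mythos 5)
Your combinatorial step is correct and matches the paper: property $(\star)$ gives $\vert F_t\vert\in\{0,2\}$, so $\vert A_{t+1}\setminus A_t\vert\le\binom{\vert Z_t\vert+2}{2}-1=\frac{\vert Z_t\vert^2+3\vert Z_t\vert}{2}$, and monotonicity of $z\mapsto(z^2+3z)/2$ transfers any stochastic domination of $\vert Z_t\vert$ by $Z$ to the desired conclusion. Your observation that the indicators of $\{z\in Z_t\}$ for distinct $z\in V(\Gamma)\setminus D_t$ are conditionally independent given the history (the conditioning factorizes over disjoint sets of edges incident to the different $z$) is also fine.

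The gap is in the mechanism you propose for the per-vertex bound $\mathbb{P}(z\in Z_t\mid \textrm{history})\le p^2$. A deferred-decisions coupling that ``draws fresh independent $\mathrm{Bernoulli}(p)$ variables for $zx_1$ and $zy_1$'' is precisely what is not available here: as you note yourself, these edges may have been partially exposed by earlier tests (a failed test of a pair $x_1y'$ reveals the event ``not both $zx_1,zy'\in E(\Gamma)$''), so conditional on the history the pair $(\mathbf{1}_{zx_1\in E(\Gamma)},\mathbf{1}_{zy_1\in E(\Gamma)})$ is not a product of fresh $\mathrm{Bernoulli}(p)$'s; moreover the constraints bearing on $zx_1$ and those bearing on $zy_1$ can share auxiliary edges $zw$, so one cannot handle the two coordinates separately by an elementary conditional computation. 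Saying that the partial exposure ``restricts the previously-revealed outcomes enough to preserve the $p^2$ bound'' asserts the conclusion rather than proving it. The missing idea --- the one the paper uses, following Bollob\'as--Riordan --- is a correlation inequality: the history is the intersection of the increasing event $\{E_t\subseteq E(\Gamma)\}$ (which involves only edges inside $D_t$ and is therefore independent of the edges at $z$) with a decreasing event, namely an intersection of events of the form ``at least one of $zx,zy$ is absent''. Harris's inequality then shows that conditioning on a decreasing event cannot increase the probability of the increasing event $\{zx_1,zy_1\in E(\Gamma)\}$, giving the bound $p^2$ cleanly. With that substitution your argument goes through; without it, the probabilistic step is unproved at its crux.
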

\begin{proof}
As observed by Bollob\'as and Riordan~\cite[Inequality (3)]{BollobasRiordan:cliquepercolation},  the past of the exploration is the intersection of the principal increasing event $\mathcal{U}_t=\{E_t\subseteq E(\Gamma)\}$ (corresponding to the edges that we have discovered are present in $\Gamma$) and a decreasing event $\mathcal{D}$ (corresponding to the intersection of a number of events of the form ``at least one of $zx, zy$ is not in $E(\Gamma)$''). In particular appealing to Harris's Lemma~\cite{Harris60}, the conditional probability that $x$ and $y$ both send an edge to $z\in V(\Gamma)\setminus D_t$ in $\Gamma$ given the history is at most the unconditional probability $p^2$. (We note here that the fact $z\notin D_t$ is essential --- we have no control over the conditional probabilities of edges inside the set of discovered vertices $D_t$.)

Thus conditional on the past history of the exploration process, the distribution of $\vert Z_t\vert$ is stochastically dominated by a random variable $Z\sim \mathrm{Binom}(n, p^2)$. The Lemma then immediately follows from the definition of our exploration process. (Note that here we are using the fact property ($\star$) is maintained throughout our exploration.)
\end{proof}
\begin{lem}\label{lemma: bound on Z sim Binom(n, p^2)}
Let $Z\sim \mathrm{Binom}(n,p^2)$ and $k\in \mathbb{N}$. Then $\mathbb{P}(Z\geq  9 \log n + 9\log k)\leq n^{-5}k^{-6}$.
\end{lem}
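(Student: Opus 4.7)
The plan is a quick Chernoff-type tail bound; no genuine obstacle is expected here, since the mean of $Z$ is a bounded constant while the target deviation $9\log n + 9\log k$ grows without bound. First I would record that, since $p(n) = \lambda n^{-1/2}$ with $\lambda < \lambda_c < 1$ a fixed constant, the mean of $Z \sim \mathrm{Binom}(n,p^2)$ satisfies
\[
\mathbb{E}[Z] \;=\; n p^2 \;=\; \lambda^2,
\]
an absolute constant, which in particular is strictly less than $1$.

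Next I would apply the crude but sufficient union-bound tail inequality
\[
\mathbb{P}(Z \geq t) \;\leq\; \binom{n}{t}\bigl(p^2\bigr)^t \;\leq\; \left(\frac{en p^2}{t}\right)^{\!t} \;=\; \left(\frac{e\lambda^2}{t}\right)^{\!t},
\]
valid for every integer $t \geq 1$, which follows from $\binom{n}{t} \leq (en/t)^t$. This is the standard first-moment Chernoff bound for the binomial.

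I would then substitute $t = \lceil 9\log n + 9\log k\rceil \geq 9\log(nk)$. Because $\lambda$ is a fixed constant while $t \to \infty$ with $n$ (uniformly in $k \geq 1$), for all $n$ sufficiently large—specifically, once $9\log n \geq e^{7}\lambda^2$, a threshold depending only on $\lambda$—one has $t \geq e^{7}\lambda^2$, whence $e\lambda^2/t \leq e^{-6}$. Plugging this in yields
\[
\mathbb{P}(Z \geq t) \;\leq\; e^{-6t} \;\leq\; e^{-54\log(nk)} \;=\; (nk)^{-54}.
\]
Since $(nk)^{-54} \leq n^{-5}k^{-6}$ trivially for all $n,k \geq 1$, the claimed bound follows.

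The only thing requiring a bit of care is tracking the constants to confirm that the threshold on $n$ depends only on $\lambda$ (and not on $k$); this is immediate from the computation above because the factor $\log k$ only helps, never hurts. The final estimate is extremely wasteful compared to what Chernoff actually provides, but this is by design: the lemma is a clean, uniform-in-$k$ upper bound that will feed into the more delicate application of Dwass's formula later in this section.
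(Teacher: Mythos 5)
Your proof is correct and follows essentially the same elementary first-moment route as the paper: both bound the binomial upper tail by $\binom{n}{t}(p^2)^t$-type expressions and exploit that the deviation $t=\lceil 9\log n+9\log k\rceil$ dwarfs the constant mean $np^2=\lambda^2$. The only substantive differences are cosmetic: the paper bounds each tail term by $\lambda_c^{2r}$ and uses the numerical fact $18\log\lambda_c<-6$, so its estimate holds for all $n\ge 2$, whereas your use of $\binom{n}{t}\le (en/t)^t$ with the generous threshold $t\ge e^7\lambda^2$ forces a (large but absolute) lower bound on $n$ not present in the lemma's statement --- harmless in this asymptotic context, but worth flagging, and easily removed by choosing a sharper cutoff than $e\lambda^2/t\le e^{-6}$.
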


\begin{proof}
Recall that in this section, $p=\lambda n^{-1/2}$, for some constant $\lambda<\lambda_c$. Since $Z\sim\mathrm{Binom}(n,p^2)$, we have:
\begin{align*}\mathbb{P}\Bigl(Z\geq 9(\log n+\log k)\Bigr)&=\sum_{r=\lceil 9(\log n+\log k)\rceil }^{n} \binom{n}{r}p^{2r}(1-p^2)^{n-r}
< \sum_{r=\lceil 9(\log n+\log k)\rceil }^n n^r\left(\lambda_cn^{-1/2}\right)^{2r}\\
& = \sum_{r=\lceil 9(\log n+\log k)\rceil}^n\left(\lambda_c\right)^{2r}< n\lambda_c^{2\lceil 9(\log n+\log k)\rceil}\leq n\exp\left(18\log \lambda_c (\log n +\log k)\right), 
\end{align*}
where in the last two inequalities we used the fact $(\lambda_c)^2=\sqrt{6}-2<1$. Since 	$18\log\lambda_c< -6$, this immediately gives us the desired bound
$$\mathbb{P}\left(Z\geq 9\log n+9\log k\right)< n e^{-6\log n-6\log k}=n^{-5}k^{-6}.$$
\end{proof}	

\begin{cor}\label{corollary: bound on pairs having many common neighbors}
$\mathbb{P}\left(\exists \ \pair{x}{y}\in V(\Gamma)^{(2)} \textrm{ : }\vert \Gamma_x\cap \Gamma_y\vert \geq 9\log n\right)\leq n^{-3}$. 	
\end{cor}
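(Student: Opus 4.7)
The plan is to reduce the statement to a union bound over pairs. For any fixed pair $xy \in V(\Gamma)^{(2)}$, the random variable $|\Gamma_x \cap \Gamma_y|$ counts those vertices of $V(\Gamma) \setminus \{x,y\}$ sending edges to both $x$ and $y$ in $\Gamma$. Each of the $n-2$ candidate vertices lies in this intersection independently with probability exactly $p^2$, so $|\Gamma_x \cap \Gamma_y| \sim \mathrm{Binom}(n-2, p^2)$, which is stochastically dominated by $Z \sim \mathrm{Binom}(n, p^2)$.

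Next I would invoke Lemma~\ref{lemma: bound on Z sim Binom(n, p^2)} with $k=1$, which yields
\[
\mathbb{P}\bigl(|\Gamma_x \cap \Gamma_y| \geq 9 \log n\bigr) \leq \mathbb{P}(Z \geq 9 \log n) \leq n^{-5}.
\]

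Finally I would take a union bound over the at most $\binom{n}{2} \leq n^2/2$ pairs $xy \in V(\Gamma)^{(2)}$ to obtain
\[
\mathbb{P}\bigl(\exists \, xy \in V(\Gamma)^{(2)} : |\Gamma_x \cap \Gamma_y| \geq 9 \log n \bigr) \leq \binom{n}{2} \cdot n^{-5} \leq \tfrac{1}{2} n^{-3} < n^{-3},
\]
as required. There is no genuine obstacle here: the corollary is a direct consequence of the single-pair tail bound of Lemma~\ref{lemma: bound on Z sim Binom(n, p^2)} together with the observation that the exponent $-5$ leaves ample slack for a union bound over pairs of vertices.
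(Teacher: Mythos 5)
Your proposal is correct and takes essentially the same approach as the paper: identify $|\Gamma_x\cap\Gamma_y|$ as $\mathrm{Binom}(n-2,p^2)$, dominate it by $Z\sim\mathrm{Binom}(n,p^2)$, apply Lemma~\ref{lemma: bound on Z sim Binom(n, p^2)} with $k=1$ to get an $n^{-5}$ tail bound, and finish with a union bound over the $O(n^2)$ pairs.
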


\begin{proof}
Fix $\pair{x}{y}\in V(\Gamma)^{(2)}$. By Lemma~\ref{lemma: bound on Z sim Binom(n, p^2)} with $k=1$, we have
\begin{align*}
\mathbb{P}\left(\vert \Gamma_x\cap \Gamma_y\vert\geq 9\log n\right)=\sum_{r=\lceil 9\log n\rceil }^{n-2} \binom{n-2}{r}p^{2r}(1-p^2)^{n-2r}< \mathbb{P}\left(Z\geq 9 \log n\right)\leq n^{-5}.
\end{align*}
Taking a union bound over all $\binom{n}{2}<n^2$ possible choices of the pair $xy$, the lemma follows.
\end{proof}

We now analyse the total progeny of the Galton--Watson branching process with offspring distribution given by the random variable $X$ from the statement of
Lemma~\ref{lemma: subcritical, domination}. By~\eqref{eq: subcritical assumption}, $\E X=1-\epsilon$ and thus the 
branching process is subcritical.  Unfortunately, combining the Markovian
bound on the extinction time from Proposition~\ref{proposition: basic
branching processes}(i), with the bounds on the maximum
degree in $\noninduced$ from Corollary~\ref{corollary: bound on pairs having
many common neighbors} does not give us sufficiently good control on the extinction time and total progeny of our Galton--Watson process.  Thus we turn to an application of Dwass's formula to obtain the tighter bounds needed for the proof of
Theorem~\ref{thm:no_giant}.

\begin{lem}\label{lemma: total progeny}
Let $\mathbf{W}= \left(W_t\right)_{t\in \mathbb{Z}_{\geq 0}}$ be a Galton--Watson branching process with an offspring distribution $X$ as in Lemma~\ref{lemma: subcritical, domination}. Set $k_0 = 2^{26}\varepsilon^{-2}(\log n)^5$. Then $\mathbf{W}$ is subcritical, and its total progeny $W=\sum_{t=0}^{\infty}W_t$ satisfies
\[ \mathbb{P}\left(W \geq k_0 \right)=O\left(n^{-5}\right).\]
\end{lem}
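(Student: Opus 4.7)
My plan is to apply Dwass's formula (Proposition~\ref{proposition: Dwass}) to reduce the bound on $\mathbb{P}(W \geq k_0)$ to a tail bound on the i.i.d.\ sums $S_k = X_1 + \cdots + X_k$, and then to handle each such tail bound via a truncation argument (controlled by Lemma~\ref{lemma: bound on Z sim Binom(n, p^2)}) followed by a Bernstein-type concentration inequality on the truncated sum. Subcriticality of $\mathbf{W}$ is immediate from~\eqref{eq: subcritical assumption}, which gives $\mathbb{E}X = 1 - \varepsilon$; and Dwass's formula combined with the crude bound $\mathbb{P}(S_k = k-1) \leq \mathbb{P}(S_k \geq k-1)$ reduces the task to proving
\[\sum_{k \geq k_0} \frac{1}{k}\mathbb{P}(S_k \geq k-1) = O(n^{-5}).\]

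The main technical difficulty is that $X = (Z^2 + 3Z)/2$ is a quadratic function of a binomial, hence heavy-tailed, so Chernoff-style moment-generating-function techniques do not yield useful bounds on $S_k$ directly. I would handle this by truncating at the level $M_k := \tfrac12(9\log n + 9\log k)^2 + \tfrac32(9\log n + 9\log k)$: Lemma~\ref{lemma: bound on Z sim Binom(n, p^2)} gives $\mathbb{P}(X \geq M_k) \leq n^{-5}k^{-6}$, so the probability of the exceptional event $\mathcal{E}_k = \{\exists\, i \leq k : X_i \geq M_k\}$ is at most $n^{-5}k^{-5}$ by a union bound. On the complement $\mathcal{E}_k^c$, the sum $S_k$ coincides with its truncated analogue $S'_k := \sum_{i=1}^k X_i \mathbf{1}[X_i < M_k]$.

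The truncated variables $X'_i$ are bounded by $M_k$, satisfy $\mathbb{E}X'_i \leq 1 - \varepsilon$, and have $\mathrm{Var}(X'_i) \leq M_k \mathbb{E}X'_i \leq M_k$. A standard application of Bernstein's inequality then yields
\[\mathbb{P}(S'_k \geq k-1) \leq \exp\!\left(-\frac{(k\varepsilon-1)^2/2}{kM_k + M_k(k\varepsilon-1)/3}\right) \leq \exp\!\left(-\frac{k\varepsilon^2}{C M_k}\right)\]
for an absolute constant $C > 0$, valid once $k$ is large enough that $k\varepsilon \geq 2$. Since $M_k = O((\log n + \log k)^2)$, the choice $k_0 = 2^{26}\varepsilon^{-2}(\log n)^5$ makes the exponent at most $-\Omega((\log n)^3)$ at $k = k_0$, after which it grows essentially linearly in $k$. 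Combining,
\[\mathbb{P}(W \geq k_0) \leq \sum_{k \geq k_0}\frac{1}{k}\left[\exp\!\left(-\Omega(k\varepsilon^2/M_k)\right) + n^{-5}k^{-5}\right].\]
The exceptional contribution sums to $O(n^{-5}k_0^{-5}) = O(n^{-5})$. For the Bernstein contribution I would split at $k = n^5$: on $k_0 \leq k \leq n^5$ one has $M_k = O((\log n)^2)$, so the exponent is $-\Omega((\log n)^3)$ uniformly, making this part of the sum super-polynomially small; on $k > n^5$ the exponent is $-\Omega(k/(\log k)^2)$, which is likewise super-polynomially small. Together these yield $\mathbb{P}(W \geq k_0) = O(n^{-5})$, as required.
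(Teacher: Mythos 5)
Your proposal is correct and follows essentially the same route as the paper: Dwass's formula, truncation of $X$ at scale $O((\log n+\log k)^2)$ justified by Lemma~\ref{lemma: bound on Z sim Binom(n, p^2)}, a union bound contributing $n^{-5}k^{-5}$ for the event that some summand exceeds the truncation level, and an exponential concentration bound for the bounded part, summed over $k\geq k_0$. The only differences are cosmetic — you invoke Bernstein's inequality where the paper normalizes the truncated, centered variables to lie in $[-1,1]$ and applies the Chernoff bound of Alon--Spencer, and you make the split at $k=n^5$ explicit to control the $\log k$ term — so nothing of substance is changed.
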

\begin{proof}
Let $\{X_{k,j}: \ k \in \mathbb{N}, \ j \in [k]\}$ be an infinite family of independent, identically distributed copies of $X$. For each $k\in \mathbb{N}$ and every $j\in [k]$, write $X_{k,j}$ as $X_{k,j}= {X^a}_{k,j}+{X^b}_{k,j}$, where ${X^a}_{k,j} =\min \left(X_{k,j}, 2^8(\log n+ \log k)^2\right)$.  Set $\mu_{k}^a=\mathbb{E}({X^a}_{k,1})$. By construction, $\mu_k^a\le \E[X]=1-\varepsilon$.
Since $X=(Z^2+3Z)/2\leq 2Z^2$, where $Z\sim \mathrm{Binom}(n,p^2)$, and since $2\left(9( \log n + \log k)\right)^2< 2^8 (\log n+\log k)^2$, Lemma~\ref{lemma: bound on Z sim Binom(n, p^2)} implies that for every $k\in \mathbb{N}$ and $j\in [k]$,
\begin{align}\label{eq: bound on Pr X^b_{i,j} is nonzero}
\mathbb{P}\left({X^b}_{k,j}>0\right) \leq \mathbb{P}\left(2Z^2\geq 2^8(\log n +\log k)^2 \right)    \leq \mathbb{P}\left(Z\geq 9 \left(\log n+\log k\right)\right)\leq n^{-5}k^{-6}.
\end{align} 
Applying Dwass's formula, Proposition~\ref{proposition: Dwass},  to our branching process $\mathbf{W}$, we have that for any $k\in \mathbb{N}$,
\begin{align}\label{eq: bound on W=k part 1}
\mathbb{P}(W=k)&= \frac{1}{k}\mathbb{P}\left(\sum_{j=1}^k X_{k,j}=k-1\right) \notag\\
&\leq \frac{1}{k}\left(\mathbb{P}\left(\sum_{j=1}^k \frac{{X^a}_{k,j}-\mu_{k}^a}{2^8 (\log n+ \log k)^2}\geq \frac{k(1-\mu_{k}^a)-1}{2^8 (\log n+ \log k)^2}\right) + \mathbb{P}\left(\sum_{j=1}^k{X^b}_{k,j}>0\right)\right).
\end{align}
Since $\mu_k^a\le 1-\varepsilon$, for  $k\geq 2\varepsilon^{-1}$ we have that $2(1-\mu_k^a)-\frac{2}{k}>2(1-\mu_k^a)-\varepsilon>\varepsilon$ and hence $k(1-\mu_k^a)-1>\frac{\varepsilon k}{2}$. Thus we have
\[s:= \frac{k(1-\mu_{k}^a)-1}{2^8 (\log n+\log k)^2}>\frac{\varepsilon k}{2^9 (\log n+\log k)^2}:=s'.\]

Since the random variables $({X^a}_{k,j}-\mu_{k}^a)/(2^8(\log n+\log k)^2)$ are by construction independent random variables with mean zero and absolute value at most $1$, we can apply a standard Chernoff bound~\cite[Theorem A.1.16]{AlonSpencer:book} in~\eqref{eq: bound on W=k part 1} to obtain:

$$\mathbb{P}\left(\sum_{j=1}^k \frac{{X^a}_{k,j}-\mu_1^a}{2^8 (\log n)^2}\geq s\right)\leq e^{-\frac{(s')^2}{2k}}= e^{-\frac{\varepsilon^2 k}{2^{19}(\log n+\log k)^4}}.$$

 Letting, $k_0=k_0(n)=\lceil 2^{26}\varepsilon^{-2}(\log n)^5\rceil$ we have that for $n$ sufficiently large, all $k\geq k_0(n)$ satisfy $k\ge 5\varepsilon^{-2}\cdot2^{19}(\log{n}+\log{k})^5$. In particular for such $k$ we have

\begin{align}\label{eq: bound on the X_{i,1} summand}
\mathbb{P}\left(\sum_{j=1}^k \frac{{X^a}_{k,j}-\mu_k^a}{2^8 (\log n)^2}\geq s\right)\le e^{-\frac{\epsilon^2k}{2^{19}\left(\log n + \log k\right)^4}}\le e^{-5\left(\log n + \log k\right)}\le n^{-5}k^{-5}.\end{align}
Further, applying inequality~\eqref{eq: bound on Pr X^b_{i,j} is nonzero} and a union bound, we get that
\begin{align}\label{eq: bound on the X^b_{k,j} summand}
 \mathbb{P}\left(\sum_{j=1}^k{X^b}_{k,j}>0\right)\leq k \mathbb{P}({X^b}_{k,1}>0)\leq n^{-5}k^{-5}.
\end{align}
Combining~\eqref{eq: bound on W=k part 1}, \eqref{eq: bound on the X_{i,1} summand} and \eqref{eq: bound on the X^b_{k,j} summand}, we finally have
\begin{align*}
\mathbb{P}(W\geq k_0)&\leq \sum_{k=k_0}^{\infty}\mathbb{P}(W=k)\leq \sum_{k=k_0}^{\infty}\frac{1}{k}2n^{-5}k^{-5}= O\left( n^{-5}\right).
\end{align*}	
\end{proof}

We now use Lemma~\ref{lemma: total progeny} to estimate the probability that our exploration ends with a large stop. The key is to view our exploration as a branching process of branching processes: we begin with a subcritical branching process, corresponding to step \ref{exploremain} in our exploration process. Call this the ancestral branching process. When this process becomes extinct, we run through step \ref{exploreexcept} of our exploration process, potentially adding new active pairs to our otherwise empty set of active pairs $A_t$. For each of these new pairs, we start an independent child branching process. For each of these we repeat the same procedure as for the ancestral branching process (so the child processes can generate their own child processes, and so forth). Thus to bound the total number of pairs discovered over the entire course of our exploration, we must control the growth of this branching process of branching processes.
\begin{lem}\label{lemma: probability of a large stop is small}
	The probability that the exploration from $v_1v_2$ terminates with a large stop is $O(n^{-3})$.
\end{lem}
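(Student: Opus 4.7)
The strategy is to view the whole exploration as a nested family of Galton--Watson branching processes indexed by the epoch $e_t$: Step~\ref{exploremain} runs a single process until its set of active pairs is exhausted, at which point an exceptional phase (Step~\ref{exploreexcept}) either triggers a stop or spawns a set of new active pairs, each of which will serve as a root of an independent child process. Since each completed exceptional phase strictly increases $e_t$ by at least one and the epoch is capped at $5$, there are at most five exceptional phases and hence at most six branching processes to analyse.

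First I would condition on the event $\mathcal{E}$ from Corollary~\ref{corollary: bound on pairs having many common neighbors} that every pair of vertices of $\Gamma$ has at most $9\log n$ common neighbors, which holds with probability at least $1 - n^{-3}$. Under $\mathcal{E}$, every execution of Step~\ref{exploremain} adds at most $9\log n$ new vertices to $D_t$; further, in any exceptional phase every vertex of $Z^1_t \cup Z^2_t$ is a common neighbor of at least one pair in $D_t^{(2)}$, and a straightforward double count yields
\[
|Z^1_t| + |Z^2_t| \le \binom{|D_t|}{2}\cdot 9\log n.
\]
I would then apply Lemma~\ref{lemma: subcritical, domination} and Lemma~\ref{lemma: total progeny} to each branching process, together with a union bound over its initial roots, to conclude that the $k$-th process has total progeny at most $R_k \cdot k_0$ except with probability $O(R_k n^{-5})$, where $R_k$ denotes the number of initial roots and $k_0 = O((\log n)^5)$. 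Combined with the vertex count per birth coming from $\mathcal{E}$, this means the $k$-th process contributes at most $R_k \cdot k_0 \cdot 9\log n$ vertices to $D_t$.

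To close the induction, let $N_k$ denote $|D_t|$ at the moment the $k$-th branching process is initialised, with $N_0=2$. Using the estimates above, the number of new active pairs produced by the $(k+1)$-th exceptional phase is at most $(|Z^1_t|+|Z^2_t|)\cdot N_{k+1}$, which together with the recursion $N_{k+1} \le N_k + R_k\cdot k_0\cdot 9\log n + \binom{|D_t|}{2}\cdot 9\log n$ yields a bound of the form $N_{k+1}\le C\, N_k^{5}(\log n)^{C}$ for some absolute constant $C$. Iterating this recurrence at most five times starting from $N_0 = 2$ gives $N_5 = (\log n)^{O(1)}$, well below the large-stop threshold $2^{2^{10}}\varepsilon^{-2^{10}}(\log n)^{2^{31}}$. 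A final union bound over the failure of $\mathcal{E}$ together with the $O(1)$ applications of Lemma~\ref{lemma: total progeny} (each with $R_k = (\log n)^{O(1)}$ initial roots) gives a total failure probability of $O(n^{-3})$, as required. The main technical obstacle will be verifying that each child branching process is genuinely stochastically dominated by the offspring distribution of Lemma~\ref{lemma: subcritical, domination} despite being started after an exceptional phase that has already revealed additional edges within $D_t$; this should be handled by an iterated Harris-inequality argument exactly as in the proof of that lemma, since the relevant conditioning remains an intersection of an increasing event on edges already discovered and a decreasing event on edges to vertices outside $D_t$, preserving the key fact that common neighbors of any active pair in $V(\Gamma)\setminus D_t$ are dominated by $\mathrm{Binom}(n,p^2)$.
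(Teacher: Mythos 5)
Your proposal is correct and follows essentially the same route as the paper: both view the exploration as a branching process of branching processes, condition on the event that no pair has more than $9\log n$ common neighbours, invoke Lemma~\ref{lemma: total progeny} with a union bound to cap the total progeny of each constituent process at $k_0=O((\log n)^5)$, and iterate a polynomial recursion for $\vert D_t\vert$ through the at most five exceptional phases to stay below the large-stop threshold. The paper union-bounds more crudely over all $n^2$ possible child processes (still giving $O(n^{-3})$), and its per-phase recursion has a larger exponent because subroutine \textbf{3A} can itself iterate, but since the epoch cap bounds the total number of iterations this makes no difference to the conclusion.
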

\begin{proof}
We view our exploration process as a kind of branching process of 
branching processes, with parent processes begetting children processes as described at the start of this section. Beginning from the single pair $v_1v_2$, the 
exploration of its component in $\noninduced$ undertaken at time-steps $t$ 
when \ref{exploremain} applies is dominated by a subcritical branching 
process $\mathbf{W}$ as in the statement of Lemma~\ref{lemma: total 
progeny}. When that process terminates, we have discovered a certain 
set $D_{t_0}$ of vertices of $\Gamma$. If \ref{exploreexcept} 
applies, then we add a certain number of vertices to $D_{t_0}$ to 
form $D_{t_0+1}$ and then add a subset of the pairs from $D_{t_0+1}^{(2)}$ to form $A_{t_0+1}$. We may view each of the pairs $a_i$ added to $A_{t_0+1}$ at this time as the root of a subcritical independent branching process $\mathbf{W}_i$. There are at most $\vert D_{t_0+1}\vert^2$ of these ``child-processes'', and they are stochastically dominated by independent copies $\mathbf{W}_i$ of the subcritical branching process $\mathbf{W}$ from Lemma~\ref{lemma: total progeny}. When all these branching processes have become extinct, we now have a (larger) set $D_{t_1}$ of discovered vertices and we may be back at a time step where \ref{exploreexcept} applies. We repeat our procedure --- adding vertices to form $D_{t'+1}$, adding new pairs to form $A_{t_1+1}$, etc.  The whole procedure can begin again at most $5$ times (for otherwise an exceptional stop must have occurred).

We run our exploration ignoring large stops and only applying \ref{exploremain} and \ref{exploreexcept} until the process terminates (with an exceptional stop or an extinction stop), and show that if an exceptional stop does not occur then with probability $O(n^{-3})$ the final size of the discovered set of vertices is at most  $2^{2^{10}}\varepsilon^{-2^{10}}(\log n)^{2^{31}}$.

Since we never can consider more than  $\binom{n}{2}$ different active pairs, it follows that we never start more than $n^2$ branching processes $\mathbf{W}_i$ in the course of our exploration. By Lemma~\ref{lemma: total progeny}, and a union bound, the probability that one of our at most $n^2$ branching processes $\mathbf{W}_i$ has a total progeny of more than $k_0=2^{26}\varepsilon^{-2}(\log n)^5$ is $O(n^{-3})$. Assume from now on this does not happen. Since the progeny of our processes correspond to pairs $xy\in V(\noninduced)=V(\Gamma)^{(2)}$, none of our processes can add more than $2k_0$ vertices to the set of discovered vertices $D_t$.

Further, by Corollary~\ref{corollary: bound on pairs having many common neighbors}, the probability that there is any pair $xy\in V(\noninduced)=V(\Gamma)^{(2)}$ such that $\vert \Gamma_x\cap \Gamma_y\vert \geq 9 \log n$ is at most $n^{-3}$. Assume from now on this does not happen. Then in the first time-step $t_0$ where \ref{exploreexcept} applies, we can bound the number of vertices added to $D_{t_0}$: each pair $xy$ can contribute at most $9\log n$ vertices to a $Z^1_t$ or $Z^2_t$, and as we do not have an exceptional stop we can repeat the addition procedure at most $5$ times. In particular we have
\begin{align*}\vert D_{t_0+1}\vert &\leq\left(\left( \left(\left( \left(\left(\vert D_{t_0}\vert\right)^2 9 \log n\right)^29\log n\right)^2 9\log n)\right)^29\log n\right)^29\log n \right)^29\log n\\
& \leq \left(\vert D_{t_0}\vert  9 \left(\log n\right) \right)^{2^6}\leq\left(18 (\log n) k_0 \right)^{2^6}\ .\end{align*}
The number of child processes started at that time step is at most $\vert D_{t_0+1}\vert^2$; by our assumption, each of these discovers at most $2k_0$ vertices in total, so that by the next time-step $t_1$ when \ref{exploreexcept} applies, we have 
\[ \vert D_{t_1}\vert \leq \vert D_{t_0+1}\vert^2 2k_0 \leq \left(18 (\log n) k_0 \right)^{2^7}2k_0:=2k_1.\]
Repeating the analysis above, we obtain 
\[\vert D_{t_1+1}\vert \leq \left(18 (\log n) k_1\right)^{2^6},\]
and in the next time-step $t_2$ where \ref{exploreexcept} applies we have
\[ \vert D_{t_2}\vert \leq   \vert D_{t_1+1}\vert^2 2k_0 \leq \left(18 (\log n) k_1 \right)^{2^7}2k_0   =:2k_2.\]
and we can keep going in this way, defining $k_3$, $k_4$ mutatis mutandis. If we avoid an exceptional stop, then we must terminate by the fifth time-step $t_4$ when \ref{exploreexcept} applies. Iterating  our analysis, we see that the size of the final set of discovered vertices $D_{t_4}$ is as most
\begin{align*}
\vert D_{t_4}\vert &\leq \left(18 (\log n) k_3 \right)^{2^7}2k_0\\
& = \left(9 (\log n) \left(9 (\log n) \left(9 (\log n) \left(18 (\log n)k_0\right)^{2^7}2k_0 \right)^{2^7}2k_0 \right)^{2^7}2k_0 \right)^{2^7}2k_0\\
&< \left(18(\log n)k_0 \right)^{2^{29}}<2^{2^{10}}\varepsilon^{-2^{10}}(\log n)^{2^{31}}.
\end{align*}
This shows that the probability our  process terminates with a large stop is $O(n^{-3})$.		
\end{proof}

Finally, we compute the probability that an exploration ends with an
exceptional stop. 

\begin{lem}\label{lemma: probability of exceptional stop is small}
The probability that the exploration from $v_1v_2$ terminates with an exceptional stop is $O((\log n)^{30\cdot2^{31}}n^{-5/2})$.	
\end{lem}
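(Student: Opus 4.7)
The plan is to reduce to the bounded-discovered-set regime and then apply a union bound over the exceptional events whose accumulation forces the counter $e_t$ to reach $5$. By Lemma~\ref{lemma: probability of a large stop is small}, the probability of a large stop is $O(n^{-3})$, and by Corollary~\ref{corollary: bound on pairs having many common neighbors} the probability that some pair of vertices has $\geq 9\log n$ common neighbors is $O(n^{-3})$. First I would condition on the complements of these low-probability events; under this conditioning, $|D_t| \leq N := 2^{2^{10}}\varepsilon^{-2^{10}}(\log n)^{2^{31}}$ throughout the exploration, contributing only $O(n^{-3})$ to the final bound.

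Next I would classify each unit increment of $e_t$ as either (i) an \emph{exceptional edge} of $\Gamma[D_t]$ not revealed by the main exploration (detected at the start of Stage~3), or (ii) an \emph{exceptional vertex} $z \in V(\Gamma)\setminus D_t$ sending at least three edges into $D_t$ (detected during Subroutine~3A). Since an exceptional stop requires $e_t \geq 5$, at least five such exceptional items must appear during the run. By a Harris-type argument analogous to the one used in Lemma~\ref{lemma: subcritical, domination}, conditional on the past exploration history, each candidate exceptional edge is present with probability at most $p$, and each candidate exceptional vertex (together with a fixed triple of neighbors in $D_t$) appears with probability at most $p^3$.

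The main computation is then a union bound over the placements of the five exceptional items. Under our conditioning, at any time there are at most $\binom{N}{2}$ candidate exceptional edges and at most $n\binom{N}{3}$ candidate exceptional-vertex configurations. Thus the contribution of a single exceptional item is at most
\[
\binom{N}{2}\,p \;+\; n\binom{N}{3}\,p^{3} \;=\; O\bigl((\log n)^{3\cdot 2^{31}}\,n^{-1/2}\bigr),
\]
and taking the product of five such factors (up to a combinatorial constant) yields a bound of order $(\log n)^{15\cdot 2^{31}} n^{-5/2}$, which is comfortably absorbed into the claimed $O\bigl((\log n)^{30\cdot 2^{31}}n^{-5/2}\bigr)$.

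The main obstacle is ensuring that the five adaptively discovered exceptional items can be handled by a single union bound despite their mutual dependence on the exploration history. I would address this by exposing the edges of $\Gamma$ in stages: at each Stage~3 trigger, reveal only then the internal edges of $\Gamma[D_t]$ not yet recorded in $E_t$ and the external connections into $D_t$, so that the relevant conditional probabilities can be controlled by a Harris--Kleitman argument as in Lemma~\ref{lemma: subcritical, domination}. Since at most five Stage~3 triggers can occur before an exceptional stop is declared, this round-based exposure introduces only a constant overhead, and the $n^{-5/2}$ factor is preserved.
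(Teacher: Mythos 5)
Your proposal is correct and takes essentially the same approach as the paper: the paper likewise decomposes the five increments of $e_t$ into ``type 1'' internal edges of $\Gamma[D_t]$ (each costing a factor $O(\Delta^2 p)$ over candidate pairs) and ``type 2'' external vertices sending at least three edges into $D_t$ (each costing $O(n\Delta^4p^3)$), uses the no-large-stop bound $\vert D_t\vert\le \Delta$ and a Harris-type control of the conditional edge probabilities, and multiplies five such factors to obtain $\mathrm{poly}(\Delta)\,n^{-5/2}$. Your global enumeration of candidates (rather than the paper's per-time-step count with the extra $T^j$ factor) and the preliminary conditioning on bounded codegrees are harmless variations that do not change the argument.
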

\begin{proof}
Suppose that the exploration from $v_1v_2$ terminates at time $T$ with an exceptional stop.
Then we must have discovered at least $5$ exceptional edges at time-steps $t\leq T$ when \ref{exploreexcept} applied, where we call an edge exceptional if it appeared in $E(\Gamma[D_t])\setminus E_t$ (type 1) or as the third or above edge  from some vertex $z\in Z_t^1$ to $D_t$ (type 2), and where such edges are ordered according to the ordering of the vertices of $D_t$.

Since the exploration did not terminate with a large stop, at each of these time steps we had $\vert D_t\vert \leq 2^{2^{10}}\varepsilon^{-2^{10}}(\log n)^{2^{31}}:= \Delta$ at the start of each time-step $t$. Also, since we did not terminate with a large stop, the time $T$ at which the process terminated must satisfy $T\leq \Delta^2$ (this is an upper bound on the number of pairs we could have tested at time steps where \ref{exploremain} or \textbf{3}. applied).

In any time-step $t$ where \textbf{3}.  applies and we are testing for membership in one of the (at most five) sets $Z^1_t$ considered in that turn, the probability that a vertex in $V(\Gamma)\setminus D_t$ sends at least three edges of $\Gamma$ to the set $D_t$ is at most 
\[\sum_{i=3}^{\vert D_t\vert} \binom{\vert D_t\vert }{i}p^i(1-p)^{\vert D_t\vert -i} \leq  \sum_{i=3}^{\Delta}\Delta^i p^i= O\left(\Delta^4 p^3\right).\]
Since we have at most $n$ vertices in $V(\Gamma)\setminus D_t$ and  at most $T\leq \Delta^2$ time-steps to choose from, the probability of having found at least $j$ type 2 exceptional edges for some $1\leq j \leq 5$ is 
\begin{align}\label{eq: bound on type 1 exceptional edges}
O\Bigl(T^j \left(n \Delta^{4}p^{3}\right)^j\Bigr)= O\left(\Delta^{6j} n^{-j/2}\right).
\end{align}
A similar (but simpler) calculation yields that the probability of having found $5-j$ edges of type 1 is:
\begin{align}\label{eq: bound on type 2 exceptional edges}
O\Bigl(T^{5-j} {\left(\Delta^2p\right)}^{5-j}\Bigr)= O\left(\Delta^{4(5-j)}n^{-(5-j)/2}\right).
\end{align}
Adding the bounds \eqref{eq: bound on type 1 exceptional edges} and \eqref{eq: bound on type 2 exceptional edges} together and substituting in the value of $\Delta$, the Lemma follows.
\end{proof}	

We are now in a position to prove 
Theorem~\ref{thm:strong_no_giant}, which, as previously noted,  
immediately implies Theorem~\ref{thm:no_giant}.

\begin{proof}[Proof of Theorem~\ref{thm:strong_no_giant}] 
	Let $v_1v_2$ be an arbitrary pair of vertices from $V(\Gamma)$.
	By Lemmas~\ref{lemma: probability of a large stop is small} 
	and~\ref{lemma: probability of exceptional stop is small}, with
	probability $1-O((\log n)^{30\cdot2^{31}} n^{-5/2})=1-o(n^{-2})$
	the exploration from $v_1v_2$ terminates with an extinction stop. 
	By Lemma~\ref{lemma: good explorations give small components} we 
	obtain a bound on the size of each 
	component of $v_1v_2$ in $\noninduced$ found by this 
	exploration, and a bound on its support as well. 
	By a simple union bound, with
	probability $1-o(1)$, all pairs $v_1v_2$ lie in components of
	$\noninduced$ supported on sets of size at most
	$2^{2^{10}}\varepsilon^{-2^{10}}(\log n)^{2^{31}}$ in 
	$V(\Gamma)$.	
\end{proof}

\section{The supercritical regime: proof of Theorem~\ref{thm:1giant}}\label{section: upper bound}
Fix $\lambda>\lambda_c$. 
Suppose $\lambda n^{-1/2}\leq p(n)\leq (1-f(n))$, where $f(n)$ is a 
function with $f(n)=o(1)$ and $f(n)=\omega(n^{-2})$.  Let $\Gamma\in \G(n,p)$.  
By \cite[Theorem~5.1]{BehrstockFalgasRavryHagenSusse:StructureRandom},
we know that if $p(n)\geq 5\sqrt{\log{n}/n}$, 
then a.a.s.\ there is a square-component covering all of $V(\Gamma)$.  
We may thus restrict our attention in the proof of
Theorem~\ref{thm:1giant} to the range $\lambda n^{-1/2}\leq p(n)\leq
5\sqrt{\log{n}/n}$.  For such $p(n)$, there exists $\varepsilon>0$
such that if $Z\sim\mathrm{Binom}(n,p^2)$, then
\begin{align}\label{eq: supercritical expectation bound}
\mathbb{E} \left(\frac{Z^2+3Z}{2}\right)\geq 1+\varepsilon.
\end{align}
We shall prove the existence of a giant square-component with full 
support  in four stages: first, we define an exploration process in 
$\square(\Gamma)$. In a second stage, we analyse the process to show 
that a.a.s.\ a large proportion of non-edges of $\Gamma$ lie in 
``somewhat large'' square-components of $\square(\Gamma)$. Next, in the (more involved) 
third stage of the argument, we perform \emph{vertex-sprinkling} to show a.a.s.\ a large proportion of non-edges of $\Gamma$ lie in a giant square-component. Finally, we show there a.a.s. a giant square-component covering all of $V(\Gamma)$.

\subsection{An exploration process}\label{subsection: supercritical exploration process}

We consider an exploration process consisting of the following data 
at each time $t\geq 0$: 

\begin{itemize}
	\item \textbf{(Discovered vertices.)} An ordered set of 
	vertices: $D_t=\{v_1, v_2, \ldots , v_{d_{t}}\}\subseteq V(\Gamma)$.

	\item \textbf{(Active pairs.)} A set of pairs of vertices (ordered
lexicographically with respect to the ordering on $D_t$): $A_t=\{x_1y_1, x_2y_2, \ldots ,
x_{a_t}y_{a_t}\}\subseteq D_t^{(2)}\setminus E(\Gamma)$. 

	\item \textbf{(Reached pairs.)} A set of pairs of vertices: 
	$R_t\subseteq D_t^{(2)}\setminus E(\Gamma)$.
\end{itemize}

These sets will satisfy:
\begin{itemize}
	\item [($\star$)] for every active pair $x_iy_i\in A_t$, 
	the vertices $x_i$ and $y_i$ have at least $2$ common neighbors 
	in the subgraph of $\Gamma$ induced by $D_t$.
\end{itemize}

The initial state $t=0$ of the exploration consists of an 
arbitrary induced $C_4$ of $\Gamma$, denoted $v_1v_2v_3v_4$, and the sets:
$D_0=\{v_1, v_2,v_3,v_4\}$, $A_0=\{v_1v_3, v_2v_4\}$, and $R_0=\emptyset$.

Our exploration then proceeds as follows: 
\newline \textbf{1.  If
$\vert R_t\vert +\vert A_t\vert > (\log n)^4$}, then we terminate the
process.  
\newline \textbf{2.  If $\vert R_t\vert +\vert A_t\vert \leq
(\log n)^4$ and $A_t\neq \emptyset$}, then for each $z\in
V(\Gamma)\setminus D_t$ we test whether or not $z$ sends an edge in
$\Gamma$ to both of $\{x_1,y_1\}$ which are the vertices of the first 
pair $a_1=x_1y_1$ in the ordered set $A_{t}$. We then set $Z_t:=\{z\in
V(\Gamma)\setminus D_t:\ zx_1,zy_1\in E(\Gamma_t)\}$.  Denote by $F_t$
the set of common neighbors of $x_1$ and $y_1$ in $D_t$ (which by
property $(\star)$ has size at least $2$).

We then set $A_{t+1}=\left(A_t\setminus\{x_1y_1\} \right) \cup \left(\left(F_t\cup Z_t\right)^{(2)}\setminus \left(F_t^{(2)}\cup E(\Gamma)\right)\right)$, $R_{t+1}=R_t\cup\{x_1y_1\}$ and $D_{t+1}=D_t\cup Z_t$. We then proceed to the next time-step in the exploration process, noting that property $(\star)$ is maintained.
\newline \textbf{3. If $\vert R_t\vert \leq (\log n)^4$ and $A_t=\emptyset$}, then we terminate the process.

	\subsection{Many non-edges in somewhat large components}\label{section: set up}
The exploration process defined in the previous subsection can terminate for one of two reasons:
\begin{enumerate}[label=(\arabic*)]
	\item ({\bf Large stop}.) $\vert R_t\vert+\vert A_t\vert  > (\log n)^4$, or
	\item ({\bf Extinction stop}.) $A_t=\emptyset$.
\end{enumerate}
The process always terminates after some number $T\leq \left(\log n\right)^{4}$ of time-steps. By construction, at all times $t\geq 0$ the collection of pairs $A_t\cup R_t$ is a subset of the square-component of $v_1v_3$ and $v_2v_4$ in $\square(\Gamma)$. Our aim is to show that with somewhat large probability  $\vert A_T\vert +\vert R_T\vert >(\log n)^4$.
\begin{lem}\label{lemma: supercritical offspring distribution}
At any time-step $t\geq 0$, the distribution conditional on the past history of the process of the random variable  $X_t=\vert A_{t+1}\setminus A_t\vert$ counting the number of new active pairs discovered by $a_1=x_1y_1$  stochastically dominates a random variable $X'$ with mean $\mathbb{E}(X')\geq 1+\varepsilon+o(1)$.
\end{lem}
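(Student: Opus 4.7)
The plan is to lower bound $X_t$ using the ``fresh'' randomness of edges in $\Gamma$ that have not been revealed by the past exploration. First, I would observe that since $Z_t\cap D_t=\emptyset$, every pair in $(F_t\times Z_t)\cup \binom{Z_t}{2}$ which is a non-edge of $\Gamma$ contributes to $A_{t+1}\setminus A_t$: such a pair cannot lie in $F_t^{(2)}$ (it has a vertex in $Z_t$) nor already in $A_t\subseteq D_t^{(2)}$. Combined with property $(\star)$, which guarantees $|F_t|\geq 2$, this yields the deterministic inequality
\begin{equation*}
X_t \;\geq\; 2|Z_t| + \tbinom{|Z_t|}{2} - N_t,
\end{equation*}
where $N_t$ counts the edges of $\Gamma$ present among the at most $2|Z_t|+\tbinom{|Z_t|}{2}$ potential new pairs.

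Next, I would argue that, conditional on a typical past history, $|Z_t|$ stochastically dominates $\mathrm{Binom}(n-o(n), p^2)$. Let $U_t\subseteq V(\Gamma)\setminus D_t$ denote the set of vertices $z$ for which neither $zx_1$ nor $zy_1$ has been examined in any previous test; by the edge-independence of $\G(n,p)$, conditional on the past $|Z_t\cap U_t| \sim \mathrm{Binom}(|U_t|, p^2)$. Since the exploration terminates within $T\leq (\log n)^4$ time-steps and each tested pair can only examine edges from its two vertices to the outside, one has $|D_t| = O((\log n)^5)$ typically; provided neither $x_1$ nor $y_1$ has been the first pair of an earlier step, all edges from $\{x_1,y_1\}$ to $V(\Gamma)\setminus D_t$ remain unexamined, whence $|U_t|=n-|D_t|=n(1-o(1))$.

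Given $|Z_t|$, I would control the expected number of bad edges $N_t$: each unexamined potential pair is an edge of $\Gamma$ with conditional probability $p=o(1)$ independently, while at most $O((\log n)^4)$ potential pairs have been previously revealed and thus contribute a lower-order term. Consequently,
\begin{equation*}
\mathbb{E}[X_t \mid \text{past}] \;\geq\; (1-p-o(1))\,\mathbb{E}\!\left[\frac{|Z_t|^2+3|Z_t|}{2}\,\Big|\,\text{past}\right].
\end{equation*}
By~\eqref{eq: supercritical expectation bound}, $\mathbb{E}[(Z^2+3Z)/2]\geq 1+\varepsilon$ for $Z\sim\mathrm{Binom}(n, p^2)$, and replacing $n$ by $|U_t|=n(1-o(1))$ shifts this mean by only $o(1)$. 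Taking $X' = (1-p)((Z')^2+3Z')/2$ with $Z'\sim\mathrm{Binom}(|U_t|, p^2)$ then yields both the claimed stochastic domination and the bound $\mathbb{E}[X']\geq 1+\varepsilon+o(1)$.

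The main obstacle is handling ``exceptional'' pasts in which one of $x_1, y_1$ has been involved in an earlier tested pair, so that $|U_t|$ is small and the fresh-randomness argument collapses. In this regime the revealed edges from $x_1$ (resp.\ $y_1$) to $V(\Gamma)\setminus D_t$ are deterministic, and one must instead invoke concentration of $|\Gamma_{x_1}\cap (V(\Gamma)\setminus D_t)|$ around its typical value $(n-|D_t|)p$ to ensure the conditional mean of $|Z_t|$ still dominates $np^2(1-o(1))$. Carefully bounding the probability of such exceptional pasts and verifying that they contribute negligibly --- analogous to the exceptional-edge analysis in Section~\ref{section: lower bound} --- is the technical crux of the argument.
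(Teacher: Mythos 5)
There is a genuine gap, and it sits exactly where you locate ``the main obstacle.'' Your fresh-randomness argument needs $|U_t|=n(1-o(1))$, i.e., that no edge from $\{x_1,y_1\}$ to $V(\Gamma)\setminus D_t$ has been probed before. But this fails for essentially every tested pair after the first couple of steps: the new active pairs created at a given step all lie in $(F_t\cup Z_t)^{(2)}$ and hence heavily share vertices, so by the time $x_1y_1$ is tested, $x_1$ (say) has typically already belonged to an earlier tested pair $x_1y'$, and for every $z$ not then discovered the process has revealed the disjunctive information ``at least one of $zx_1,zy'$ is absent.'' Thus the histories you defer to the end as ``exceptional'' are the generic case, and they are also mischaracterized: the revealed information is not deterministic knowledge of which edges at $x_1$ are present, but a large collection of correlated decreasing events, so concentration of $|\Gamma_{x_1}\cap(V(\Gamma)\setminus D_t)|$ does not address the difficulty. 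Finally, since the lemma is used to couple the exploration step-by-step with a branching process, you need stochastic domination conditional on (essentially) every realizable history; you cannot simply discard bad histories as improbable.

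The missing idea is the Bollob\'as--Riordan/Harris device that the paper uses: the past is the intersection of the increasing event $\mathcal{U}=\{E_t\subseteq E(\Gamma)\}$ with a decreasing event $\mathcal{D}$; for $z\notin D_t$ the event $\{zx_1,zy_1\in E(\Gamma)\}$ depends only on edges at $z$, so by Harris's Lemma one reduces to conditioning on the decreasing event $\mathcal{D}'(z)$ involving only edges from $z$ into $D_t$, and an exact computation gives $\mathbb{P}(zx_1,zy_1\in E(\Gamma)\mid\mathcal{D}'(z))=p^2(1+O(|D_t|p))=p^2(1+o(1))$ since $|D_t|\le 2(\log n)^4$. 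This uniform-over-histories statement --- that all the accumulated negative information costs only a $1+o(1)$ multiplicative factor --- is the heart of the lemma and is absent from your argument. Your remaining steps (the count $X_t\ge 2|Z_t|+\binom{|Z_t|}{2}-N_t$ using $|F_t|\ge 2$ from property $(\star)$, the independence of the indicators across distinct $z$, and the Bernoulli$(1-p)$ thinning to discard pairs that turn out to be edges of $\Gamma$) do match the paper.
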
	
\begin{proof}
Write $E_t$ for the set of edges discovered by the process.  As
observed by Bollob\'as and Riordan~\cite[Inequality
(3)]{BollobasRiordan:cliquepercolation}, the past of the process is
the intersection of the principal increasing event
$\mathcal{U}=\{E_t\subseteq E(\Gamma)\}$ and a decreasing event
$\mathcal{D}$ (corresponding to the intersection of a number of events
of the form ``at least one of $zx, zy$ is not in $\Gamma$'' for some
previously tested pair $xy$).  In particular, for $xy\in A_t$ and
$z\in V(\Gamma)\setminus D_t$,
\[\mathbb{P}\left(xz, yz\in E(\Gamma)\vert\mathcal{D}\cap\mathcal{U}\right) = \mathbb{P}\left(xz, yz\in E(\Gamma)\vert\mathcal{D}\right).\]
Let $\mathcal{D}'(z)$ denote the decreasing event
\[\mathcal{D}'(z)=\bigcap_{x'y'\in D_t^{(2)}\setminus\{xy\}} \{\textrm{at least one of  $zx', zy'$  is not in $\Gamma$}\}.\]
Note that  the event $\{xz, yz\in E(\Gamma)\}$ is independent of $\mathcal{D}\setminus \mathcal{D}'(z)$.
Using this fact and appealing to Harris's Lemma~\cite{Harris60}, we have
\[\mathbb{P}\left(xz, yz\in E(\Gamma)\vert\mathcal{D}\right) =\mathbb{P}\left(xz, yz\in E(\Gamma)\vert\mathcal{D}\cap \mathcal{D}'(z)\right)
\geq \mathbb{P}\left(xz, yz\in E(\Gamma)\vert\mathcal{D}'(z)\right).\]
Now conditional on $\mathcal{D}'$, the probability that both $xz$ and $yz$ are in $E(\Gamma)$ is readily computed: it is equal to 
\begin{align*}
\frac{p^2(1-p)^{\vert D_t\vert-2}}{p^2(1-p)^{\vert D_t\vert-2}+ 2p(1-p)(1-p)^{\vert D_t\vert-2} +(1-p)^2\left((1-p)^{\vert D_t\vert -2}+(\vert D_t\vert -2)p(1-p)^{\vert D_t\vert -3}\right) }
\end{align*}
which is equal to $p^2\left(1+O(\vert D_t\vert p)\right)=p^2+o(p^2)$ (since$\vert D_t\vert \leq 2\left(\vert R_t\vert+\vert A_t\vert \right)\leq 2(\log n)^4$).

The indicator function of the event $Y_z=\{z\in Z_t\}$ thus stochastically dominates a Bernoulli random variable with mean $p^2+o(p^2)$. Further the $(Y_z)_{z\in V(\Gamma)\setminus D_t}$ are independent events given our conditioning (since each such event is only affected by the state of edges from $D_t$ to $z$). The random variable $\vert Z_t\vert $ thus stochastically dominates a random variable $Z'\sim\mathrm{Binom}(n-\vert D_t\vert, p^2+o(p^2))$. Let $X'$ denote the sum of $\frac{(Z')^2+3Z'}{2}$ independent Bernoulli random variables with parameter $1-p$. For $\vert D_t\vert \leq 2(\log n)^4$, we have
 \begin{align*}
\mathbb{E}  \vert A_{t+1}\setminus A_t\vert& =\mathbb{E} \left(\frac{\vert Z_t\vert^2+3\vert Z_t\vert}{2}-\vert E(\Gamma)\cap (F_t\cup Z_t)^{(2) }\vert\right)\\
&\geq \mathbb{E}X'=(1-p)\mathbb{E}\left(\frac{\vert Z'\vert^2+3\vert Z'\vert}{2}\right) \geq 1+\varepsilon +o(1),
 \end{align*}
where the inequality follows from the stochastic domination of $Z'$ by
$Z_t$, and where in the last line we have used~\eqref{eq:
supercritical expectation bound} and the fact that a binomial distribution
with parameters $n-\vert D_t\vert$ and $p^2+o(p^2)$ is close to
$\mathrm{Binom}(n,p^2)$.
\end{proof}
Let $\theta_e=\theta_e(n,p)$ denote the extinction probability of the supercritical branching process $\mathbf{W}$ with the offspring distribution $X'$ given in the proof of Lemma~\ref{lemma: supercritical offspring distribution}. Note that by Proposition~\ref{proposition: basic branching processes}(b), $\theta_e$ is bounded away from $1$.

Up to the time when it terminates, our exploration  process on the 
square-component of $v_1v_2v_3v_4$ stochastically dominates 
$\mathbf{W}$. We now use this fact to show many non-edges of $\Gamma$ lie 
in ``somewhat large'' square-components.

\begin{lem}[Many squares in large square-components]\label{lemma: many squares in large components}
Fix $\lambda>\lambda_c$, $p(n)$ satisfying $\lambda 
n^{-1/2}\leq p(n)\leq  5n^{-1/2}\sqrt{\log n} $, and 
$\theta_e=\theta_e(n,p)$ as above. Then, with probability $1-o(n^{-2})$ the number $N$ of induced $C_4$s in $\Gamma$ which are part of square-components of order at least $(\log n)^4$ satisfies 
\[N= (1+o(1))\mathbb{E}N\geq 3p^4(1-p)^2\binom{n}{4}(1-\theta_e)(1+o(1)).\]
\end{lem}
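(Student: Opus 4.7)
The plan is to prove this lemma in two steps: first establish the lower bound on $\mathbb{E}N$ by a direct expectation calculation, then show the concentration $N=(1+o(1))\mathbb{E}N$ holds with probability $1-o(n^{-2})$ via a second moment bound combined with a martingale argument.

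For the mean, the number of potential induced $C_4$ configurations is $3\binom{n}{4}$ (each $4$-set of vertices supports $3$ distinct cyclic orderings), and each realizes as an induced $C_4$ of $\Gamma$ with probability exactly $p^4(1-p)^2$ (four edges present, two diagonals absent). For each such potential configuration $S$, I would condition on $S$ being an induced $C_4$ and launch the exploration process of Section~\ref{subsection: supercritical exploration process} seeded at $S$. By Lemma~\ref{lemma: supercritical offspring distribution}, this exploration stochastically dominates a supercritical Galton--Watson branching process $\mathbf{W}$ with offspring mean at least $1+\varepsilon+o(1)$, whose extinction probability $\theta_e$ is bounded strictly away from $1$ by Proposition~\ref{proposition: basic branching processes}. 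Conditioned on non-extinction, the progeny of $\mathbf{W}$ tends to infinity almost surely, so for the fixed threshold $(\log n)^4$ the probability that the progeny exceeds it tends to $1-\theta_e$. By stochastic domination, the exploration from $S$ terminates with a large stop with probability at least $(1-\theta_e)(1+o(1))$, placing $S$ in a square-component of order at least $(\log n)^4$. Summing over all $3\binom{n}{4}$ configurations yields the required lower bound on $\mathbb{E}N$.

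For the concentration, write $N=\sum_S \mathbf{1}_S$ where $\mathbf{1}_S$ indicates that $S$ is an induced $C_4$ lying in a square-component of order at least $(\log n)^4$, and bound $\mathrm{Var}(N)$ by splitting $\sum_{S,S'}\mathbb{E}[\mathbf{1}_S\mathbf{1}_{S'}]$ according to whether $S$ and $S'$ share vertices. Pairs sharing at least one vertex contribute only $O(n^7 p^4)$ to the second moment, which is negligible against $(\mathbb{E}N)^2=\Theta(n^8 p^8)$ in the regime $p=\Theta(n^{-1/2})$. For disjoint pairs, I would couple the two explorations using disjoint regions of $\Gamma$, with a careful accounting of ``boundary'' edges between the two regions, to show the joint survival probability is $(1+o(1))(1-\theta_e)^2 p^8(1-p)^4$. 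This yields $\mathrm{Var}(N)=o((\mathbb{E}N)^2)$.

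The main obstacle is upgrading this variance estimate, which via Chebyshev yields only concentration with probability $1-o(1)$, to the much stronger tail bound $1-o(n^{-2})$ claimed in the statement. To obtain this, I plan to work on the high-probability event (holding with probability $1-o(n^{-2})$ by an analog of Corollary~\ref{corollary: bound on pairs having many common neighbors} in this regime) that no pair of vertices has more than $O(\log n)$ common neighbors. On this event, exposing a single vertex of $\Gamma$ can only change $N$ by a polylogarithmic amount, because each induced $C_4$ through a new vertex requires two common neighbors already present among previously exposed vertices. Applying Azuma's inequality to the vertex-exposure martingale then gives Gaussian-type deviation bounds on the scale of $n\cdot\mathrm{polylog}(n)$, which comfortably absorb the $n^{-2}$ factor. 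Making the Lipschitz bound rigorous — in particular, controlling how adding a single vertex can merge or split existing square-components and thus redistribute membership in the set of ``large'' components — is the most delicate part of the argument and will require using the structure of the exploration process to charge any large change to a bounded number of local structural features at the new vertex.
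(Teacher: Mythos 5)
Your computation of the mean and your decomposition of the second moment into intersecting and disjoint pairs track the paper's argument closely: the paper likewise conditions on $\Gamma[S]\cong C_4$, invokes Lemma~\ref{lemma: supercritical offspring distribution} and the survival probability $1-\theta_e$ for the lower bound on $\mathbb{E}N$, and splits $\mathbb{E}N^2$ according to $|S\cap S'|$. (One slip: intersecting pairs contribute $O(n^7p^8)$, not $O(n^7p^4)$; as written your bound is $\Theta(n^5)$, which is \emph{not} negligible against $(\mathbb{E}N)^2=\Theta(n^4)$.) The real divergence is in how the $1-o(n^{-2})$ tail is reached. The paper never leaves Chebyshev: it quantifies the near-independence of the two explorations for disjoint $S,S'$ --- the probability that the exploration from $S'$ interacts with the $O((\log n)^4+\log n)$ vertices discovered from $S$ is $O((\log n)^{9}n^{-1})$ --- deduces $\mathrm{Var}(N)=O((\mathbb{E}N)^2/n^3)$, and then applies Chebyshev with a slowly shrinking relative deviation $\delta(n)\to 0$, which simultaneously gives $N=(1+o(1))\mathbb{E}N$ and failure probability $O(\delta^{-2}n^{-3})=o(n^{-2})$. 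You instead settle for a qualitative $(1+o(1))$ covariance estimate and try to recover the strong tail with Azuma.

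That Azuma step contains a genuine error. The claim that exposing one vertex changes $N$ by a polylogarithmic amount is false: the codegree bound $O(\log n)$ controls the number of induced squares with a \emph{fixed diagonal}, but a single vertex $v$ lies on $\Theta(n^3p^4)=\Omega(n)$ squares (choose the vertex opposite $v$ in $\sim n$ ways, then two common neighbours), and by merging up to that many sub-threshold components --- each carrying up to $O((\log n)^6)$ induced squares --- one vertex can move $\Theta(n\cdot\mathrm{polylog}(n))$ squares into or out of the count. So the Lipschitz constant is $\Theta(n\,\mathrm{polylog}(n))$, not $\mathrm{polylog}(n)$. The approach is salvageable, since over $n$ exposure steps increments of size $O(n\,\mathrm{polylog}(n))$ still give deviations $O(n^{3/2}\mathrm{polylog}(n))=o(\mathbb{E}N)$ with probability $1-o(n^{-2})$; but you would also need to handle the fact that the bounded-codegree event only holds with high probability, so the worst-case increments are far larger and a typical-bounded-differences argument (McDiarmid/Warnke style) is required --- substantial extra work. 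The cleaner fix is the paper's: make your ``boundary accounting'' for disjoint pairs quantitative, with error $O(\mathrm{polylog}(n)/n)$ \emph{additively} in the conditional probability $\mathbb{P}(E_{S'}=1\mid E_S=1)$, and let Chebyshev carry the whole load.
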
	
\begin{proof}
Given a collection of $4$ vertices $S\in V(\Gamma)^{(4)}$, let $E_S$ be the indicator function of the event that $\Gamma[S]\cong C_4$ and that our exploration  process from $S$ terminates with a large stop (which is equivalent to $S$ being part of a square-component of order at least $(\log n)^4$). Conditional on  $\Gamma[S]\cong C_4$, Lemma~\ref{lemma: supercritical offspring distribution} implies that $\mathbb{P}(E_S=1)\geq 1-\theta_e$ (which is the probability that the  branching process $\mathbf{W}$ does not become extinct). Applying Wald's identity, the expectation $\mu_N$ of $N$ thus satisfies
\begin{align}
\mu_N=\mathbb{E}N&=\mathbb{E}\sum_{S\in V(\Gamma)^{(4)}} E_S=\sum_{S\in V(\Gamma)^{(4)}}  \mathbb{P}(\Gamma[S]\cong C_4)\mathbb{P}\left(E_S=1\vert \Gamma[S]\cong C_4 \right)\notag\\
&\geq 3p^4(1-p)^2\binom{n}{4}(1-\theta_e). \label{eq: lower bound on mu_N}
\end{align}
We now use Chebyshev's inequality to show $N$ is concentrated around 
its mean. To do this, we must bound $\mathbb{E}N^2=\sum_{S,S'\in 
V(\Gamma)^{(4)}}\mathbb{E}E_SE_{S'}$. Consider two collections of $4$ 
vertices $S,S'\in V(\Gamma)^{(4)}$. 
\begin{claim}\label{claim: variance argument, bound on contribution from disjoint S, S'}
If	 $S\cap S'=\emptyset$,  then $E_S$ and $E_{S'}$ satisfy
\[\mathbb{E} \left(E_SE_{S'}\right)=\mathbb{E}\left(E_S\right)\mathbb{E}\left(E_{S'}\right)+ O\left((\log n)^9n^{-1}\mathbb{E}(E_S)\right).\]
\end{claim}	
	\begin{proof}
Our claim is that $E_S$ and $E_{S'}$ are essentially independent. Indeed, let us first perform our exploration process from $S$ (stopping immediately if $\Gamma[S]$ does not induce a copy of $C_4$). For $Z\sim \mathrm{Binom}(n, p^2)$ (and $\lambda n^{-1/2}\leq p\leq 5n^{-1/2}(\log)^{-1/2}$ as everywhere in this section), we have 
	\begin{align}\label{eq: useful bound on Z geq 2^7 log n}
	\mathbb{P}(Z\geq  2^7\log n)=\sum_{r=\lceil 2^7\log n\rceil}^{n} \binom{n}{r}p^{2r}(1-p^2)^{n-r}\leq \sum_{r\geq 2^7\log n} \left(\frac{en}{r}\cdot25\frac{\log n}{n}\right)^r=o(n^{-5}).\end{align}
Thus with probability $1-o(n^{-3})$,  the number of vertices added to $D_t$ in the last stage of the exploration process from $S$ is at most $2^7 \log n$, implying that the set $D_{S}$ of vertices discovered by the process from $S$ has size at most $2(\log n)^4 + 2^7 \log n$. Further, the exploration process from $S$ tests at most $\left(\log n\right)^4$ pairs in total. This allows us to bound the probability that the exploration process from $S'$ interacts with the exploration process from $S$.

First of all, by Markov's inequality the probability that a vertex in $S'$ is discovered by the process from $S$ is at most 
\[4p^2\left(\log n\right)^4 = O\left((\log n)^5 n^{-1}\right).\] Secondly, the exploration process from $S'$ tests at most $(\log n)^4$ pairs $xy$. By Markov's inequality again, the probability that some $z\in D_S$ sends an edge to both vertices in such a pair is at most
\[(\log n)^4\vert D_S\vert p^2=O \left((\log )^9 n^{-1}\right).\]
In particular, the probability of $E_{S'}=1$ given $E_S=1$ differs from $\mathbb{E} E_{S'}$ by at most $O((\log n)^{9}n^{-1}$, as claimed.
\end{proof}
\begin{claim}\label{claim: variance argument, bound on contribution from non-disjoint S, S'}
For any $S\in V(\Gamma)^{(4)}$, we have	
	\begin{align*}
	\sum_{S'\in V(\Gamma)^{(4)}:\ S\cap S'\neq \emptyset}\mathbb{E}\left(E_SE_{S'}\right)=O\left(n^3p^4\mathbb{E}(E_S)\right).&& (\dagger)\end{align*}
\end{claim}
\begin{proof}
	Fix $S$ and consider the various ways in which $S$ and $S'$ could 
	intersect non-trivially. \begin{itemize}
		\item There is one choice of $S'$  with $S=S'$, for which we have $\mathbb{E}(E_SE_{S'})=\mathbb{E}(S)$.
		\item Next, we have at most $4n$ choices of $S'$ with $\vert S\cap S'\vert =3$. Write $S=\{a,b,c,d\}$ and $S'=\{a,b,c,d'\}$. For $E_SE_{S'}$ to be non-zero, both $S$ and $S'$ must induce copies of $C_4$ in $\Gamma$ and moreover $E_S$  must occur. This is only possible if $E_S=1$ and $d'$ sends edges to the two neighbors of $d$ in $\{a,b,c\}$. Arguing as in Claim~\ref{claim: variance argument, bound on contribution from disjoint S, S'}, these two events are almost independent and occur with probability $(1+o(1))p^2\mathbb{E}E_S$. Thus the contribution of $S'$ with $\vert S\cap S'\vert =3$ to the left-hand side of $(\dagger)$ is at most $O(np^2\mathbb{E}(E_S))$.
		\item There are at most $4n^3$ choices of $S'$ with $\vert S\cap S'\vert =1$. For $E_SE_{S'}$ to be non-zero, it is necessary for $S'$ to induce a copy of $C_4$ in $\Gamma$ and for $E_S=1$. Arguing as in Claim~\ref{claim: variance argument, bound on contribution from disjoint S, S'}, these two events are almost independent and occur with probability $(1+o(1))p^4\mathbb{E}E_S$. Thus the contribution of $S'$ with $\vert S\cap S'\vert =1$ to the left-hand side of $(\dagger)$ is at most $O(n^3p^4\mathbb{E}(E_S))$.
		\item Finally, there are at most $6n^2$ choices of $S$ with $\vert S\cap S'\vert =2$. For $E_SE_{S'}$ to be non-zero, it is necessary for the vertices in $S'\setminus S$ to be incident at least three edges in $\Gamma[S']$ and for $E_S=1$. Arguing as in Claim~\ref{claim: variance argument, bound on contribution from disjoint S, S'}, these two events are almost independent and occur with probability at most $(1+o(1))p^3\mathbb{E}E_S$. Thus the contribution of $S'$ with $\vert S\cap S'\vert =2$ to the left-hand side of $(\dagger)$ is at most $O(n^2p^3\mathbb{E}(E_S))$.
	\end{itemize}
Since $np=\omega(1)$, we have $O(1+ np^2+n^2p^3+n^3p^4)=O(n^3p^4)$, and the analysis above shows the left-hand side of $(\dagger)$ is at most $O(n^2p^3\mathbb{E}(E_S))$, as claimed.	
\end{proof}
Together, Claims~\ref{claim: variance argument, bound on contribution from disjoint S, S'} and~\ref{claim: variance argument, bound on contribution from non-disjoint S, S'} imply $\mathbb{E}N^2\leq \left(\mathbb{E}N\right)^2 + O\left(n^3p^4 \mathbb{E}N\right)$. By inequality~\ref{eq: lower bound on mu_N}, we know $\mu_N=\mathbb{E}N=\Omega(n^4p^4)$. Since $p=\Omega(n^{-1/2})$, it follows that 
\[\mathrm{Var}(N)= O\left(n^2p^3 \mathbb{E}(N)\right)= O\left(\frac{\mu_N}{n}\right)=O\left(\frac{(\mu_N)^2}{n^3}\right).\]
Applying Chebyshev's inequality yields that with probability at least $1-o(n^{-2})$,
\[N=(1-o(1))\mu_N \geq 3p^4(1-p)^2\binom{n}{4}(1-\theta_e)(1-o(1)),\]
as desired.
\end{proof}

\begin{cor}\label{corollary: lots of non-edges in large-ish components (1-eps_1) version}
Let $\lambda>\lambda_c$ be fixed, and let $p=p(n)$ be an edge-probability satisfying $\lambda n^{-1/2}\leq p\leq 5 n^{-1/2}\sqrt{\log n}$. Then for all $\varepsilon_1> 0$ sufficiently small, there exist a constant $\varepsilon_2>0$  
such that if $\Gamma_1\in \G((1-\varepsilon_1)n, p(n))$, then with probability $1-O(n^{-1})$ the number $N_v$ of non-edges of $\Gamma_1$ that lie in square-components of $\square(\Gamma_1)$ of order at least $(\log n)^4$, satisfies 
\[N_v=(1+o(1))\mathbb{E}(N_v)\geq \varepsilon_2 n^2.\]
\end{cor}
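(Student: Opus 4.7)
The strategy is to closely parallel the proof of Lemma~\ref{lemma: many squares in large components}, but count non-edges of $\Gamma_1$ in large square-components directly rather than passing through induced $C_4$s (which would lose a $\Theta((\log n)^2)$ factor because a single non-edge can be the diagonal of many induced $C_4$s). For each unordered pair $uv\in V(\Gamma_1)^{(2)}$, let $X_{uv}$ be the indicator of the event that $uv\notin E(\Gamma_1)$ and $uv$ lies in a square-component of $\square(\Gamma_1)$ with at least $(\log n)^4$ non-edges, so that $N_v=\sum_{uv}X_{uv}$.

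To lower-bound $\mathbb{E}[N_v]$, I consider the exploration process of Section~\ref{subsection: supercritical exploration process} seeded at a single non-edge $uv$, with $D_0=\{u,v\}$, $A_0=\{uv\}$ and $R_0=\emptyset$. Set $n'=(1-\varepsilon_1)n$. Conditionally on $uv\notin E(\Gamma_1)$ the common-neighbor set $Z_0=\Gamma_u\cap\Gamma_v$ is $\mathrm{Binom}(n'-2,p^2)$-distributed, and since $n'p^2\geq(1-\varepsilon_1)\lambda^2$ is a positive constant, the probability that $Z_0$ contains some pair $\{a,b\}$ with $ab\notin E(\Gamma_1)$ is bounded below by a constant $c_0=c_0(\varepsilon_1,\lambda)>0$. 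On this event $\{u,v,a,b\}$ induces a copy of $C_4$ in $\Gamma_1$, so the square-component of $uv$ coincides with that of this $C_4$. Provided $\varepsilon_1$ is chosen small enough that $\tfrac{1}{2}(1-\varepsilon_1)^2\lambda^4+2(1-\varepsilon_1)\lambda^2>1$, the analogue of Lemma~\ref{lemma: supercritical offspring distribution} in $\mathcal{G}(n',p)$ produces a supercritical offspring distribution with mean $\geq 1+\varepsilon'$ for some $\varepsilon'>0$; by Proposition~\ref{proposition: basic branching processes}(ii) the dominated branching process survives with probability at least $1-\theta_e'$ for a constant $\theta_e'<1$. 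Standard supercritical-branching-process estimates then show that on the survival event, $|A_t|+|R_t|$ exceeds $(\log n)^4$ before time $(\log n)^4$ with probability $1-o(1)$, so the exploration terminates with a large stop and $X_{uv}=1$. Combining, $\mathbb{P}(X_{uv}=1\mid uv\notin E(\Gamma_1))\geq c_0(1-\theta_e')/2$ for $n$ large, and hence $\mathbb{E}[N_v]\geq\varepsilon_2 n^2$ for a suitable constant $\varepsilon_2>0$.

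For concentration I perform a second-moment calculation mirroring Claims~\ref{claim: variance argument, bound on contribution from disjoint S, S'} and~\ref{claim: variance argument, bound on contribution from non-disjoint S, S'}. For pairs with $\{u,v\}\cap\{u',v'\}=\emptyset$, each exploration tests at most $(\log n)^4$ pairs and, by~\eqref{eq: useful bound on Z geq 2^7 log n} together with a union bound, discovers at most $O((\log n)^5)$ vertices with probability $1-o(n^{-3})$; reprising the argument of Claim~\ref{claim: variance argument, bound on contribution from disjoint S, S'} verbatim then gives $|\mathrm{Cov}(X_{uv},X_{u'v'})|=O((\log n)^9 n^{-1})$. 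Summing over the $O(n^4)$ such disjoint pairs contributes $O(n^3(\log n)^9)$ to the variance, while the $O(n^3)$ non-disjoint pairs contribute at most $O(n^3)$. Since $(\mathbb{E}N_v)^2=\Theta(n^4)$, Chebyshev's inequality now yields $\mathbb{P}\!\left(|N_v-\mathbb{E}N_v|\geq\varepsilon_2 n^2/2\right)=O((\log n)^9/n)=O(n^{-1})$ (after absorbing polylogarithmic factors), as claimed.

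The main technical obstacle is the lower bound on $\mathbb{P}(X_{uv}=1\mid uv\notin E(\Gamma_1))$: because the exploration seeded at a single non-edge has $|F_0|=0$ rather than $|F_0|=2$, the root's offspring distribution $\binom{|Z_0|}{2}$ has mean only $\tfrac{1}{2}(1-\varepsilon_1)^2\lambda^4+o(1)$, which is strictly less than $1$. The branching-process comparison is therefore \emph{not} supercritical at the root, so one cannot apply Proposition~\ref{proposition: basic branching processes}(ii) to the whole exploration tree at once. Instead one must argue separately that the root survives to the second generation with positive constant probability (the event $\{Z_0\text{ contains a non-edge pair}\}$), and only then invoke supercriticality for the subsequent descendant branching processes, each of which starts from a genuine induced $C_4$ and therefore enjoys the supercritical mean from Lemma~\ref{lemma: supercritical offspring distribution}. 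With this subtlety resolved, the rest of the argument is a direct adaptation of the machinery already developed in the proof of Lemma~\ref{lemma: many squares in large components}.
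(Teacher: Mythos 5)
Your proposal is correct and follows essentially the same route as the paper's proof: seed the exploration at a single non-edge, note that the root step alone is subcritical, condition on the constant-probability event that the common-neighbor set of the seed spans a non-edge (the paper's ``survives infancy'' event), invoke the supercritical branching process of Lemma~\ref{lemma: supercritical offspring distribution} from the resulting induced $C_4$, and then run the same second-moment/Chebyshev argument with Claim~\ref{claim: variance argument, bound on contribution from disjoint S, S'} for disjoint pairs and a trivial bound for overlapping pairs. The only cosmetic differences are in how the infancy-survival probability is bounded below (you compute directly with the binomial and the probability that the common neighbourhood spans a non-edge, whereas the paper uses $\mathbb{P}(\vert Z\vert \geq 6)$ minus the probability that $\Gamma$ contains a $K_6$), and your explicit acknowledgment of the residual $(\log n)^9$ factor in the variance bound, which is if anything more careful than the paper's own bookkeeping.
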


\begin{proof}
Let $\lambda'=	(\lambda+\lambda_c)/2$. For $\varepsilon_1$ 
sufficiently small, we have $p(n)\geq 
\lambda'(n(1-\varepsilon_1))^{-1/2}$. We now consider $\Gamma_1\in \G(n(1-\varepsilon_1), p)$.

Ideally, we would now like to directly apply Lemma~\ref{lemma: many
squares in large components} in $\Gamma_1$.  However, 
to ensure the stochastic domination in
Lemma~\ref{lemma: supercritical offspring distribution}, we started
our exploration process from an induced $C_4$ rather than a non-edge
--- so we know that $\Omega(n^4p^4)$ induced $C_4$s are part of
square-components of order at least $(\log n)^4(1+o(1))$ whereas we
want to show $\Omega(n^2)$ non-edges lie in such components.  Since
some non-edges could have as many as $\Omega(\log n)$ common neighbors
in $\Gamma_1$, it would in principle be possible for $p$ of order
$n^{-1/2}$ that, for example, the collection of the diagonals of the induced
$C_4$s contained in such ``large'' components consists of a set of only
$O(n^2/(\log n)^2)$ non-edges.  We must thus rule out situation.

The simplest way to do this is to run through our proof of Lemma~\ref{lemma: many squares in large components} again, but this time for the variant of our exploration process from Section~\ref{subsection: supercritical exploration process} where we begin with an arbitrary non-edge $v_1v_2$ of $\Gamma_1$, set $D_0=\{v_1,v_2\}$, $A_0=\{v_1v_2\}$ and $R_0=\emptyset$. We say such an exploration \emph{survives infancy} if at the first time-step the pair $v_1v_2$ discovers a set $Z_1$ of joint neighbors that spans at least one non-edge $v_3v_4$ of $\Gamma$. 

For $p$ in the range we are considering  the random graph $\Gamma$ a.a.s.\ does not contain a complete graph on $6$ vertices, and we can use this to give a constant order lower bound on $\theta _S$, the probability the process survives infancy:
\begin{align*}
\theta_S&\ge \Pb\left(\vert \Gamma_{v_1}\cap\Gamma_{v_2}\vert\geq 6\ | v_1v_2\notin E(\Gamma)\right)-\mathbb{P}\left(\Gamma \textrm{ contain a clique on $6$ vertices}\right)\\
&\geq \binom{n-2}{6}p^{12}(1-p^2)^{n-8} -\binom{n}{6}p^{\binom{6}{2}} = \Theta((np^2)^6) = \Theta(1).
\end{align*}

Conditional on surviving infancy, by Lemma~\ref{lemma: supercritical offspring distribution} the exploration process from $v_3v_4$ stochastically dominates a supercritical branching process $\mathbf{W}$ with extinction probability $\theta_e=\theta_e((1-\varepsilon_1)n, p)$. Applying Wald's identity, this implies that the number  $N_v$ of non-edges of $\Gamma_1$ that belong to square-components of order at least $(\log n)^4$ satisfies
\[\mathbb{E}(N_v)\geq \binom{(1-\varepsilon_1)n}{2}(1-p)\theta_S (1-\theta_e)=\Omega(n^2).\]
We now bound $\mathbb{E}(N_V)^2$ much as we did in Lemma~\ref{lemma: many squares in large components}. Given a pair $xy\in V(\Gamma)^{(2)}$, write $E_{xy}$ for the event that $xy$ is a non-edge and that our exploration process from $xy$ terminates with a large stop. Claim~\ref{claim: variance argument, bound on contribution from disjoint S, S'} from the proof of Lemma~\ref{lemma: many squares in large components} shows mutatis mutandis that if $\{x,y\}\cap \{x',y'\}=\emptyset$ then
\[ \mathbb{E}\left(E_{xy}E_{x'y'}\right)=\mathbb{E}(E_{xy})\mathbb{E}(E_{x'y'}) +O\left((\log n)^{9}n^{-1}\right).\]
For non-disjoint pairs $\{x,y\}$ and $\{x',y'\}$, the situation is actually easier than it was in Claim~\ref{claim: variance argument, bound on contribution from non-disjoint S, S'}: such pairs contribute at most $2n\mathbb{E}N_v$ to $\mathbb{E}\left((N_v)^2\right)$. Thus 
\[\mathrm{Var}(N_v)= O\left(n \mathbb{E}(N_v)\right)= O\left( n^{-1} (\mathbb{E}(N_v))^2\right),\] and we conclude that with probability $1-O(n^{-1})$ there are  $(1+o(1))\mathbb{E}(N_v)=\Omega(n^2)$ non-edges contained in square-components of order at least $(\log n)^4$. The Corollary then follows from a suitable choice of %the function $f$ and 
the constant $\varepsilon_2$.
\end{proof}

\subsection{A connecting lemma}\label{section: connecting lemma}
The key to our sprinkling argument is the following, which we use to
connect the somewhat large square-components into even larger
square-components. We connect square-components by sprinkling in vertices, and looking for complete bipartite graphs with bipartition $\{x_1, x_2, y_1, y_2\}\sqcup\{z_1,z_2\}$, where $x_1x_2$, $y_1y_2$ are non-edges in distinct square-components, and $z_1z_2$ is a non-edge inside the set of newly sprinkled vertices --- see Figure~\ref{fig:connecting} below.

Recall that a $p$-random bipartite graph with partition $V\sqcup W$ is a graph on the vertex set $V\sqcup W$ obtained by including each pair $\{v,w\}$ with $v\in V, w\in W$ as an edge independently at random with probability $p$. 
\begin{lem}[Connecting Lemma]\label{lemma: connecting}
	Let $\lambda>\lambda_c$, $\delta \in (0, \frac{1}{2})$ and $\varepsilon_1, \varepsilon_2>0$ be fixed. Let $V$ be a set of  $(1-\delta)n$ vertices, and $W$ be a set of $\frac{\varepsilon_1n}{2\log_2 n}$  vertices disjoint from $V$. Suppose we are given disjoint subsets $C_1, C_2, \ldots C_r$ of $V^{(2)}$ and a subset $S \subseteq W^{(2)}$ with the following properties:
	\begin{enumerate}
		\item $\vert S\vert \geq \frac{(\varepsilon_1)^2n^2}{8(\log_2 n)^2} $;
		\item $\vert C_i\vert \geq M$ for every $i$: $1\leq i \leq 
		r$, and some $M$ satisfying: $(\log n)^4\leq M\leq \frac{\varepsilon_2}{4}n^2$;
		\item $\sum_i \vert C_i \vert \geq \varepsilon_2 n^2$.
	\end{enumerate} 	
	Let $p=p(n)$ be an edge probability with
	\[	\lambda \frac{1}{\sqrt{n}} < p(n) < 5\frac{\sqrt{\log n}}{\sqrt{n}}.\]
	Consider the $p$-random bipartite graph $B_p(V, W)$ with bipartition $V\sqcup W$. Let \textrm{Boost} be the event that for every $C_i$ with $\vert C_i \vert \leq 2M$ there exists $C_j\neq C_i$ and a triple $(x_1x_2, y_1y_2, z_1z_2) \in C_i \times C_j \times S$ such that the restriction $B_p(\{x_1,x_2,y_1,y_2\}, \{z_1,z_2\} )$ of $B_p(V,W)$ to $\{x_1,x_2,y_1,y_2\}\sqcup\{z_1,z_2\}$ is complete. Then for all $n$ sufficiently large we have
	\[\mathbb{P}(\textrm{Boost})\geq 1- \exp \left( -\frac{\varepsilon_2(\varepsilon_1)^2}{2^{16}}(\log n)^2 \right). \]
	
\end{lem}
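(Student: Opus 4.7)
The plan is a first-and-second-moment argument using Janson's inequality, applied separately to each small component $C_i$ and then union-bounded. Fix a small $C_i$ with $|C_i|\in[M,2M]$ and set $C_{\neq i}:=\bigcup_{j\neq i}C_j$, so that $|C_{\neq i}|\ge \varepsilon_2 n^2-2M\ge \varepsilon_2 n^2/2$. Let $X_i$ count the number of triples $T=(x_1x_2,y_1y_2,z_1z_2)\in C_i\times C_{\neq i}\times S$ with $\{x_1,x_2\}\cap\{y_1,y_2\}=\emptyset$ such that all $8$ bipartite edges from $\{x_1,x_2,y_1,y_2\}$ to $\{z_1,z_2\}$ are present in $B_p(V,W)$. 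Showing that $\Pr(X_i=0)\le \exp(-c_1(\log n)^2)$ for a suitable positive constant $c_1$, then union-bounding over the at most $n^2/(\log n)^4$ small components (a prefactor of $e^{O(\log n)}$, easily absorbed), will prove the lemma.

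For the first moment, the number of vertex-sharing pairs $(x_1x_2,y_1y_2)\in C_i\times C_{\neq i}$ is at most $2n|C_i|$ via the trivial degree bound $d_{C_{\neq i}}(v)\le n$, which is $o(|C_i||C_{\neq i}|)$ since $|C_{\neq i}|=\Omega(n^2)$. Each admissible disjoint triple contributes $p^8$, so
\[
\mu_i:=\mathbb{E}[X_i]\ge (1-o(1))|C_i||C_{\neq i}||S|p^8 \ge \frac{\lambda^8\varepsilon_2\varepsilon_1^2}{32}(\log n)^2.
\]

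The bulk of the work is bounding the Janson correction term $\Delta_i=\sum_{T\ne T',\,T\sim T'}\Pr(B_T\cap B_{T'})$, where $T\sim T'$ means the $8$-edge sets of $T$ and $T'$ share at least one edge (equivalently, $T$ and $T'$ share both a $V$-vertex and a $W$-vertex). A case analysis on $(k_V,k_W)=(|V(T)\cap V(T')|,|\{z_1,z_2\}\cap\{\tilde z_1,\tilde z_2\}|)$ identifies the dominant contribution: $k_W=2$ (i.e.\ $z=\tilde z$) together with one of the $V$-pairs coinciding (say $\tilde x_1\tilde x_2=x_1x_2$ with $\tilde y_1\tilde y_2\in C_{\neq i}$ arbitrary, plus the symmetric case). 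Splitting by how many of $\tilde y_1,\tilde y_2$ lie in $\{x_1,x_2,y_1,y_2\}$ (joint probabilities $p^{12}$, $p^{10}$, $p^8$ respectively), the $p^{12}$ subcase wins thanks to $|C_{\neq i}|p^4=\Theta(1)$, giving $\Delta_i^{\mathrm{dom}}\le 2|S||C_i||C_{\neq i}|^2 p^{12}$. Dividing by $\mu_i^2\gtrsim (|C_i||C_{\neq i}||S|p^8)^2$ and using $|S|p^4=\Theta(\varepsilon_1^2/(\log n)^2)$ yields $\Delta_i^{\mathrm{dom}}/\mu_i^2=O(1/(\lambda^4\varepsilon_1^2(\log n)^2))$; the remaining subcases (fewer shared vertices, $k_W=1$, etc.) reduce to similar bounds with an extra factor of $n^{-\Omega(1)}$, and are therefore negligible. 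In particular $\Delta_i=O(\mu_i^2/(\log n)^2)$ and hence $\Delta_i/\mu_i=O(1)$.

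Applying Janson's inequality in the form $\Pr(X_i=0)\le \exp(-\mu_i^2/(2(\mu_i+\Delta_i)))$ (see e.g.~\cite[Theorem 8.1.1]{AlonSpencer:book}) then gives $\Pr(X_i=0)\le \exp(-c_1(\log n)^2)$ with $c_1=\Omega(\lambda^8\varepsilon_1^2\varepsilon_2)$. Since $\lambda>\lambda_c$ forces $\lambda^8>(\sqrt6-2)^4>2^{-5}$, careful bookkeeping of the constants, together with absorption of the union bound factor $n^2/(\log n)^4=e^{O(\log n)}$, yields the required bound $\exp(-\varepsilon_2\varepsilon_1^2(\log n)^2/2^{16})$. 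The main obstacle is the second-moment case analysis: while each subcase is elementary, one must enumerate all overlap types, verify that the $p^{12}$ subcase is dominant, and check that the remaining contributions do not spoil the exponent; once $\Delta_i$ is under control, the rest is a routine application of Janson.
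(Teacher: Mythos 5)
Your proposal is correct and follows essentially the same route as the paper: for each small component one applies Janson's inequality to the family of connecting triples, bounds $\mu$ from below by $\Omega((\log n)^2)$ and $\Delta$ via an overlap case analysis, and finishes with a union bound over the at most $n^2/M$ small components (your restriction to vertex-disjoint $V$-pairs is a mild simplification of the paper's analysis, which also tracks the $6$-edge configurations arising from overlapping pairs). Two side remarks are inaccurate but harmless: the claim $\Delta_i/\mu_i=O(1)$ fails when $M$ is large (only $\mu_i=\Omega((\log n)^2)$ and $\Delta_i=O(\mu_i^2/(\log n)^2)$ are true, and these suffice), and the other overlap types are not all smaller by $n^{-\Omega(1)}$ --- e.g.\ when $np^2=\Theta(1)$ the configurations sharing the $W$-pair and $3$ or $4$ $V$-vertices contribute at the same order as your $p^{12}$ term, and when $M$ and $p$ are large the ``one shared $V$-vertex'' configurations can exceed it per triple --- though every type does satisfy the needed bound $O(\mu_i^2/(\log n)^2)$, so the conclusion stands.
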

The proof of the connecting lemma relies on a celebrated inequality of Janson and some careful book-keeping.	

\begin{prop}[The extended Janson inequality~\cite{JansonLuczakRucinski11}]\label{proposition: Janson inequality}
	Let $U$ be a finite set and $U_q$ a $q$-random subset of $U$ for some $q\in [0,1]$. Let $\mathcal{F}$ be a family of subsets of $U$, and for every $F\in \mathcal{F}$ let $I_F$ be the indicator function of the event $\{F\subseteq U_q\}$. Set $I_{\mathcal{F}}=\sum_{F\in \mathcal{F}} I_F$, and let $\mu=\mathbb{E} I_{\mathcal{F}}$ and $\Delta=\sum_{F,F' \in \mathcal{F}: \ F\cap F\ \neq \emptyset} \mathbb{E} (I_F I_F')$. Then
	\[ \mathbb{P}\left(I_{\mathcal{F}}=0\right)\leq \exp \left(-\frac{\mu^2}{2\Delta}\right).\]
\end{prop}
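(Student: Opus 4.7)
The plan is to apply the extended Janson inequality (Proposition~\ref{proposition: Janson inequality}) separately to each $C_i$ with $|C_i|\leq 2M$, then union-bound over the at most $n^2/M\leq n^2$ small components. Fix such an~$i$ and set $\tilde C_i=\bigsqcup_{j\neq i}C_j$, so that $|\tilde C_i|\geq\varepsilon_2 n^2/2$ by assumptions~(2) and~(3). Let $\mathcal T_i$ denote the collection of triples $T=(x_1x_2,y_1y_2,z_1z_2)\in C_i\times\tilde C_i\times S$ with $\{x_1,x_2\}\cap\{y_1,y_2\}=\emptyset$; associate to each $T$ the $8$-element subset $F_T\subseteq V\times W$ realising $K_{4,2}$ between $\{x_1,x_2,y_1,y_2\}$ and $\{z_1,z_2\}$ and the indicator $I_T=\mathbf{1}[F_T\subseteq E(B_p(V,W))]$. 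Since $I_T=1$ for any $T\in\mathcal T_i$ witnesses the Boost event for $C_i$, it suffices to apply Janson to $\mathcal F_i=\{F_T:T\in\mathcal T_i\}$ with $q=p$ and to control $\mu_i=\mathbb E I_{\mathcal F_i}=|\mathcal T_i|p^8$ and $\Delta_i=\sum_{T,T':F_T\cap F_{T'}\neq\emptyset}\mathbb E(I_TI_{T'})$.

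For $\mu_i$: since at most $2n$ pairs $y_1y_2\in\tilde C_i$ share a vertex with any given $x_1x_2$, we have $|\mathcal T_i|\geq(1-o(1))|C_i||\tilde C_i||S|$, so combining with $|C_i|\geq M\geq(\log n)^4$, the hypothesised lower bounds on $|\tilde C_i|$ and $|S|$, and $p\geq\lambda/\sqrt n$, yields $\mu_i\geq c_1(\log n)^2$ for an explicit constant $c_1=c_1(\lambda,\varepsilon_1,\varepsilon_2)>0$.

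The principal task is bounding $\Delta_i$. I decompose $\Delta_i$ according to the overlap pair $(a,b)=\bigl(|\{x_1,x_2,y_1,y_2\}\cap\{x'_1,x'_2,y'_1,y'_2\}|,\,|\{z_1,z_2\}\cap\{z'_1,z'_2\}|\bigr)$ of two triples; since $|F_T\cap F_{T'}|=ab$, one has $\mathbb E(I_TI_{T'})=p^{16-ab}$. In the borderline case $(a,b)=(4,2)$, $F_{T'}=F_T$ and $T'$ is determined by a repartitioning of the four $V$-vertices of $T$ into two disjoint pairs; at most two alternative partitions can satisfy the constraints $x'_1x'_2\in C_i$ and $y'_1y'_2\in\tilde C_i$, so this case contributes at most $2\mu_i$. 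For every other $(a,b)$ with $ab\geq 1$, I count the $T'$ with the given overlap structure using the universal bound $\sum_{v\in V^*(T)}d_{C_i}(v)\leq 4n$ (and similarly for $\tilde C_i$ and $S$), which follows from the trivial fact that $\max_v d\leq|V|-1\leq n$ together with $|V^*(T)|=4$. This is the step where care is essential: the naive bound $\sum d\leq 2|C_i|$ is too loose in the sub-cases $(a,b)\in\{(2,2),(3,2)\}$, where the contribution to $\Delta_i$ is already of the same order as $\mu_i$, and only the max-degree bound controls it. Substituting $|W|=\Theta(n/\log n)$, $p\in[\lambda/\sqrt n,\,5\sqrt{(\log n)/n}]$, and the given bounds on $|C_i|,|\tilde C_i|,|S|$, one verifies case-by-case that $\Delta_{(a,b)}\leq C'\mu_i$ for $(a,b)\in\{(2,2),(3,2),(4,2)\}$ and $\Delta_{(a,b)}=o(\mu_i)$ for every other admissible $(a,b)$, yielding $\Delta_i\leq C\mu_i$ for a universal constant $C>0$.

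Janson now gives $\Pr(I_{\mathcal F_i}=0)\leq\exp\!\bigl(-\mu_i^2/(2\Delta_i)\bigr)\leq\exp(-c(\log n)^2)$ with $c=c_1/(2C)$, and the union bound over the at most $n^2$ small components $C_i$ yields
\[
\Pr(\mathrm{Boost}^c)\leq n^2\exp(-c(\log n)^2)\leq\exp\!\left(-\tfrac{\varepsilon_1^2\varepsilon_2(\log n)^2}{2^{16}}\right)
\]
for $n$ sufficiently large, since $2\log n=o((\log n)^2)$ absorbs the union-bound factor and the numerical inequality $c>\varepsilon_1^2\varepsilon_2/2^{16}$ is straightforward to verify from the explicit forms of $c_1$ and $C$ using $\lambda^8>\lambda_c^8>2^{-5}$. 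The main technical obstacle throughout is the case-by-case bookkeeping for $\Delta_i$, where one must use the max-degree bound $\sum_{v\in V^*(T)}d(v)\leq 4n$ rather than the more obvious total-degree bound, in order to handle the sub-cases where the concentration of $C_i$ or $\tilde C_i$ is otherwise problematic.
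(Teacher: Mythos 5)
Your proposal does not prove the statement in question. The statement is the extended Janson inequality itself --- a general concentration/correlation inequality about a $q$-random subset $U_q$ of an abstract finite ground set $U$ and a family $\mathcal F$ of subsets of $U$. What you have written is instead a proof of the Connecting Lemma (Lemma~\ref{lemma: connecting}), and your very first sentence \emph{invokes} Proposition~\ref{proposition: Janson inequality} as a black box. So with respect to the target statement your argument is vacuous: you assume the conclusion you were asked to establish and use it to derive a downstream application. Nothing in your write-up engages with the actual content of the inequality --- there is no appeal to the FKG/Harris correlation structure of the events $\{F\subseteq U_q\}$, no exponential-moment or conditioning argument, and no optimization step; the quantities $\mu$ and $\Delta$ appear only as parameters to be estimated for a specific family of connecting triples, not as objects whose relationship to $\mathbb{P}(I_{\mathcal F}=0)$ must be proven.

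For the record, the paper does not prove this proposition either: it is quoted verbatim from the literature (Janson--\L uczak--Ruci\'nski), as is standard. A genuine proof would proceed along the classical lines: first establish the basic Janson bound $\mathbb{P}(I_{\mathcal F}=0)\leq \exp(-\mu+\Delta/2)$ via the Boppana--Spencer conditioning argument (bounding $\mathbb{P}(\overline{A_F}\mid \bigwedge_{F'<F}\overline{A_{F'}})$ using Harris's inequality for the up-sets and down-sets involved), and then obtain the ``extended'' form $\exp(-\mu^2/(2\Delta))$ in the regime $\Delta\geq\mu$ by applying the basic bound to a random subfamily of $\mathcal F$ in which each set is retained independently with probability $\mu/\Delta$, and optimizing. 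If your intent was to prove the Connecting Lemma, the estimates you sketch are broadly in the spirit of the paper's proof of that lemma (a case analysis of overlapping connecting triples to bound $\Delta$), but that is a different statement, and your submission leaves the proposition at hand entirely unproved.
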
		
\begin{proof}[Proof of Lemma~\ref{lemma: connecting}]
	Fix $C_i$ with $M\leq \vert C_i \vert \leq 2M$. Set $M'=\min(2M, n)$. Let $\mathcal{F}_0$ denote the collection of \emph{connecting triples} $(x_1x_2, y_1y_2, z_1z_2) \in C_i \times \bigcup_{j\neq i}C_j  \times S$. Further let 
	\[\mathcal{F}=\{\{x_iz_j: \ i,j \in [2] \}\cup \{y_iz_j: \ i,j \in [2] \}: \ (x_1x_2, y_1y_2, z_1z_2) \in \mathcal{F}_0\}.\]
	Observe that the elements of $\mathcal{F}$ are subsets of either $6$ 
	or $8$ edges (depending on whether the pairs $x_1x_2$ and $y_1y_2$ overlap or not) of the 
	complete bipartite graph $B(V,W)$ with bipartition $V\sqcup W$. 
	We shall apply Janson's inequality to $\mathcal{F}$ to give an 
	upper bound on the probability that $C_i$ does not connect to $\bigcup_{j\neq i} C_j$ via a pair of squares of $B_p(V,W)$. To this end, we must compute and bound the $\mu$ and $\Delta$ parameters for $\mathcal{F}$. The first of these is straightforward:
	\begin{align}\label{inequality: mu bound}
	\mu:&=\mathbb{E} I_{\mathcal{F}} = \sum_{F \in \mathcal{F}} \mathbb{E} I_F \geq \vert C_i\vert . \bigl\vert \bigcup_{j\neq i} C_j \bigr\vert . \vert S\vert p^8
	\geq M \frac{\varepsilon_2(\varepsilon_1)^2}{8} \frac{n^4}{(\log n)^2}p^8
	\end{align}
	To bound the $\Delta$ parameter, fix a connecting triple $t=(x_1x_2, y_1y_2, z_1z_2)\in C_i \times \bigcup_{j\neq i}C_j  \times S$, and consider the contribution to $\Delta$ made by pairs $(t,t')$ of connecting triples that share at least one edge of $B(V,W)$; call such pairs of connecting triples \emph{dependent}.

	Write $L(t)$ for the set $\{x_1, x_2, y_1, y_2\}$ (which can have size either $4$ or $3$ --- the latter if one of the $x_i$ is equal to one of the $y_j$) and $R(t)$ for the pair $\{z_1,z_2\}$. Also let $F_t\in \mathcal{F}$ be the collection of edges of $B(V,W)$ from $L(t)$ to $R(t)$. Clearly if $L(t)\cap L(t')=\emptyset$ or $R(t)\cap R(t')=\emptyset$, then $(t,t')$ do not form a pair of dependent connecting triples.

Fix a connecting triple $t$. For $(i,j)\in [4]\times [2]$, let $D_{i,j}(t)$ denote the collection of connecting triples $t'$ with $\vert L(t)\cap L(t')\vert = i$ and $\vert R(t)\cap R(t')\vert=j$. Further let $D_{i,j}^a(t)$ and $D_{i,j}^b(t)$ denote the collection of $t'$ in $D_{i,j}(t)$ with $\vert L(t)\vert=4$ and $\vert L(t')\vert=3$ respectively.  We shall bound the sizes of the sets  $D_{i,j}^a(t)$ and $D_{i,j}^b(t)$. Note to begin with that there are at most $\frac{2\varepsilon_1n}{\log n}$ ways of deleting a vertex in $R(t)$ and replacing it by a different vertex in $W$. In particular, for all connecting triples $t$ and all $i\in [\vert L(t)\vert]$, we have
\begin{align*}
\vert D_{i, 1}^a(t)\vert \leq \vert D_{i,2}^a(t)\vert \cdot \frac{2\varepsilon_1n}{\log n} && \textrm{ and }&& \vert D_{i, 1}^b(t)\vert \leq \vert D_{i,2}^b(t)\vert \cdot \frac{2\varepsilon_1n}{\log n}.
\end{align*}
Thus we may focus on bounding the sizes of $D_{i,j}^a(t)$ and $D_{i,j}^b(t)$ in the case where $j=2$.

\noindent \textbf{Case 1, $\vert L(t)\vert=4$:}
	\begin{itemize}
		\item there are at most $6$ ways of splitting $L(t)$ into a
		pair from $C_i$ and a pair from $\bigcup_{j\neq i} C_j$, and
		at most $24$ ways of deleting a vertex from $L(t)$ and viewing
		the remaining $3$ vertices as the union of a pair from $C_i$
		and an (overlapping) pair from $\bigcup_{j\neq i} C_j$, whence $\vert D_{4,2}^a(t)\vert \leq 6$ and $\vert D_{3,2}^b(t)\vert \leq 24$;
		
		\item there are at most $4n$ ways of deleting one vertex from
		$L(t)$ and replacing it by another vertex from $V$. As noted above, there are most $6$ ways of splitting the resulting $4$-set into a pair from $C_i$ and a pair from $\bigcup_{j\neq i} C_j$, whence $\vert D_{3,2}^a(t)\vert \leq 24n$; 
		
		 \item there are at most $6 (n^2/2)=3n^2$ ways of deleting two
		vertices from $L(t)$ and replacing them by two other vertices
		from $V$, whence (similarly to the above) we have $\vert D_{2,2}^a(t)\vert \leq 18n^2$; further, there are at most $6n$ ways of deleting a pair of vertices from $V$ and replacing them by a single vertex from $V$, whence (similarly to the above, since there are most $6$ ways of viewing three vertices of a pair from $C_i$ and an (overlapping) pair from $\bigcup_{j\neq i} C_j$) we get  $\vert D_{2,2}^b(t)\vert \leq 36n$; 
		
		\item since $C_i$ contains at most $2M$ pairs, there are at
		most $4(n^2/2)M'=2n^2M'$ ways of deleting three vertices in $L(t)$ and
		replacing them by another three vertices from $V$ in such a way
		that the resulting set can still be viewed as the union of a
		pair from $C_i$ and a pair from $\bigcup_{j\neq i} C_j$, whence (similarly to the above) we have $\vert D_{1,2}^a(t)\vert\leq 12n^2M'$; further and similarly there are at most $4M+ 4M'n\leq 8M'n$ ways of deleting three vertices in $L(t)$ and replacing them by a pair of vertices from $V$, whence (as before) we have $\vert D_{1,2}^b(t)\vert \leq 48 M'n$; 
			\end{itemize}
\noindent \textbf{Case 2, $\vert L(t)\vert=3$:}
	\begin{itemize}
		
		\item there are at most $6$ ways of splitting $L(t)$ into a pair from from $C_i$ and a pair from $\bigcup_{j\neq i} C_j$, whence $\vert D_{3,2}^b(t)\vert \leq 6$; further there are at most $6n$ ways of adding a vertex to $L(t)$ and splitting the resulting $4$-set into two disjoint pairs, whence $\vert D_{3,2}^a(t)\vert \leq 6n$;
	
		\item  there are at most $3n$ ways of deleting one vertex from $L(t)$ and replacing it by another vertex from $V$, whence (as above) $\vert D_{2,2}^b(t)\vert \leq 18n$; further, there are at most $3(n^2/2)$ ways of deleting one vertex from $L(t)$ and replacing it by a pair from $V$, whence (as above) $\vert D_{2,2}^a(t)\vert \leq 9n^2$;
		
		\item since $C_i$ contains at most $2M$ pairs, there are at most $3M'n$ ways of deleting two vertices in $L(t)$ and replacing them by another two vertices from $V$ in such a way that the resulting set can still be viewed as the union of a pair from $C_i$ and a pair from $\bigcup_{j\neq i} C_j$, whence $\vert D_{1,2}^b(t)\vert \leq 18M'n$; similarly, there are at most $3M'(n^2/2)$ ways of deleting two vertices in $L(t)$ and replacing them by a triple of vertices from $V$ in such a way that the resulting set can be viewed as the disjoint union of a pair from $C_i$ and a pair from $\bigcup_{j\neq i} C_j$, whence $\vert D_{1,2}^a(t)\vert \leq 9M'n^2$. .
	\end{itemize}

	Given $t' \in D_{i,j}^a(t)$ and considering the edges between $L(t)\cup L(t')$ and $R(t)\cup R(t')$, we see that 
	\begin{align}\label{equality: extra edges in (i,j)-a-intersecting triple}
	\mathbb{E}I_{F_t}I_{F_{t'}}= \mathbb{E} I_{F_t} p^{2(4-i) +i(2-j)}=  \mathbb{E} I_{F_t} p^{8-ij}.
	\end{align}
	Similarly, for $t'\in D^b_{i,j}(t)$ we have 
	\begin{align}\label{equality: extra edges in (i,j)-b-intersecting triple}
	\mathbb{E}I_{F_t}I_{F_{t'}}= \mathbb{E} I_{F_t} p^{2(3-i) +i(2-j)}=  \mathbb{E} I_{F_t} p^{6-ij}.
	\end{align}
	With the bounds on the size of $D_{i,j}^a(t)$ and $D_{i,j}^b(t)$ derived above and equalities \eqref{equality: extra edges in (i,j)-a-intersecting triple} and \eqref{equality: extra edges in (i,j)-b-intersecting triple} in hand, we are now ready to bound the contribution to $\Delta$ from a connecting triple $t$.

	\noindent \textbf{Case 1: $\vert L(t)\vert =4$.} 
	\begin{align}
	\sum\Bigl\{ \mathbb{E}I_{F_t}I_{F_{t'}}\Bigr.&\Bigl.: \ (t,t') \textrm{ form a pair of dependent connecting triples}\Bigr\}\notag \\
	=&\ \mathbb{E}I_{F_t} \left( \sum_{i=1}^4 \left(\vert D_{i,2}^a\vert p^{8-2i} + \vert D_{i,2}\vert^b p^{6-2i} \right) + \sum_{i=1}^4 \left(\vert D_{i,1}^a\vert p^{8-i} + \vert D_{i,1}^b\vert p^{6-i} \right)\right)\notag\\
	\leq&\  \mathbb{E}I_{F_t} \Bigl( \left((12n^2M'p^6+ 48M'np^4)+ (18n^2p^4+36np^2)+ (24n p^2 +24) + (6)\right)\Bigr. \notag\\
	&\Bigl.  \  \ +\frac{2\varepsilon_1 n}{\log n}\left((12n^2M'p^7+ 48M'np^5)+ (18n^2p^6+36np^4)+ (24n p^5 +24p^3) + (6p^4)\right)   \Bigr)\notag\\
	\leq&\  \left(\mathbb{E}I_{F_t} \right)2^{10}(np^2)^3 \max\left(1, \frac{\varepsilon_1}{\log n} M'p \right). \label{inequality: bound on contribution from a triple t with L(t) of size 4}
	\end{align}
\noindent (Note in the last line we use the fact that $np^2\geq (\lambda_c)^2>1/4$, whence $(np)^{-1}\leq 4p$.)	
	
\noindent	\textbf{Case 2: $\vert L(t)\vert =3$.} 
	
	\begin{align}
	\sum\Bigl\{ \mathbb{E}I_{F_t}I_{F_{t'}}\Bigr.&\Bigl.: \ (t,t') \textrm{ form a pair of dependent connecting triples}\Bigr\}\notag \\
	=&\ \mathbb{E}I_{F_t} \left( \sum_{i=1}^3 \left(\vert D_{i,2}^a\vert p^{8-2i} + \vert D_{i,2}^b\vert p^{6-2i} \right) + \sum_{i=1}^3 \left(\vert D_{i,1}^a\vert p^{8-i} + \vert D_{i,1}^b\vert p^{6-i} \right)\right)\notag\\
	\leq&\  \mathbb{E}I_{F_t} \Bigl( \left((9n^2M'p^6+ 18nM'p^4)+ (9n^2p^4+18np^2)+ (6n p^2 +6)\right)\Bigr. \notag\\
	&\Bigl.  \  \ +\frac{2\varepsilon_1 n}{\log n}\left((9n^2M'p^7+ 18nM'p^5)+ (9n^2p^6+18np^4)+ (6n p^5 +6p^3)\right)  \Bigr)\notag\\
	\leq&\  \left(\mathbb{E}I_{F_t} \right)2^{10}(np^2)^3 \max\left(1, \frac{\varepsilon_1}{\log n} M'p \right).\label{inequality: bound on contribution from a triple t with L(t) of size 3}
	\end{align}
	
	Together, inequalities~\eqref{inequality: bound on contribution from a triple t with L(t) of size 4} and \eqref{inequality: bound on contribution from a triple t with L(t) of size 3} yield that 
	\begin{align}\label{inequality: Delta bound}
	\Delta \leq \frac{1}{2}\mu. 2^{10}(np^2)^3 \max\left(1, \frac{\varepsilon_1}{\log n} M'p \right).
	\end{align}
	Applying the Extended Janson Inequality, Proposition~\ref{proposition: Janson inequality}, together with the bounds~\eqref{inequality: mu bound} and \eqref{inequality: Delta bound} on $\mu$ and $\Delta$, we get:
	\begin{align}\label{inequality: Janson bound for one component}
	\mathbb{P}(I_{\mathcal{F}}=0)&\leq \exp \left(-\frac{\mu^2}{2\Delta}\right)\leq \exp \left(-\frac{\mu}{2^{10}(np^2)^3 \max\left(1, \frac{\varepsilon_1}{\log n} M'p \right)}\right)\notag\\
	&\leq \exp \left(- \frac{\varepsilon_2(\varepsilon_1)^2}{2^{13}}\frac{M(np^2)}{(\log n)^2 \max\left(1, \frac{\varepsilon_1}{\log n} M'p \right)} \right)\notag \\
	&\leq \left\{ \begin{array}{ll}
	\exp \left( -\frac{\varepsilon_2(\varepsilon_1)^2}{2^{15}} \frac{M}{(\log n)^2}\right) & \textrm{if } 2M\leq \frac{p^{-1} \log n}{\varepsilon_1},\\
	\exp \left( -\frac{\varepsilon_2\varepsilon_1}{2^{15}} \frac{p^{-1}}{\log n}\right) & \textrm{if } \frac{p^{-1} \log n}{\varepsilon_1}\leq 2M \leq n,\\
	\exp \left( -\frac{\varepsilon_2\varepsilon_1}{2^{15}} \frac{p^{-1}M}{n\log n}\right) & \textrm{if }n \leq 2M.
	\end{array} \right.
	\end{align}
Now, the probability that $C_i$ fails to connect to $\cup\{C_j:\ j \neq i\}$ via a connecting triple is exactly the probability that $I_{\mathcal{F}}=0$.	Applying Markov's inequality together with \eqref{inequality: Janson bound for one component} and using our assumptions that  $M\geq (\log n)^4$ and that  $C_1, \ldots, C_r$ are disjoint, we have for $n$ sufficiently large that
	\begin{align*}
	\mathbb{P}(\textrm{Boost})&\geq 1-r \mathbb{P}(I_{\mathcal{F}}=0)\geq 1-\frac{n^2}{M} \mathbb{P}(I_{\mathcal{F}}=0)\geq 1- \exp \left( -\frac{\varepsilon_2(\varepsilon_1)^2}{2^{16}}(\log n)^2 \right),
	\end{align*}
	provided $n$ is sufficiently large. This concludes the proof of the connecting lemma.
\end{proof}	

\subsection{Sprinkling vertices}\label{section: vertex sprinkling}
With Lemma~\ref{lemma: connecting} in hand, we can return to the proof of Theorem~\ref{thm:1giant}. 
To complete the proof, we shall use a multiple-round vertex-sprinkling argument. We partition $\Gamma\in \G(n,p)$ into the union of 
\begin{enumerate}[label=(\roman*)]
	\item $\Gamma_1\in \G((1-\varepsilon_1)n, p)$ on $V_1=[(1-\varepsilon_1)n]$,
	\item $\Gamma_2\in \G(\varepsilon_1n, p)$ on $V_2=[n]\setminus V_1$, and
	\item a $p$-random bipartite graph $B=B_p(V_1, V_2)$ with bipartition $V_1\sqcup V_2$.
\end{enumerate}
We further partition $V_2$ into $2\log_2 n$ sets of size $\frac{\varepsilon_1 n}{2\log_2 n}$, $V_2=\sqcup_{i=1}^{2\log_2 n} V_{2,i}$ (and ignore rounding errors). We say that $\Gamma_1$ is a \emph{good configuration} if it satisfies the conclusion of Corollary~\ref{corollary: lots of non-edges in large-ish components (1-eps_1) version}, i.e., if at least $\varepsilon_2 n^2$ non-edges of $\Gamma_1$ lie in square-components in $\square(\Gamma_1)$ of order at least $M_0:=(\log n)^4$ (this is actually slightly weaker than what Corollary~\ref{corollary: lots of non-edges in large-ish components (1-eps_1) version} gives us, but is all we need here).

We shall condition on $\Gamma_1$ being a good configuration when we 
perform our vertex-sprinkling. By Corollary~\ref{corollary: lots of 
non-edges in large-ish components (1-eps_1) version}, this occurs 
with probability $1-O(n^{-1})$. A key observation is that the state 
of the edges in $\Gamma_2$ and $B$ are independent of our 
conditioning. Our strategy is then to reveal the $2\log_2 n$ 
sprinkling sets $V_{2,k}$ one by one, and use them to create bridges 
between ``somewhat large'' square-components and thereby increase the 
minimum order of all ``somewhat large'' square-components.

More precisely, before stage $k\geq 1$ we have revealed all the edges 
inside \[V_{1,k-1}:=V_1 \cup \left(\cup_{i=1}^{k-1} V_{2,i}\right).\] 
At this stage, we deem a square-component ``large'' if it contains at 
least $M_{k-1}$ non-edges of $\Gamma$, and ``very large'' if it contains at least $2M_{k-1}$ non-edges of $\Gamma$ (which constitute, as we recall, the vertices of the square-graph). Now in stage $k$, we reveal the set $S_k$ of non-edges of $\Gamma$ that lie inside $V_{2, k}$ and the edges between $V_{1,k}$ and $V_{2,k}$. We then merge components as follows: given two square-components $C_i$ and $C_j$, a \emph{connecting triple} is a triple $(x_1x_2, y_1y_2, z_1z_2)\in C_i \times C_j \times S_k$. Such a connecting triple is \emph{active} if all edges between the sets $\{x_1,x_2, y_1, y_2\}$ and $\{z_1,z_2\}$ are in $\Gamma$; in this case the components $C_i$ and $C_j$ lie inside the same square-component $C$ in $\square (\Gamma[V_{1,k}])$ (see Figure~\ref{fig:connecting}). In particular, if both $C_i$ and $C_j$ contained at least $M_{k-1}$ non-edges, then $C$ must contain at least $M_k=2M_{k-1}$ non-edges.

\begin{figure}\centering
	
		\labellist
%\small\hair 2pt
	\pinlabel {\textcolor{figred}{$y_1$}} [ ] at  0 107
	\pinlabel {\textcolor{figred}{$y_2$}} [ ] at 0 26
	\pinlabel {\textcolor{figblue2}{$x_1$}} [ ] at 0 270
	\pinlabel {\textcolor{figblue2}{$x_2$}} [ ] at 0 189
	\pinlabel {\textcolor{figgreen2}{$z_1$}} [ ] at 210 189
	\pinlabel {\textcolor{figgreen2}{$z_2$}} [ ] at 210 107
	\endlabellist
	\includegraphics[scale=0.5]{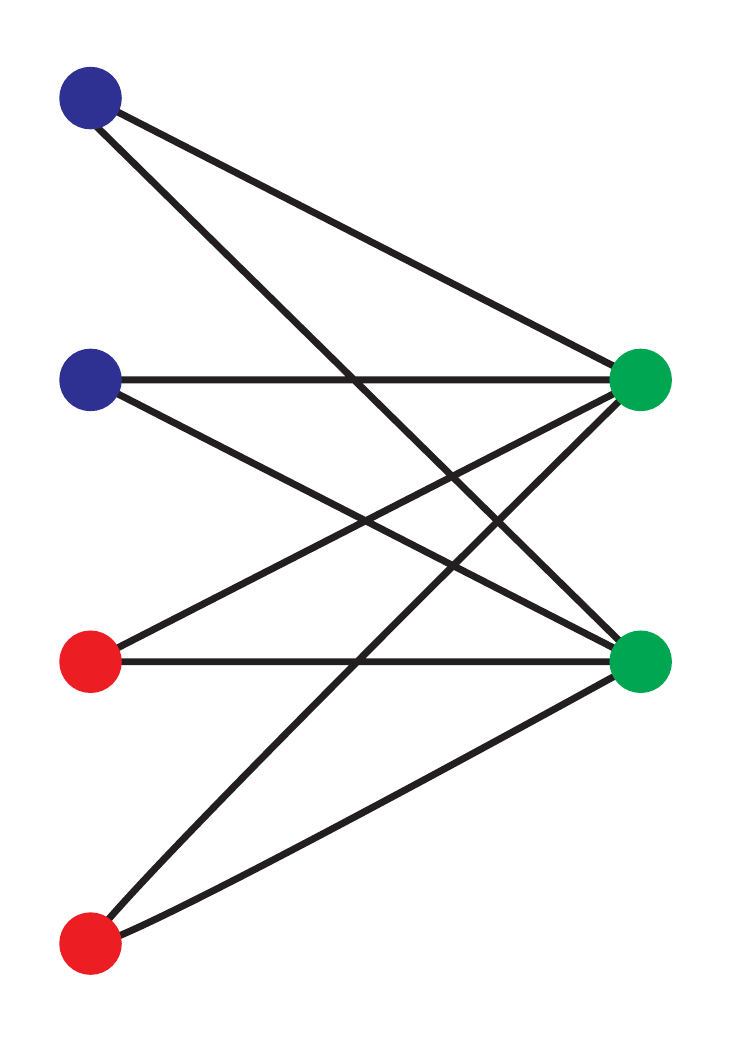}
	\caption{In this connecting triple, both $x_1z_1x_2z_2$ and $y_1z_1y_2z_2$ form induced copies of $C_4$, joining up the square-components containing the non-edges $x_1x_2$ and $y_1y_2$ via the non-edge of sprinkled vertices $z_1z_2$.}\label{fig:connecting}
\end{figure}
The connecting lemma we proved in the previous subsection immediately 
implies that with high probability at each stage $k$, all components 
which are ``large'' but not ``very large'' must join up with at least 
one other ``large'' component.  We make this explicit with a lemma below. Recall that throughout this section, $\lambda>\lambda_c$ is fixed and the edge-probability $p=p(n)$ satisfies $\lambda \frac{1}{\sqrt{n}} < p(n) < 5\frac{\sqrt{\log n}}{\sqrt{n}}$. Let $\varepsilon_1, \varepsilon_2>0$ be the constants whose existence is guaranteed by Corollary~\ref{corollary: lots of non-edges in large-ish components (1-eps_1) version}.

\begin{lem}[Sprinkling lemma]\label{lemma: sprinkling}
	Suppose that before stage $k$, at least $\varepsilon_2 n^2$ non-edges
	of $\Gamma[V_{1,k-1}]$ lie in square-components of order at least $M_{k-1}=
	2^{k-1} M_0$ in $\square (\Gamma[V_{1,k-1}])$.  Suppose $M_{k-1}\leq
	\frac{\varepsilon_2}{4}n^2$.  Then with probability at least
	\[ 1-2\exp \left( -\frac{\varepsilon_2(\varepsilon_1)^2}{2^{16}}(\log n)^2 \right)\] 
	when we have revealed the edges from $V_{2,k}$ to $V_{1,k-1}\cup
	V_{2,k}:=V_{1,k}$ at least $\varepsilon_2 n^2$ non-edges of $\Gamma[V_{1,
		k}]$ lie in square-components of order at least $M_{k}= 2M_{k-1}$
	in $\square (\Gamma[V_{1,k}])$.

	In particular, with probability at least
	\[1- 4\log_2 n\exp \left( -\frac{\varepsilon_2(\varepsilon_1)^2}{2^{16}}(\log n)^2 \right)  \]
	we have that starting from a good configuration $\Gamma[V_1]$, the sprinkling process described above has discovered within $2\log_2 n - 4\log_2 \log n$ steps a giant square-component containing at least $\varepsilon_2n^2$ non-edges, and all non-edges from $\Gamma[V_1]$ that lie inside components of $\square(\Gamma[V_1])$ of size at least $(\log n)^4.$
\end{lem}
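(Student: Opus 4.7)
The plan is in two parts: first establish the per-stage progress statement (the first bound in the lemma), then iterate it via a union bound to obtain the overall bound and the giant component.

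\emph{Per-stage step (at round $k$).} The core is to apply Lemma~\ref{lemma: connecting} with $V := V_{1,k-1}$, $W := V_{2,k}$, $M := M_{k-1}$, and the $C_i$'s being the square-components of $\square(\Gamma[V_{1,k-1}])$ of order at least $M_{k-1}$. Three hypotheses must be checked: (a) the size bound $|S_k| \geq \varepsilon_1^2 n^2/(8(\log_2 n)^2)$, where $S_k \subseteq V_{2,k}^{(2)}$ is the set of non-edges of $\Gamma$ inside the freshly sprinkled vertex set; (b) the disjointness and size bounds $(\log n)^4 \leq M_{k-1} \leq \varepsilon_2 n^2/4$; and (c) $\sum_i |C_i| \geq \varepsilon_2 n^2$. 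Items (b) and (c) follow from the hypotheses (recall $M_{k-1} = 2^{k-1}(\log n)^4$ and that square-components are disjoint as subsets of $V(\Gamma)^{(2)} \setminus E(\Gamma)$). For (a), since $|V_{2,k}| = \varepsilon_1 n/(2\log_2 n)$ and $p = o(1)$, the expected number of non-edges in $V_{2,k}$ is essentially $\varepsilon_1^2 n^2/(8(\log_2 n)^2)$, and a Chernoff bound yields (a) except with probability $\exp(-\Omega(n^2/(\log n)^2))$, well below the stated failure tolerance.

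Granted (a), Lemma~\ref{lemma: connecting} produces, with probability at least $1 - \exp(-\varepsilon_2 \varepsilon_1^2/2^{16}(\log n)^2)$, for every $C_i$ with $|C_i| \leq 2M_{k-1}$ a connecting triple $(x_1x_2, y_1y_2, z_1z_2) \in C_i \times C_j \times S_k$ whose $8$ crossing edges to $\{z_1, z_2\}$ are all present in $\Gamma$. The key observation is that the resulting $4$-cycles $x_1z_1x_2z_2$ and $y_1z_1y_2z_2$ are \emph{induced} in $\Gamma[V_{1,k}]$ (since $x_1x_2$, $y_1y_2$, $z_1z_2$ are non-edges and the remaining $8$ edges are precisely those we have verified) and share the diagonal $\{z_1, z_2\}$; hence $x_1x_2$, $z_1z_2$, $y_1y_2$ lie in a common square-component of $\square(\Gamma[V_{1,k}])$, so $C_i$ and $C_j$ merge. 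A ``medium'' $C_i$ (size in $[M_{k-1}, 2M_{k-1}]$) ends up in a component of size $\geq M_{k-1} + M_{k-1} = M_k$, while a ``very large'' $C_i$ (size $> 2M_{k-1}$) already has size $> M_k$. Crucially, adding vertices and bipartite edges never changes the adjacency status of any pair inside $V_{1,k-1}$, so every induced $4$-cycle of $\Gamma[V_{1,k-1}]$ remains induced in $\Gamma[V_{1,k}]$ and $\square(\Gamma[V_{1,k-1}]) \subseteq \square(\Gamma[V_{1,k}])$; the $\geq \varepsilon_2 n^2$ non-edges in large components of $\square(\Gamma[V_{1,k-1}])$ therefore end up in square-components of $\square(\Gamma[V_{1,k}])$ of size $\geq M_k$, preserving the invariant.

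\emph{Iteration and giant component.} Summing the failure probabilities from (a) and the Connecting Lemma yields the stated per-stage bound $2\exp(-\varepsilon_2\varepsilon_1^2/2^{16}(\log n)^2)$. Union-bounding this over the (at most $2\log_2 n$) sprinkling rounds gives the total failure probability $\leq 4\log_2 n \exp(-\varepsilon_2\varepsilon_1^2/2^{16}(\log n)^2)$. Iterating the per-stage invariant for $k^* = \lceil 2\log_2 n - 4\log_2\log n\rceil$ rounds drives $M_{k^*}$ up to $\Theta(n^2)$, so that a component of size $\geq M_{k^*}$ monopolises the non-edge budget: any second such component would force the total number of non-edges in large components to exceed the overall non-edge count, a contradiction. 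Thus the $\geq \varepsilon_2 n^2$ preserved non-edges collapse into a single giant square-component. Since $\square$ is monotone under vertex addition, any non-edge of $\Gamma[V_1]$ that originally lay in a $\square(\Gamma[V_1])$-component of size $\geq (\log n)^4$ belongs to a large component at every stage, and is therefore absorbed into the giant.

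The main obstacle is ensuring the inductive invariant propagates across stages without degradation: the monotonicity of $\square$ under vertex addition guarantees that pre-existing induced squares survive, while the Connecting Lemma forces every ``medium'' square-component to at least double, so both the non-edge mass and the doubling of the threshold $M_k$ are maintained round after round. Once this bookkeeping is in place, the conclusion follows by counting.
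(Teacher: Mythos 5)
Your per-stage argument is essentially the paper's: verify the hypotheses of Lemma~\ref{lemma: connecting} with $V=V_{1,k-1}$, $W=V_{2,k}$, $M=M_{k-1}$, get $|S_k|$ large via Chernoff, observe that an active connecting triple produces two induced squares sharing the diagonal $z_1z_2$ so that $C_i$ and $C_j$ merge, and use the monotonicity of $\square$ under vertex (as opposed to edge) addition to preserve the invariant. That part is correct and matches the paper.

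The gap is in your final ``monopolisation'' step. You claim that iterating the doubling for $k^*=\lceil 2\log_2 n-4\log_2\log n\rceil$ rounds drives $M_{k^*}$ up to $\Theta(n^2)$ with a constant large enough that a second component of that size is impossible. But the doubling step cannot be iterated that far: both the hypothesis of the Sprinkling Lemma and condition (2) of the Connecting Lemma require $M_{k-1}\le \frac{\varepsilon_2}{4}n^2$, so the last admissible application of the step only guarantees components of order at least $M_k=2M_{k-1}\le \frac{\varepsilon_2}{2}n^2$. Since $\varepsilon_2$ is a small constant coming from Corollary~\ref{corollary: lots of non-edges in large-ish components (1-eps_1) version}, the process can stall with up to roughly $\varepsilon_2^{-1}$ disjoint components each of order about $\frac{\varepsilon_2}{2}n^2$, and no counting contradiction forces them to coincide. (Indeed, if your argument worked one would get uniqueness of the giant for free, which the paper explicitly says it has \emph{not} established in general.) The paper closes this gap with an extra final round: after the doubling phase there are at most $\frac{1}{2}\varepsilon_2^{-1}$ surviving large components, and one more application of the Janson estimate~\eqref{inequality: Janson bound for one component} (in the regime $n\le 2M$), together with a union bound over the $O(\varepsilon_2^{-2})$ pairs of these components, shows they all join into a single component with failure probability far smaller than the other error terms. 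You need this additional merging argument (and to fold its failure probability into the $4\log_2 n$ union bound) to reach the stated conclusion.
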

\begin{proof}
	Let $S_k$ denote the set of non-edges of $V_{2,k}$. We have $\mathbb{E}\vert S_k\vert =(1-p) \binom{\frac{\varepsilon_1}{\log n}n}{2}= (1-o(1)) \frac{\varepsilon_1^2}{(\log n)^2}n^2$ By a standard Chernoff bound,
	\begin{align*}
	\mathbb{P}\left(\vert S_k\vert \leq \frac{\varepsilon_1n^2}{4(\log n)^2}\right)  &\leq \exp\left(-\frac{(\varepsilon_1)^2}{16}\frac{n^2}{(\log n)^2}\right) \end{align*}
	If $\vert S_k\vert \geq \frac{\varepsilon_1n^2}{(\log n)^2}$ holds, we can apply Lemma~\ref{lemma: connecting}, concluding that every component of size $M_{k-1}$ is joined with at least one other, resulting in a component of size $M_k=2M_{k-1}$. Thus the desired conclusion for the first part of the lemma holds with probability at least 
	\[  1- \exp\left(-\frac{(\varepsilon_1)^2}{16}\frac{n^2}{(\log n)^2}\right) -\exp \left( -\frac{\varepsilon_2(\varepsilon_1)^2}{2^{16}}(\log n)^2 \right)\geq 1- 2 \exp \left( -\frac{\varepsilon_2(\varepsilon_1)^2}{2^{16}}(\log n)^2 \right),\]
	as desired.

	For the ``in particular'' part,  we first apply a simple union bound to the first $2\log_2 n -4\log_2\log n$ steps of the process, to show that with probability at least 
	\begin{align}\label{inequality: bound on probability of having found large square components}
	1- 2(2\log_2 n -4\log_2\log n )\exp \left( -\frac{\varepsilon_2(\varepsilon_1)^2}{2^{16}}(\log n)^2 \right)
	\end{align}
	our sprinkling process has uncovered a collection of square-components, each of which contains at least $\frac{\varepsilon_2n^2}{2}$ non-edges, and, whose union contains at least $\varepsilon_2n^2$ non-edges and includes all non-edges of $\Gamma[V_1]$ coming from components of $\square(\Gamma[V_1])$ of size at least $(\log n)^4$.  There can be at most $\frac{1}{2}(\varepsilon_2)^{-1}$ such components. By \eqref{inequality: Janson bound for one component}, the probability that a fixed pair of such components fails to join up in the next round of sprinkling is at most
	\[\exp\left(-\frac{(\varepsilon_2)^2\varepsilon_1}{2^{16}}\frac{p^{-1}n}{\log n}\right)\leq  \exp\left(-\frac{(\varepsilon_2)^2\varepsilon_1}{2^{16}}\frac{5n^{\frac{3}{2}}}{(\log n)^\frac{3}{2}} \right). \]
	Taking the union bound over the at most
	$\frac{1}{8}(\varepsilon_2)^{-2}$ pairs of components, we have that
	the probability any pair of these components fail to join up is, for
	large $n$, a lot less than the last term in 
	equation~\eqref{inequality: bound on probability of having found 
		large square components}: 
	\[8\log_2\log n \exp \left( -\frac{\varepsilon_2(\varepsilon_1)^2}{2^{16}}(\log n)^2 \right).\]
	Combining this with \eqref{inequality: bound on probability of having found large square components}, we get the claimed bound on the probability of having discovered a giant square-component containing at least $\varepsilon_2n^2$ non-edges and all non-edges of $\Gamma[V_1]$ contained in components of $\square(\Gamma[V_1])$ of size at least $(\log n)^4$. 
\end{proof}

\subsection{Covering the whole world} 
All that now remains to complete the proof of 
Theorem~\ref{thm:1giant} is to show that a.a.s.\ there is a 
square-component covering all vertices of $\square(\Gamma)$. Note 
that while in Lemma~\ref{lemma: sprinkling} we showed that 
$\square(\Gamma)$ has a giant component, we have not quite shown it 
is unique: in principle, one could stitch together a rival giant 
component at the last stage of sprinkling by building numerous 
bridges between small components. This is of course highly unlikely, 
and one could show uniqueness of the giant by exploiting the fact 
that the number of non-edges of $\Gamma$ lying in square-components of order at least $(\log n)^4$ is a.a.s.\ concentrated around its expectation (as shown in Corollary~\ref{corollary: lots of non-edges in large-ish components (1-eps_1) version}). However we do not have a nice form for this expectation, so a little care would be needed to show it changes continuously with $n$ to make the argument above fully rigorous. As this paper is already sufficiently long and as the uniqueness of the giant is not our main concern here, we eschew this and focus instead on the problem of ensuring we have a giant component whose support covers all the vertices. We sidestep the issue of the uniqueness of the giant by considering a partition of $[n]$ which allows us to both build a preferred giant and, crucially, to ensure this preferred giant has full support. We begin by establishing a useful corollary of the work in the previous subsections. 

\begin{cor}\label{corollary: existence of square-giants}
	Let $\lambda>\lambda_c$ be fixed, and let $p=p(n)$ be an 
	edge-probability satisfying $\lambda n^{-1/2}\leq p\leq 5 
	n^{-1/2} \sqrt{\log n}$. Let $\Gamma\in \G(n,p)$. Then there for every	$\varepsilon_3>0$ sufficiently small, there exists a constant $\varepsilon_4>0$ such that given fixed sets $U\subseteq U'\subseteq [n]$ with $\vert U\vert =\lfloor (1-2\varepsilon_3)n\rfloor$, $\vert U'\vert  = \lfloor(1-\varepsilon_3)n\rfloor$ all of the following hold with probability $1-O(n^{-1})$:
	\begin{enumerate}[label=(\roman*)]
		\item\label{cor:i} there are at least $\varepsilon_4n^2$ non-edges in $\Gamma[U]$ contained in square-components of $\square(\Gamma[U])$ of order at least $(\log n)^4$;
		\item\label{cor:ii} there is a unique square-component in $\square(\Gamma[U'])$ containing all non-edges in $\Gamma[U]$ contained in square-components of $\square(\Gamma[U])$ of order at least $(\log n)^4$;
		\item\label{cor:iii} there is a unique square-component in $\square(\Gamma)$ containing all non-edges contained in square-components of $\square(\Gamma[U])$  or $\square(\Gamma[U'])$ of order at least $(\log n)^4$.\qed
		\end{enumerate}	
\end{cor}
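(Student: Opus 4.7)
The plan is to bootstrap Corollary~\ref{corollary: lots of non-edges in large-ish components (1-eps_1) version} and Lemma~\ref{lemma: sprinkling} in two successive rounds of vertex-sprinkling along the nested chain $U \subseteq U' \subseteq [n]$, tuning $\varepsilon_3$ small enough that each application falls within the regime required.

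For \ref{cor:i}, I would apply Corollary~\ref{corollary: lots of non-edges in large-ish components (1-eps_1) version} to the induced random graph $\Gamma[U]\in\G(\lfloor(1-2\varepsilon_3)n\rfloor, p)$. Picking $\varepsilon_3$ small enough guarantees $p(n) \ge \lambda'|U|^{-1/2}$ for some $\lambda'\in(\lambda_c,\lambda)$, so the corollary produces a constant $\varepsilon_4>0$ such that with probability $1-O(n^{-1})$ at least $\varepsilon_4 n^2$ non-edges of $\Gamma[U]$ lie in components of $\square(\Gamma[U])$ of order at least $(\log n)^4$.

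For \ref{cor:ii}, I would run the vertex-sprinkling machinery of Section~\ref{section: vertex sprinkling} with $V_1 := U$ and $V_2 := U'\setminus U$, partitioning $V_2$ into $2\log_2 n$ blocks of size $\varepsilon_3 n/(2\log_2 n)$. Conditioning on the good configuration from \ref{cor:i} and applying Lemma~\ref{lemma: sprinkling} (with the roles of its $\varepsilon_1, \varepsilon_2$ played by $\varepsilon_3$ and $\varepsilon_4$), we obtain, with probability $1-o(n^{-1})$, a square-component $C_1$ of $\square(\Gamma[U'])$ into which all non-edges of $\Gamma[U]$ lying in large components of $\square(\Gamma[U])$ have been merged. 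Uniqueness is immediate: any two square-components of $\square(\Gamma[U'])$ both containing such non-edges must coincide.

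For \ref{cor:iii}, I would perform a second round of the same argument, this time sprinkling $[n]\setminus U'$ onto $\Gamma[U']$. First invoke \ref{cor:i} with $U'$ in place of $U$ (the hypotheses remain valid on the same event, after possibly further shrinking $\varepsilon_3$) to guarantee that $\Omega(n^2)$ non-edges of $\Gamma[U']$ lie in large components of $\square(\Gamma[U'])$. Applying Lemma~\ref{lemma: sprinkling} once more, I would merge all such components into a single square-component $C^\star$ of $\square(\Gamma)$. Since the square-component $C_1$ produced in \ref{cor:ii} already contains at least $\varepsilon_4 n^2 \gg (\log n)^4$ non-edges, $C_1$ itself qualifies as a large component of $\square(\Gamma[U'])$ and is therefore swept into $C^\star$; hence $C^\star$ contains all non-edges lying in large components of either $\square(\Gamma[U])$ or $\square(\Gamma[U'])$. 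A union bound over the $O(1)$ high-probability events keeps the total failure probability at $O(n^{-1})$.

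The main obstacle I foresee is keeping the parameter cascade under control: each successive application of Corollary~\ref{corollary: lots of non-edges in large-ish components (1-eps_1) version} and Lemma~\ref{lemma: sprinkling} imposes lower bounds on the relevant $\varepsilon$-parameters, and one must check that a single choice of $\varepsilon_3$ (sufficiently small, depending only on $\lambda$ and $\lambda_c$) makes all of them admissible simultaneously. Once this is arranged, no calculation is genuinely new --- the two sprinkling rounds are structurally identical, and the uniqueness assertions in \ref{cor:ii} and \ref{cor:iii} are set-theoretic consequences of ``being in the same component''.
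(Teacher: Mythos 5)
Your proposal is correct and follows essentially the same route as the paper, whose proof is simply one application of Corollary~\ref{corollary: lots of non-edges in large-ish components (1-eps_1) version} inside $U$ for part (i) and two applications of Lemma~\ref{lemma: sprinkling} for parts (ii) and (iii); your write-up just makes the two sprinkling rounds and the set-theoretic uniqueness argument explicit.
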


\begin{proof} 
	The corollary is immediate for sufficiently small $\varepsilon_3>0$ from an application of Corollary~\ref{corollary: lots of non-edges in large-ish components (1-eps_1) version} inside $U$ (for part (i)) and two applications of Lemma~\ref{lemma: sprinkling} (for parts (ii) and (iii) respectively), together with a suitable choice of the constant $\varepsilon_4$.
\end{proof}
\noindent We now apply this Corollary to prove Theorem~\ref{thm:1giant}.

\begin{proof}[Proof of Theorem~\ref{thm:1giant}]
	
First note that if $f(n)$ is any function with $f(n)=o(1)$ and $f(n)=\Omega(n^{-2})$,  and $5n^{-1/2}\sqrt{\log{n}}\le p(n)\le 1-f(n)$, then $\Gamma\in\G(n,p)$ has the $\CFS$ property, by \cite[Theorem 5.1]{BehrstockFalgasRavryHagenSusse:StructureRandom}.  Now assume that $\lambda>\lambda_c$ and $\lambda n^{-1/2}\le p(n)\le 5n^{-1/2}\sqrt{\log{n}}$. Pick $\varepsilon_3>0$ sufficiently small, and let $\varepsilon_4>0$ be the constant whose existence is guaranteed by Corollary~\ref{corollary: existence of square-giants}. Partition $[n]$ into $K=\lceil 2(\varepsilon_3)^{-1}\rceil $ sets 
\[U_i= \{x\in [n]: \     (i-1)\varepsilon_3n<x\leq i \varepsilon_3 n\}.\]
For each pair $(i,j)$ of distinct elements of $[K]$, we apply  Corollary~\ref{corollary: existence of square-giants}  to the sets $U=[n]\setminus (U_i\cup U_j)$ and $U'=[n]\setminus U_i$; taking a union bound over all such pairs $(i,j)$, we  see that with probability $1-O(K^2n^{-1})=1-O(n^{-1})$, for every pair of distinct elements $i,j\in [K]$ the following hold:
\begin{enumerate}[label=\textbf{(\arabic*)}]
	\item at least $\varepsilon_4n^2$ non-edges in $\Gamma[[n]\setminus (U_i\cup U_j)]$ are contained in components of $\square(\Gamma[[n]\setminus (U_i\cup U_j)])$ of order at least $(\log n)^4$;
	\item there is a unique component $C'_{ij}$ of $\square(\Gamma[[n]\setminus U_i])$ containing all non-edges of $\Gamma[[n]\setminus(U_i\cup U_j)]$ contained in components of $\square(\Gamma[[n]\setminus (U_i\cup U_j)])$ of order at least $(\log n)^4$;
	\item there is a unique component $C_{i}$ of $\square(\Gamma)$ containing $C'_{ij}$ as well as all non-edges of $\Gamma[[n]\setminus U_i]$ contained in components of $\square(\Gamma[[n]\setminus U_i])$ of order at least $(\log n)^4$.
\end{enumerate} 
We claim that $\forall i,j \in [K]$ we have $C_i=C_j$. Indeed for $i\neq j$, note that $C_i \supseteq C'_{ij}$ and $C_j\supseteq C'_{ji}$. Since both $C'_{ij}$ and $C'_{ji}$ contain all of the at least $\varepsilon_4n^2$ non-edges contained in components of $\square(\Gamma[[n]\setminus (U_i\cup U_j)])$ of order at least $(\log n)^4$, it follows that $C_i \cap C_j \supseteq C'_{ij}\supseteq C'_{ji}\neq \emptyset$. Since their intersection is non-empty, $C_i$ and $C_j$ are the same component of $\square(\Gamma)$, as claimed. We may thus let $C_{\star}$ denote the a.a.s.\ unique square-component with $C_{\star}=C_i$ for all $i\in [K]$.

We now show that a.a.s.\ the support of this component $C_{\star}$ is the whole vertex set $[n]$. Pick $i\in[K]$ and condition on the event that  there is a square-component $C_i'$ in $\square(\Gamma[[n]\setminus U_i])$ of order at least $\varepsilon_4n^2$ (an event which occurs with probability $1-O(n^{-1})$, as we saw in \textbf{(2)} above). If two or more such components exist, pick a largest one. Further, condition on each vertex $x\in U_i$ having at least $\frac{\varepsilon_3}{2} n$ non-neighbors in $\Gamma[U_i]$. By a standard application of Chernoff bounds and a union-bound, this event occurs with probability $1-O(n^{-1})$.

Having thus conditioned on the state of pairs in $\Gamma[U_i]$ and $\Gamma[[n]\setminus U_i]$, we now show that a.a.s.\ for every vertex $x \in U_i$, there exist $y\in U_i$ and $uv\in C_i'$ such that $xyuv$ induces a copy of $C_4$ --- so that that $xy$ belongs to the component $C'$ of $\square(\Gamma)$ containing $C_i'$.  Combining this with \textbf{(3)} above (which implie that a.a.s.\ $C'=C_{\star}$) and a simple union bound will then yield Theorem~\ref{thm:1giant}.

 Given non-edges $xy\in U_i^{(2)}\setminus E(\Gamma[U_i])$ and $uv\in C_i'$, let $X_{xy, uv}$ be the indicator function of the event that all of $ux$, $uy$, $vx$ and $vy$ are edges of $\Gamma$. Observe that this event is independent of our conditioning. For $x\in U_i$, set 
 \[X_x=\sum_{y\in U_i: \ xy \notin E[U_i]}\sum_{uv\in C_i'} X_{xy,uv}\]
 to be the number of induced $C_4$'s $xyuv$ of $\Gamma$  with $y\in U_i$ and $uv\in C_i'$.  We shall again apply the Extended Janson Inequality ((Proposition~\ref{proposition: Janson inequality}) to bound $\mathbb{P}(X_x=0)$.  Given our conditioning, the expectation of $X_x$ is
\[\mu:=\vert \{y\in U_i: \ xy \notin E(\Gamma)\}\vert \cdot \vert C_i'\vert p^4\geq\frac{ \varepsilon_3\varepsilon_4 (\lambda_c)^4}{2} n=\Omega(n).\]
We now compute the corresponding $\Delta$-parameter in Janson's inequality. For $y,y'\in U_i\setminus \Gamma_x[U_i]$ and $uv, u'v'\in C'_i$, write $\{xy, uv\}\sim\{xy',u'v'\}$ if $\{ux, uy, vx, vy\}\cap\{u'x,u'y',v'x,v'y\}\neq\emptyset$. Note that the random variables $X_{xy,uv}$ and $X_{xy', u'v'}$ are independent unless $\{xy, uv\}\sim\{xy',u'v'\}$, and further that $\{xy, uv\}\sim\{xy',u'v'\}$ implies $\{u,v\}\cap \{u',v'\}\neq \emptyset$.

Pick and fix $y\in U_i\setminus \Gamma_x[U_i]$ and $uv\in C'_i$, which can be done in at most $\vert U_i\vert \cdot \vert C_i'\vert $ ways.  We perform a case analysis to determine the contributions to the $\Delta$-parameter from terms of the form $\E X_{xy,uv}X_{xy',u'v'}$ with $\{xy, uv\}\sim\{xy',u'v'\}$:
\begin{itemize}
	\item there are at most $\vert U_i\vert $ choices of $y'\in U_i\setminus \Gamma_x[U_i]$ with $y'\neq y$ such that $\{xy, uv\}\sim\{xy',uv\}$, and for such $y'$ we have 
	$\E[X_{xy,uv}X_{xy',uv}]=p^6$;
	\item there are at most $n$ choices of $v'\in [n]\setminus U_i$ with $v'\neq v$ such that $\{xy, uv\}\sim\{xy,uv'\}$, and for such $v'$ we have 
	$\E[X_{xy,uv}X_{xy,uv'}]=p^6$  (with the contribution from the symmetric cases $\{xy, uv\}\sim \{xy,u'v\}$ analysed similarly);
	\item finally, there are at most $n\vert U_i\vert $ choices of $(v',y')$ with $v'\in [n]\setminus U_i$, $y'\in U_i\setminus \Gamma_x[U_i]$,   $v'\neq v$ and $y\neq y'$ such that $\{xy, uv\}\sim\{xy',uv'\}$, and for such $(v', y')$ we have 
	$\E[X_{xy,uv}X_{xy',uv'}]=p^7$ (with the contribution from the symmetric cases $\{xy, uv\}\sim \{xy',u'v\}$ analysed similarly).
\end{itemize}
Putting it all together, we see  
\begin{align*}
\Delta = \sum_{\{xy, uv\}\sim\{xy',u'v'\}} \E[X_{xy,uv}X_{xy',uv}] \leq  \vert U_i\vert \cdot \vert C_i'\vert \Bigl(\vert U_i\vert p^6 + 2np^6+ 2n\vert U_i\vert p^7  \Bigr).
\end{align*}	
Since $\vert U_i\vert \leq \varepsilon_3 n$,  $\vert C'_i\vert \leq n^2/2$ and $p\leq 5\sqrt{\frac{\log{n}}{n}}$, the above implies
\begin{align*}
\Delta&\le 5^7(\varepsilon_3)^2n\sqrt{n} (\log n)^{7/2} (1+o(1)),
\end{align*}
which for $n$ large enough is at most $\ds\frac{2^{20}\epsilon_3}{\epsilon_4 (\lambda_c)^4} \mu n^{1/2}(\log n)^{7/2} $.
Applying Janson's inequality,
\begin{align*}
\mathbb{P}(X_x=0)\leq \exp \left(- \frac{\mu^2}{2\Delta} \right)\leq \exp \left(- \frac{\epsilon_4\lambda_c^4\mu}{2^{20}(\log n)^{3/2}\sqrt{n}} \right)=o(n^{-1}).
\end{align*}
Thus with probability $1-o(n^{-1})$, $X_x>0$ and the vertex $x$ is
covered by the square-component in $\square(\Gamma)$ that contains $C_i'$. Taking a union bound over $x\in U_i$, with probability $1-o(1)$, every vertex $x\in U_i$ is
covered by this square-component (which as we showed is the a.a.s\  uniquely determined giant component $C_{\star}$).  Taking a union bound over $i\in [K]$ and combining this with \textbf{(1)--(3)} above, we see that with probability $1-o(1)$ the square-component $C_{\star}$ covers all of $[n]$. This concludes the proof of Theorem~\ref{thm:1giant}.
\end{proof}

\section{Concluding remarks}\label{section: conclusion}
There are several natural questions arising from our work.  To begin
with, one could ask for more information about the component structure
in $\square(\Gamma)$: in the subcritical regime, can one get a better
upper bound on the order of square-components?  In the supercritical
regime, can one give good bounds on the order of the second-largest
square-component?  In particular, can one give better bounds than just
$o(n^2)$, and can one show its support has size $o(n)$?  This may be
feasible albeit technically challenging.

Another question on the probabilistic side is determining the range of
$p=p(n)$ for which the square graph $\square(\Gamma)$ of $\Gamma\in \G(n,p)$ is a.a.s.\ connected, a very different question from the ones considered in this paper. Investigating other parameters such as the diameter of 
$\square(\Gamma)$ may also lead in interesting directions.

Further afield, one could consider percolation problems for similar structures. We could for example consider the graph on all triples of independent vertices in $\Gamma$ obtained by setting two such triples to be adjacent if they induce a copy of the $6$-cycle $C_6$ or of the complete balanced bipartite graph $K_{3,3}$ in $\Gamma$. We would guess the techniques in this paper would adapt well to the latter problem, as Bollob\'as and Riordan also suggest, but that new ideas would be needed for the former.

One could also consider a similar problem for a $p$-random
$r$-uniform hypergraph $\Gamma$, by setting a set of $r$ vertex-disjoint non-edges $F_1,F_2, \ldots, F_r \notin E(\Gamma)$ to be adjacent if all of the $r^2$ edges meeting each $F_i$ in exactly one vertex are present in $\Gamma$ (in other words, if and only if the union of the $F_i$s induces a copy of the complete balanced $r$-partite $r$-uniform hypergraph on $r^2$ vertices). Note the case $r=2$ corresponds exactly to square percolation. Levcovitz~\cite{Levcovitz:QIinvariantThickness} has provided a quasi-isometry invariant for right-angled Coxeter groups by associating a hypergraph to any such group, so analysis of  suitable variants of square percolation for hypergraphs may  yield interesting applications in geometric group theory (besides constituting a challenging and rather natural problem in combinatorial probability).

Finally it would be interesting to study other properties of the
right-angled Coxeter group $W_{\Gamma}$ when $\Gamma\in \G(n,p)$ using
tools from random graph theory.  In particular, determining the
threshold for algebraic thickness of every order, or the exact rate of
divergence of $W_{\Gamma}$ for all $p$ would be of great interest (see
\cite[Question~1]{BehrstockHagenSisto:coxeter}).  Doing so will
require new group theoretic ideas to translate these properties into
graph theoretic language, and the identification of suitably tractable
graph theoretic proxies for these in $\G(n,p)$.  Work of
Levcovitz~\cite{Levcovitz:QIinvariantThickness} provides promising
progress towards finding combinatorial 
properties to encode higher rates of 
polynomial divergence in right-angled Coxeter groups; indeed, as we finalized this paper, Levcovitz
released a new preprint \cite{Levcovitz:DivergenceRACG} that provides
such a translation, which we expect will be of use in future work on this
problem. As Levcovitz's work involves hypergraphs, developing new techniques for 
generalizations of square percolation to hypergraphs will likely be key to further progress.

Finally, one could study thickness and relative hyperbolicity in
random right-angled Coxeter groups with presentation graphs drawn from
other distributions than the Erd{\H o}s--R\'enyi random graph model,
such as random regular graphs. We do not know of any work which has 
been done in this direction at the present time.

\bibliographystyle{acm}
\bibliography{thick_cox}

\end{document}